\newcommand{\argmin}{\operatornamewithlimits{argmin}}
\newcommand{\ie}{\textit{i}.\textit{e}., }
\newcommand{\eg}{\textit{e}.\textit{g}. }
\newcommand{\etal}{\textit{et al}. }
\DeclarePairedDelimiter{\norm}{\lVert}{\rVert}
\DeclarePairedDelimiter{\ceil}{\lceil}{\rceil}
\newcommand{\fold}{f_{old}}
\newcommand{\fnew}{f_{new}}
\newcommand{\gammamin}{\gamma_\text{min}}
\newcommand{\gammamax}{\gamma_\text{max}}
\newcommand{\tmin}{t_\text{min}}
\newcommand{\tmax}{t_\text{max}}
\newcommand{\lambdamax}{\lambda_\text{max}}
\newcommand{\tbb}{t_\text{BB}}
\DeclarePairedDelimiter{\absval}{\lvert}{\rvert}
\newcommand{\E}{\mathbb{E}}
\renewcommand{\R}{\mathbb{R}}
\theoremstyle{thmstyleone}%
\newtheorem{theorem}{Theorem}%  meant for continuous numbers
\newtheorem{lemma}{Lemma}
\theoremstyle{thmstyletwo}%
\newtheorem{corollary}{Corollary}%
\theoremstyle{thmstylethree}%
\newtheorem{definition}{Definition}%
\newtheorem{invariant}{Invariant}%
\begin{document}

\title[Approximately Exact Line Search]{Approximately Exact Line Search}

%%=============================================================%%
%% Prefix	-> \pfx{Dr}
%% GivenName	-> \fnm{Joergen W.}
%% Particle	-> \spfx{van der} -> surname prefix
%% FamilyName	-> \sur{Ploeg}
%% Suffix	-> \sfx{IV}
%% NatureName	-> \tanm{Poet Laureate} -> Title after name
%% Degrees	-> \dgr{MSc, PhD}
%% \author*[1,2]{\pfx{Dr} \fnm{Joergen W.} \spfx{van der} \sur{Ploeg} \sfx{IV} \tanm{Poet Laureate} 
%%                 \dgr{MSc, PhD}}\email{iauthor@gmail.com}
%%=============================================================%%

\author*[1]{\fnm{Sara} \sur{Fridovich-Keil}}\email{sfk@berkeley.edu}

\author[1]{\fnm{Benjamin} \sur{Recht}}\email{brecht@berkeley.edu}

\affil[1]{\orgdiv{Electrical Engineering and Computer Sciences}, \orgname{UC Berkeley}}

\abstract{We propose \emph{approximately exact line search} (AELS), which uses only function evaluations to select a step size within a constant fraction of the exact line search minimizer of a unimodal objective. We bound the number of iterations and function evaluations of AELS, showing linear convergence on smooth, strongly convex objectives with no dependence on the initial step size for three descent methods: steepest descent using the true gradient, approximate steepest descent using a gradient approximation, and random direction search. We demonstrate experimental speedup compared to Armijo line searches and other baselines on weakly regularized logistic regression for both gradient descent and minibatch stochastic gradient descent and on a benchmark set of derivative-free optimization objectives using quasi-Newton search directions. We also analyze a simple line search for the strong Wolfe conditions, finding upper bounds on iteration and function evaluation complexity similar to AELS. Experimentally we find AELS is much faster on deterministic and stochastic minibatch logistic regression, whereas Wolfe line search is slightly faster on the DFO benchmark. Our findings suggest that line searches and AELS in particular may be more useful in the stochastic optimization regime than commonly believed.
}

\keywords{line search methods, derivative free optimization, convex optimization, Wolfe conditions}

%%\pacs[JEL Classification]{D8, H51}

%%\pacs[MSC Classification]{35A01, 65L10, 65L12, 65L20, 65L70}

\maketitle

\section{Introduction}
\label{sec:intro}

Consider the unconstrained optimization problem:
\begin{align}
\argmin_{x \in \R^n} f(x)
\end{align}
where $f$ may be known analytically, derived from a large dataset, and/or available only as a black box. In our analysis, we assume that $f$ is strongly convex and has Lipschitz gradient, but our experiments include more general objectives. Algorithm~\ref{alg:gradient descent} presents a prototypical approach in our quest to minimize the objective $f$. 

\noindent
\begin{algorithm2e}[h]
\DontPrintSemicolon
\KwIn{$f, x_0, K, T_0 > 0, \beta \in (0,1)$}
$t_{-1} \gets T_0$\;
\For{$k = 0$, $k < K$, $k = k + 1$}{
$d_k \gets \text{SearchDirection}(f, x_k)$\;
$t_k \gets \text{LineSearch}(f, x_k, \beta^{-1}t_{k-1}, \beta, d_k)$\;
$x_{k+1} \gets x_k + t_k d_k$\;
}
\Return{$x_K$}
\caption{Descent Method}
\label{alg:gradient descent}
\end{algorithm2e}

In each step $k$ with iterate $x_k$, we choose a search direction $d_k$ (by convention a \emph{descent} direction)
and a step size $t_k > 0$, and update $x_k$ accordingly. Our contribution is a novel method for the LineSearch subroutine, \emph{approximately exact line search} (Algorithm~\ref{alg:approximately exact line search with warm start}, or AELS), an adaptive variant of golden section search \cite{avriel1968golden} that operates without a predefined search interval or required accuracy. As its name suggests, the core of our approach is to approximate the solution of an \emph{exact line search}, the step size that minimizes the one-dimensional slice of the objective along the search direction. Exact line search is a natural and effective heuristic for choosing the step size, but computing the minimization exactly can be too costly for a single step of Algorithm~\ref{alg:gradient descent}. AELS instead approximates this minimizer within a constant fraction, using only a logarithmic number of objective evaluations. Although many other approximate line searches exist, these generally either require additional gradient or directional derivative computations, which may be expensive or unavailable in some settings, or are inefficient, using more function evaluations or taking smaller steps than necessary.

Approximately exact line search is parameter-free, with nominal input parameters of adjustment factor $\beta$ and initial step size $T_0$ that have minimal or no influence on the iteration complexity, respectively. Although its performance bounds depend on the Lipschitz and strong convexity parameters of the objective function (and the quality of the gradient approximation as the descent direction), these parameters play no role in the algorithm itself. 
AELS requires only unimodality (not convexity) to find a good step size, and can adjust the step size as much as necessary in each step. 
It selects a step size using only function evaluations, so irregularities in the gradient have no direct impact on its execution and it applies naturally to derivative-free optimization (DFO). AELS is competitive with gradient-based line searches when accurate gradients are available, yet avoids the fragility of gradient-based line searches in settings where the gradient is noisy. In particular, we demonstrate the benefits of AELS in the more difficult line search cases of derivative-free optimization (where the gradient is often estimated by finite differencing) and minibatch stochastic gradient descent (where the gradient and function value are approximated using a subset of a large dataset at each step); in both of these cases the error in the gradient is typically larger than any error in the function evaluation.

We prove linear iteration complexity (of a flavor similar to \cite{nesterov2017random}) and logarithmic function evaluation complexity per iteration in three settings: available gradient, approximate gradient via finite differencing, and random direction search (see \cite{Stich_2013, stich2014convex}). 
AELS shows strong empirical performance on benchmark strongly convex tasks (regularized logistic regression) with full batch gradient descent (GD) and minibatch stochastic gradient descent (SGD) \cite{robbins1951stochastic}, converging faster than Armijo and Wolfe line searches and simple prespecified step size schedules. These performance gains persist over a wide range of minibatch sizes, including single-example minibatches, and a wide range of required accuracies, with performance degrading to the level of an adaptive Armijo line search as allowable relative error drops to $10^{-6}$.
AELS also shows competitive empirical performance using BFGS \cite{bfgs} search directions on benchmark nonconvex DFO objectives \cite{more2009benchmarking}, tending to converge faster than Nelder-Mead \cite{nelder1965simplex} (designed specifically for DFO) and BFGS with Armijo line searches, and similar to BFGS with a Wolfe line search \cite{wolfe1969convergence} (requires directional derivative estimation in the line search, but guarantees a step size for which the BFGS direction update is well-defined). In light of this empirical similarity, we also analyze the iteration and function evaluation complexity of Wolfe line search, and find it remarkably similar to AELS (assuming directional derivatives are approximated by two-point finite differencing).

\subsection{Outline}
We begin with a review of related work in Section~\ref{sec:related_work}. We then introduce approximately exact line search in Section~\ref{sec:algs}. Section~\ref{sec:preliminaries} reviews notation and useful preliminaries. Section~\ref{sec:theory} contains a summary of convergence rates for general Armijo line searches (Subsection~\ref{sec:general_armijo_results}), our bounds on the iteration and function evaluation complexity of approximately exact line search (Subsection~\ref{sec:approx_exact_results}), and a comparison to the iteration and function evaluation complexity of a strong Wolfe line search. These results are proven in Section~\ref{sec:proofs} and validated experimentally in Section~\ref{sec:experiments}. Our conclusions are presented in Section~\ref{sec:conclusions}.

\section{Related work}
\label{sec:related_work}

Line search is a well-studied problem, and many approaches have been proposed to address the basic goal of choosing a step size as well as various practical challenges like stochasticity and nonconvexity. In this section, we offer a survey of relevant literature, with a particular emphasis on work that inspired ours. 

\paragraph{Step size schedules}
The simplest step size is a constant, which will suffice for convergence provided that the step size is no more than $\frac{1}{L}$ for an objective with $L$-Lipschitz gradient \cite{recht-wright}. In practice, however, even for $L$-smooth objectives $L$ is rarely known, and an improperly chosen fixed step size can result in divergence or arbitrarily slow convergence. Another simple strategy is a decreasing step size, for instance $t_k \propto \frac{1}{k}$. This has the benefit of guaranteeing convergence, but is still susceptible to slow convergence if the initial step size is too small  \cite{Bengio_2012}. 
Other schedules include the Polyak step size \cite{Polyak87} and a hierarchical variant, which adaptively updates a lower bound on the objective to choose the step sizes without line search \cite{hazan2019revisiting}. The Barzilai-Borwein method \cite{bb} uses history of the prior iterate and gradient to adapt to local curvature and compute a step size so that the update approximates a Newton step, without using the Hessian.

\paragraph{Stochastic methods}
Along with these strategies, many heuristics and hand-tuned step size schedules exist, especially in the stochastic setting. Engineers may hand-tune a series of step sizes during the course of optimization \cite{Bengio_2012}. These strategies can be successful, but require substantial human effort which can be averted by a line search algorithm. Several variations of line search have been proposed (and proven) specifically for the stochastic setting; these methods typically adjust the step size by at most a constant factor in each step. Paquette and Scheinberg \cite{paquette2018stochastic} prove convergence of such a method, with careful tracking of expected function decrease in each step. Likewise, Berahas, Cao, and Scheinberg prove convergence of a slightly simpler line search \cite{berahas2019global} in the noisy setting, provided sufficient bounds on the noise. Another class of methods diverges from the classical framework of Algorithm~\ref{alg:gradient descent} by borrowing from online learning \cite{orabona2016coin, cutkosky2017online, Duchi:2011:ASM:1953048.2021068} to choose each iterate without line search, also with proven convergence in the stochastic setting \cite{orabona2017training}. Many methods for the stochastic setting also include variance reduction, such as adaptively choosing the minibatch size in each iteration \cite{Bollapragada_2018, bollapragada2018progressive, Friedlander_2012} or accumulating information from prior gradients \cite{schmidt2017minimizing, schaul2012pesky, almeida_langlois_amaral_plakhov_1999}. In the case of interpolating classifiers, Vaswani \etal \cite{vaswani2019painless} prove that backtracking line search with a periodic step size reset converges at the same rate in stochastic (minibatch) and deterministic (full batch) gradient settings. 

Line search methods have also been studied in the DFO setting; see \cite{DFObook} for an overview. Jamieson, Nowak, and Recht \cite{jamieson2012query} offer lower bounds on the number of (potentially noisy) function evaluations, Nesterov and Spokoiny \cite{nesterov2017random} and Golovin \etal \cite{golovin2020gradientless} analyze methods with random search directions, and Berahas \etal \cite{berahas2019theoretical} compare various gradient approximation methods. 

\paragraph{Armijo condition methods}
Perhaps the most common line search is Armijo backtracking, which begins with a fixed initial step size and geometrically decreases it until the Armijo condition is satisfied \cite{recht-wright, nocedal2006line, nesterov2013gradient}. 

\begin{definition}
\label{armijo}
Armijo condition \cite{armijo1966minimization}: 
If our line search direction is $d$ (such that $d^T\nabla f(x) < 0$) from iterate $x$, then the Armijo condition is
\begin{align}
f(x + t d) \leq f(x) + c_1 t d^T\nabla f(x)\;,
\end{align}
where $c_1 \in (0,1)$. In our analysis we use $c_1 = \frac{1}{2}$ for convenience; in experiments we use $c_1 = 10^{-4}$ (as recommended in \cite{nocedal2006line}).
In the case of steepest descent, we have $d = -\nabla f(x)$, with corresponding Armijo condition
\begin{align}
f(x - t \nabla f(x)) \leq f(x) - c_1 t \norm{\nabla f(x)}^2\;.
\end{align}
\end{definition}

On convex objective functions with $L$-Lipschitz gradients, such a strategy (taking Armijo steps in the negative gradient direction) guarantees convergence with a rate depending on the chosen initial step size, which is intended to be $\geq \frac{1}{L}$ \cite{recht-wright, nesterov2013gradient}. As we might expect, this implementation (denoted ``traditional backtracking'' in our experiments) performs well with a sufficiently large initial step size but can converge quite slowly if the initial step size is too small. More sophisticated Armijo line searches attempt to find a reasonably large step size that satisfies Definition~\ref{armijo}. One option is to replace the fixed initial step size with a value slightly larger than the previous step size \cite{nesterov2013gradient} (denoted ``adaptive backtracking'' in our experiments), which like many of the stochastic line searches increases the step size by at most a constant factor in each step. Another is the polynomial interpolation method of Nocedal and Wright, which seeks to minimize a local approximation of the objective while satisfying Definition~\ref{armijo} \cite{nocedal2006line}. Although the Armijo condition requires no additional gradient evaluations, it does rely on the gradient at the iterate, making it more sensitive (than approximately exact line search, which uses no gradient information) to noisy gradients like those available in stochastic or derivative-free optimization.

\paragraph{Curvature condition methods}
In addition to the Armijo condition, which ensures that the step size is not too large, many line searches include curvature conditions, such as the Wolfe or Goldstein conditions \cite{wolfe1969convergence, nocedal2006line}, which aim to ensure that the step size is also not too small. Although this largely remedies the small step size failure mode of pure Armijo backtracking methods, it comes at a cost. The Wolfe curvature condition requires gradient or directional derivative measurements during the line search, which may be expensive, unavailable, or noisy in certain situations (\eg stochastic optimization or DFO). The Goldstein condition closely parallels the Armijo condition and requires no extra gradient evaluations, but depending on the objective it may exclude the exact line search minimizer \cite{nocedal2006line}. 

\paragraph{Bracketing methods}
If the exact line search step size is known to lie within a certain interval, intermediate function evaluations can reduce the search interval geometrically until a desired accuracy is achieved \cite{brent1971algorithm, avriel1968golden}, and this search interval can be found adaptively (see \eg Algorithms 3.5 and 3.6 in \cite{nocedal2006line}, although these use gradient evaluations). Our proposed approximately exact line search (Algorithm~\ref{alg:approximately exact line search with warm start}) is particularly inspired by golden section search \cite{avriel1968golden}, which minimizes any unimodal 1D function in logarithmic time (as a function of the initial search interval and the desired precision). Golden section search proceeds in a manner much like binary search, reducing the size of the active search window by a constant fraction with each additional function evaluation, and uses no gradient evaluations. AELS (Algorithm~\ref{alg:approximately exact line search with warm start}) relies on the same logic to bracket the minimum with only function evaluations, but operates without a known search interval or fixed precision. Instead, we use a geometrically-increasing sequence of trial step sizes to determine the search interval, and search to a relative precision that guarantees a step size within a constant fraction of the true minimizing step size. This relative precision allows AELS to smoothly trade off the function complexity of the line search and the precision of the selected step size: when we are close to the minimum, our line search has high precision, whereas farther away we accept lower precision in favor of proceeding to the next step.

\section{Proposed algorithm}
\label{sec:algs}

The goal of approximately exact line search (Algorithm~\ref{alg:approximately exact line search with warm start}), as its name suggests, is to find a step size within a constant fraction of (and without exceeding) the exact line search minimizer. 
AELS achieves this goal by iteratively increasing and/or decreasing the step size by a fixed multiplicative factor as long as doing so decreases the line search objective. 

Our procedure follows the same logic as golden section search \cite{avriel1968golden}: a set of three sample step sizes contains the exact line search minimizer of a unimodal 1D objective as long as, of our three samples, the one in the middle has the lowest objective value. Accordingly, we evaluate the objective at an exponentially spaced set of step sizes, stopping when we know the exact minimizer lies in the interior of our sample step sizes. Among the three sample step sizes that most closely bracket the minimizer, we use the smallest to ensure that we never exceed the true minimizer. A variant of our algorithm instead chooses the sampled step size with the lowest objective value; this variation performs indistinguishably from AELS both in theory and in our experiments, so we exclude it for clarity.

Using only function evaluations to choose the step size, rather than relying on the Armijo condition, requires a small overhead in function evaluations per step but enables greater flexibility in the objective functions we can optimize efficiently (by removing any gradient requirements). We also note that the ``warm start'' procedure of initializing the step size slightly larger than the previous step size used ($T = \beta^{-1}t$) is beneficial experimentally, although an alternative approach of initializing with a step size equal to the previous step size ($T = t$) achieves the same worst-case bounds in terms of steps and function evaluations. Since AELS chooses a step size slightly smaller than an exact line search (rather than slightly larger), this larger initialization may on average better approximate the desired exact line search minimizer in practice.

A {\tt python} implementation of AELS, including various search directions, special cases to optimize nonconvex objectives (like those in our DFO experiments), and adaptations for finite precision function evaluations (which may be indistinguishable in a wide, flat minimum), is provided at \url{https://github.com/modestyachts/AELS}. 

\noindent
\begin{algorithm2e}[t]
\DontPrintSemicolon
\SetNoFillComment
\KwIn{$f, x, T > 0, \beta \in (0,1), d$}
$t \gets T$\;
$\fold \gets f(x)$\;
$\fnew \gets f(x + t d)$\;
\tcc{Decide to try increasing or decreasing the step size}
$\alpha \gets \beta$\;

\If{$\fnew \leq \fold$}{
$\alpha \gets \beta^{-1}$\;
}
\tcc{Increase/Decrease the step size as long as $f$ is decreasing}
\Repeat{$\fnew \geq \fold$}{ 
$t \gets \alpha t$\;
$\fold \gets \fnew$\;
$\fnew \gets f(x + td)$\;
} 
\tcc{If increasing the step size failed, try decreasing it}
\If{$t = T/\beta$}{
$t \gets T$\;
$\alpha \gets \beta$\;
$(\fnew, \fold) \gets (\fold, \fnew)$\;
\Repeat{$\fnew > \fold$}{
$t \gets \alpha t$\;
$\fold \gets \fnew$\;
$\fnew \gets f(x + td)$\;
}
}
\tcc{Return a step size $\leq$ the exact line search minimizer}
\If{$\alpha < 1$}{
\Return{$t$}\;
}
\Return{$\beta^2 t$}\;
\caption{Approximately Exact Line Search}
\label{alg:approximately exact line search with warm start}
\end{algorithm2e}

\begin{figure}[h]
\centering
\begin{subfigure}[h]{0.325\textwidth}
        \includegraphics[width=\textwidth]{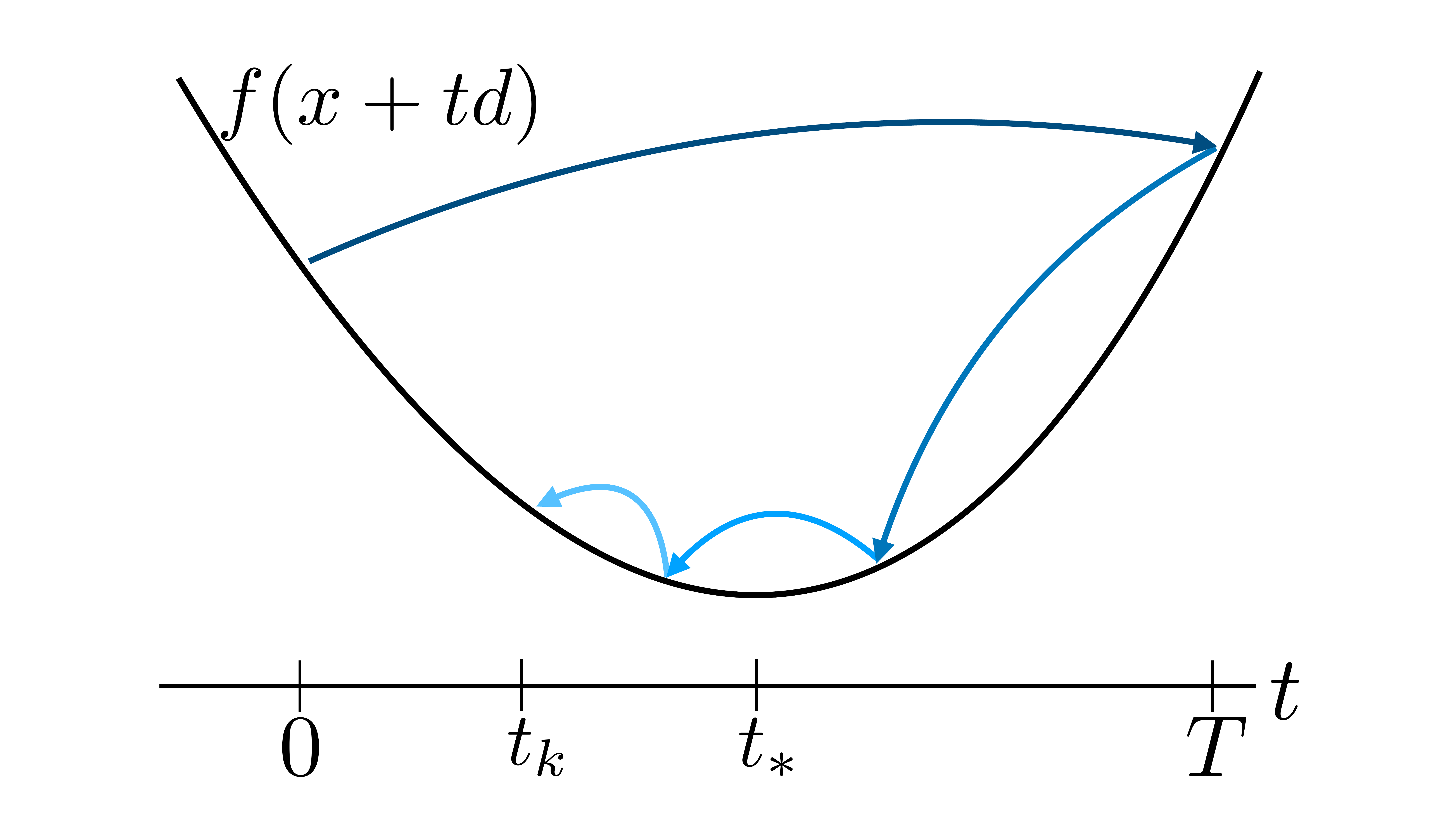}
        \caption{$T$ too large}
        \label{fig:big}
    \end{subfigure}
    \begin{subfigure}[h]{0.325\textwidth}
        \includegraphics[width=\textwidth]{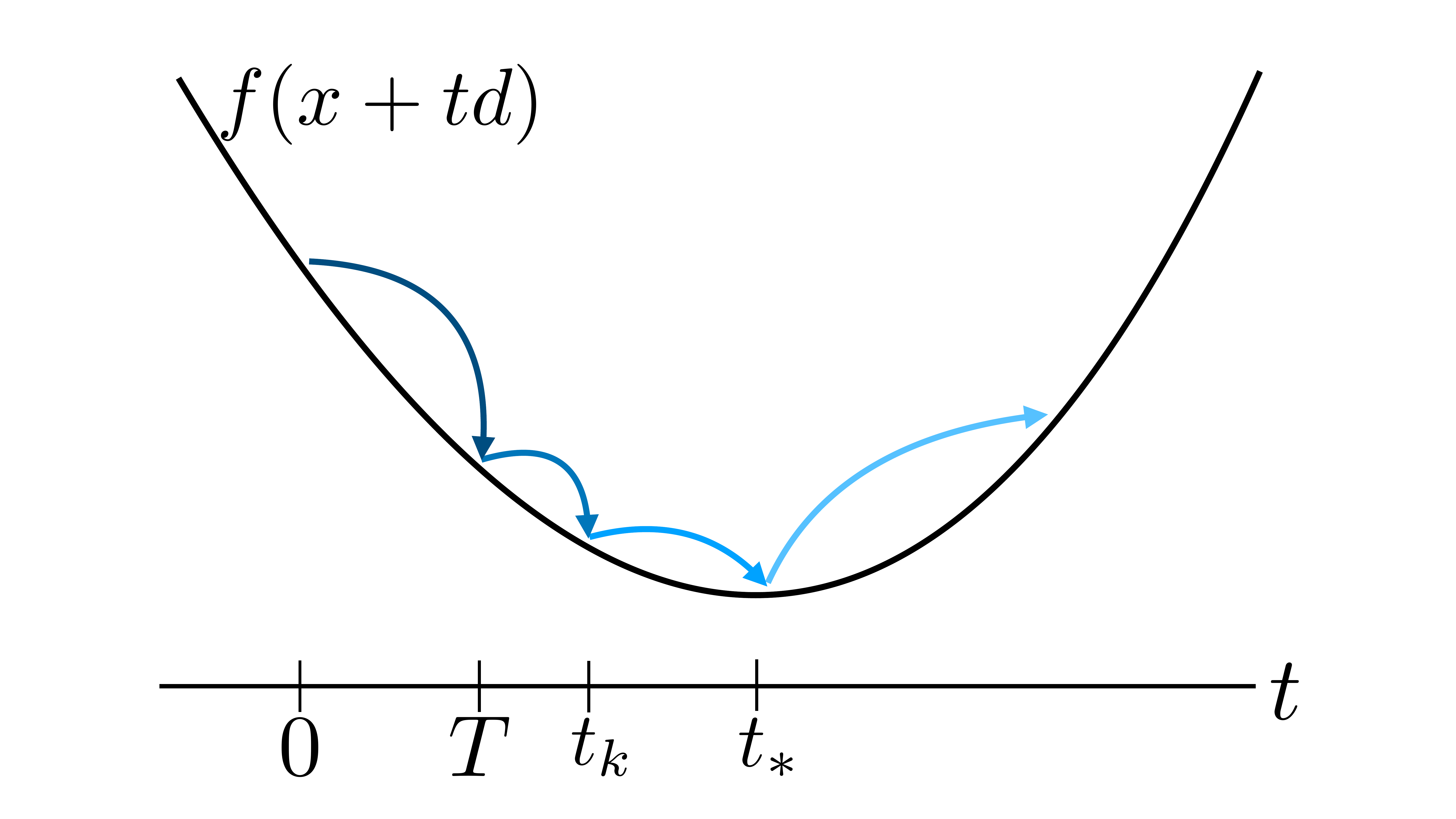}
        \caption{$T$ too small}
        \label{fig:small}
    \end{subfigure}
    \begin{subfigure}[h]{0.325\textwidth}
        \includegraphics[width=\textwidth]{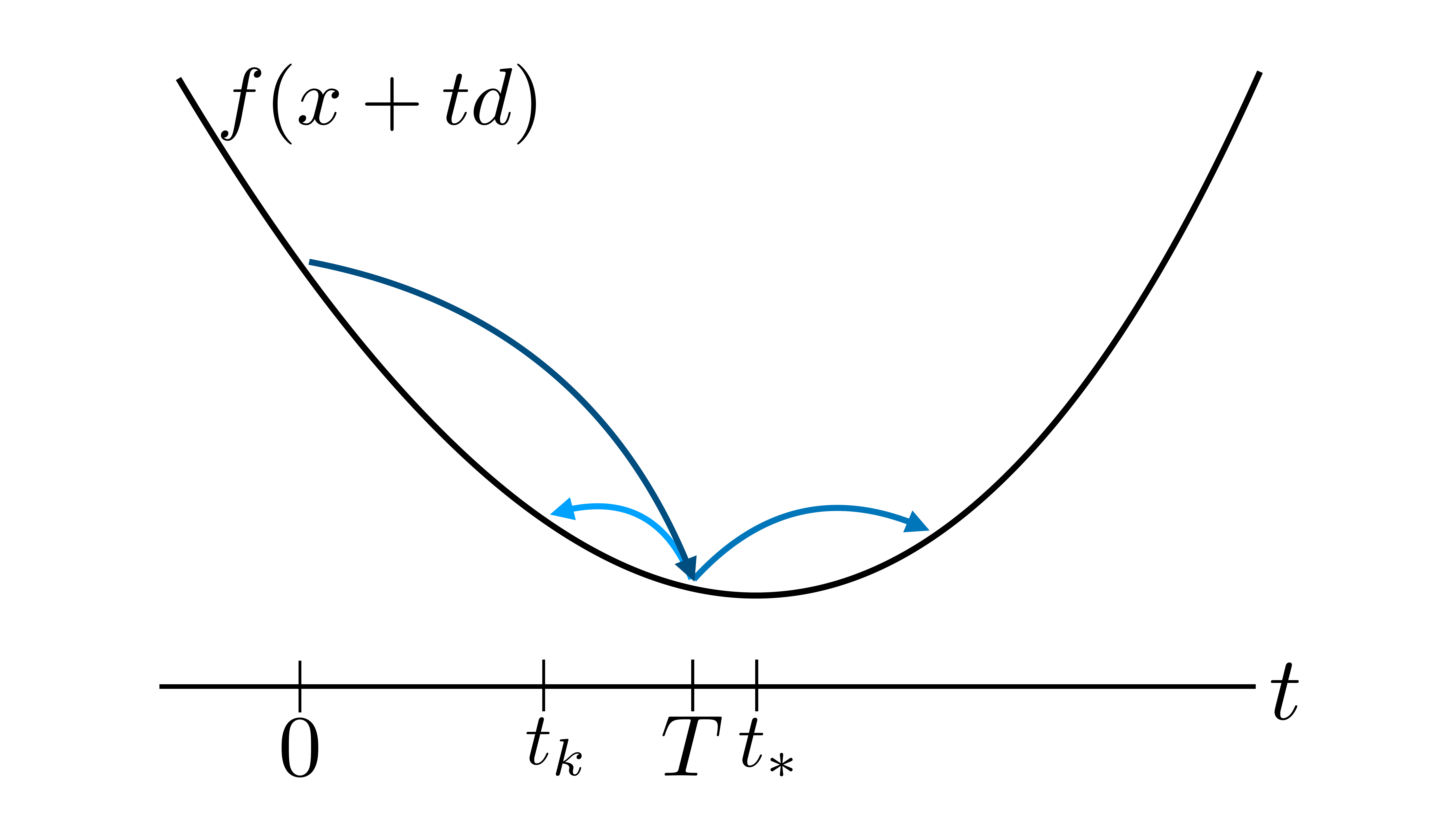}
        \caption{$T$ near $t_*$}
        \label{fig:medium}
    \end{subfigure}
\caption{Illustration of Algorithm~\ref{alg:approximately exact line search with warm start} in three characteristic scenarios defined by the relationship between $T$ and $t_*$, with the chosen step size denoted $t_k$. Arrows indicate trials of candidate step sizes, with each candidate step shown in a lighter shade than the previous. In this illustration and in our experiments, we use $\beta = 2/(1+\sqrt{5})$, the inverse golden ratio.}
\label{fig:alg_execution_illustration}
\end{figure}

Figure~\ref{fig:alg_execution_illustration} illustrates the three regimes of AELS. If $T$ is sufficiently larger than the exact line search minimizer $t_*$ (Figure~\ref{fig:big}), our first candidate step will immediately increase the objective value, so we iteratively decrease the step size until we are guaranteed that our step size $t_k$ does not exceed $t_*$. If $T$ is sufficiently smaller than $t_*$ (Figure~\ref{fig:small}), we iteratively increase the candidate step size until it surpasses $t_*$, and then select the largest previous candidate step size $t_k$ that we can guarantee does not exceed $t_*$. If $T$ is sufficiently close to $t_*$ (Figure~\ref{fig:medium}), we must try both larger and smaller step sizes to find a $t_k$ we can guarantee is within a constant fraction of $t_*$.

\section{Preliminaries}
\label{sec:preliminaries}

We begin by compiling useful facts and notation that appear in our analysis.

\subsection{Notation}

We use the following notation:
\begin{itemize}
    \item $\norm{\cdot} \equiv \norm{\cdot}_2$, the Euclidean norm.
    \item $L$ denotes the Lipschitz constant of the gradient of $f$.
    \item $m$ denotes the strong convexity parameter of $f$.
    \item $f'_v(x) \equiv \frac{v^T\nabla f(x)}{\norm{v}}$, the directional derivative of $f$ at $x$ in direction $v$.
    \item $f_*$ denotes the minimum value of a (typically convex) function $f$. $x_*$ denotes an optimal input point, such that $f(x_*) = f_*$.
    \item $\Delta$ is the user-specified tolerance for $\norm{x-x_*}$ at termination. We assume that $\norm{x_k-x_*} \geq \Delta$ during algorithm execution.
\end{itemize}

\begin{definition}
\label{Lipschitz gradient}
A function $f$ has $L$-Lipschitz gradient if for all $x, y$:
\begin{align}
\norm{\nabla f(x) - \nabla f(y)} \leq L \norm{x - y}\;.
\end{align}
Equivalently (using Taylor's theorem), $f$ has $L$-Lipschitz gradient if for all $x, y$:
\begin{align}
f(y) \leq f(x) + \nabla f(x)^T(y-x) + \frac{L}{2}\norm{y-x}^2\;.
\end{align}
\end{definition}

\begin{definition}
\label{strong convexity}
A function $f$ is $m$-strongly convex if for all $x, y$:
\begin{align}
\norm{\nabla f(x) - \nabla f(y)} \geq m \norm{x - y}\;.
\end{align}
Equivalently (using Taylor's theorem), $f$ is $m$-strongly convex if for all $x, y$:
\begin{align}
f(y) \geq f(x) + \nabla f(x)^T(y-x) + \frac{m}{2}\norm{y-x}^2\;.
\end{align}
\end{definition}

\subsection{Gradient approximation}
For derivative-free optimization, we approximate the gradient or directional derivative using finite differences. In this setting, we can often guarantee performance that is nearly as fast as in the gradient setting, but with an extra error term determined by the quality of our gradient approximation. 

\begin{lemma}
\label{directional derivative approximation}
Let $v$ be any unit vector and $\sigma > 0$ be a sampling radius. Let $g_v^f(x) = \frac{f(x+\sigma v) - f(x)}{\sigma}$ be a forward finite differencing approximation of $f'_v(x)$ with radius $\sigma$. Then for all $x$:
\begin{align}
\absval{g_v^f(x) - f'_v(x)} \leq \frac{L\sigma}{2}\;.
\end{align}
\begin{proof}
See Theorem 2.1 in \cite{berahas2019theoretical}.
\end{proof}
\end{lemma}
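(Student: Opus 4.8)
The plan is to derive the bound directly from the $L$-Lipschitz gradient property of Definition~\ref{Lipschitz gradient}, rather than appealing to the external reference. Since $v$ is a unit vector we have $\norm{v} = 1$, so $f'_v(x) = v^T\nabla f(x)$ and the quantity to control is the (scaled) first-order Taylor remainder $\absval*{f(x+\sigma v) - f(x) - \sigma v^T\nabla f(x)}$; dividing this by $\sigma$ recovers $\absval*{g_v^f(x) - f'_v(x)}$.

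First I would apply the quadratic form of Definition~\ref{Lipschitz gradient} at the points $x$ and $y = x + \sigma v$. Because $\norm{\sigma v}^2 = \sigma^2$, this yields
\begin{align}
f(x+\sigma v) - f(x) - \sigma v^T\nabla f(x) \leq \frac{L\sigma^2}{2}\;,
\end{align}
and dividing by $\sigma > 0$ gives the one-sided estimate $g_v^f(x) - f'_v(x) \leq \frac{L\sigma}{2}$.

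The main step is the matching lower bound, since Definition~\ref{Lipschitz gradient} supplies only an upper quadratic bound. The cleanest route is to note that $-f$ also has $L$-Lipschitz gradient (gradient differences of $-f$ have the same norm as those of $f$), so applying the same quadratic inequality to $-f$ at $x$ and $x+\sigma v$ and negating gives $f(x+\sigma v) - f(x) - \sigma v^T\nabla f(x) \geq -\frac{L\sigma^2}{2}$, hence $g_v^f(x) - f'_v(x) \geq -\frac{L\sigma}{2}$ after dividing by $\sigma$. Combining the two one-sided bounds produces $\absval*{g_v^f(x) - f'_v(x)} \leq \frac{L\sigma}{2}$, as claimed.

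The only subtlety I would watch is precisely this conversion of a one-sided quadratic bound into a two-sided one; recognizing that the Lipschitz property is symmetric under $f \mapsto -f$ avoids any extra work. An equivalent alternative is to write the remainder as $\int_0^1 \left[\nabla f(x+t\sigma v) - \nabla f(x)\right]^T(\sigma v)\,dt$ and bound the integrand by Cauchy--Schwarz together with $\norm{\nabla f(x+t\sigma v) - \nabla f(x)} \leq Lt\sigma$, which integrates to the same $\frac{L\sigma^2}{2}$. Either route uses nothing beyond the smoothness assumption already in force.
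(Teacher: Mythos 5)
Your proof is correct, and it takes a genuinely different route from the paper: the paper offers no derivation at all, its proof being a pointer to Theorem 2.1 of \cite{berahas2019theoretical}, whereas you derive the bound directly from Definition~\ref{Lipschitz gradient}. Both of your one-sided estimates are sound. The upper bound is the standard descent-lemma consequence of an $L$-Lipschitz gradient, and your observation that $-f$ inherits the same Lipschitz constant (gradient differences only change sign under negation) legitimately converts it into the matching lower bound $f(x+\sigma v) - f(x) - \sigma v^T\nabla f(x) \geq -\frac{L\sigma^2}{2}$, so dividing by $\sigma$ and combining gives the claimed two-sided bound; your integral-remainder alternative is essentially the argument used inside the cited theorem. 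What your route buys is self-containedness: the lemma then rests only on machinery the paper already states, and it makes explicit that only smoothness (never convexity) is used. What the paper's citation buys is brevity and a connection to a more general result, since Theorem 2.1 of \cite{berahas2019theoretical} also handles noisy function evaluations, which is relevant to the derivative-free setting the paper targets, though that extra generality is not needed for the lemma as stated.
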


\begin{corollary}
\label{directional derivative corollary}
A forward finite differencing gradient approximation $g(x)$ with sampling radius $\sigma$ satisfies $\norm{g(x) - \nabla f(x)} \leq \frac{L\sigma \sqrt{n}}{2}$, where $n$ is the dimension of $x$.
\end{corollary}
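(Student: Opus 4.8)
The plan is to apply Lemma~\ref{directional derivative approximation} one coordinate at a time and then assemble the $n$ per-coordinate error bounds into a single bound on the Euclidean norm of the full error vector. First I would make explicit the construction of the forward finite differencing gradient approximation $g(x)$ implied by the corollary: its $i$-th entry is the scalar $g_{e_i}^f(x) = \frac{f(x+\sigma e_i) - f(x)}{\sigma}$, where $e_i$ is the $i$-th standard basis vector. Since each $e_i$ is a unit vector with $\norm{e_i} = 1$, the corresponding directional derivative is exactly the $i$-th partial derivative, $f'_{e_i}(x) = e_i^T\nabla f(x) = \frac{\partial f}{\partial x_i}(x)$, which is precisely the $i$-th entry of the true gradient $\nabla f(x)$.

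Next I would invoke Lemma~\ref{directional derivative approximation} with $v = e_i$ for each $i \in \{1, \dots, n\}$, which yields $\absval{g_{e_i}^f(x) - f'_{e_i}(x)} \leq \frac{L\sigma}{2}$. In other words, every component of the error vector $g(x) - \nabla f(x)$ is bounded in magnitude by $\frac{L\sigma}{2}$, uniformly across all $n$ coordinates. Finally I would combine these $n$ scalar bounds through the definition of the Euclidean norm, writing $\norm{g(x) - \nabla f(x)}^2 = \sum_{i=1}^n \left(g_{e_i}^f(x) - f'_{e_i}(x)\right)^2 \leq n\left(\frac{L\sigma}{2}\right)^2$, and taking square roots to recover the claimed bound $\norm{g(x) - \nabla f(x)} \leq \frac{L\sigma\sqrt{n}}{2}$.

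There is essentially no hard step here: the corollary is a direct aggregation of Lemma~\ref{directional derivative approximation} over the $n$ coordinate directions, and the factor $\sqrt{n}$ arises purely from the worst-case accumulation of $n$ identical per-coordinate errors under the $\ell_2$ norm. The only point requiring minor care is confirming that the gradient estimate is indeed built coordinate-wise, so that each entry is a genuine directional-derivative approximation along a unit direction and Lemma~\ref{directional derivative approximation} applies verbatim; once that identification is made, the remaining argument is the elementary norm computation above.
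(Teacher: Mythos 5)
Your proof is correct and matches the argument the paper intends: the corollary is stated without proof precisely because it follows by applying Lemma~\ref{directional derivative approximation} along each coordinate direction $e_i$ and aggregating the $n$ per-coordinate bounds of $\frac{L\sigma}{2}$ under the Euclidean norm, exactly as you do. Your explicit check that each entry of $g(x)$ is a genuine directional-derivative approximation along a unit vector is the right point of care, and nothing further is needed.
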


We work with forward finite differencing rather than central finite differencing because although central finite differencing can give better accuracy, it requires twice as many function evaluations.

\subsection{Function properties}
Although we show experimentally that AELS is competitive even on nonconvex objective functions with potentially non-Lipschitz gradients, we prove convergence rates in the simpler setting of strongly convex functions with Lipschitz gradients. Functions with various subsets of these properties satisfy relationships that will be useful in our analysis.

\begin{lemma}
\label{strongly convex property}
If $f$ is $m$-strongly convex, then for all $x$:
\begin{align}
f(x) - f_* \leq \frac{1}{2m}\norm{\nabla f(x)}^2\;.
\end{align}
\begin{proof} See chapter 3 in \cite{recht-wright}.
\end{proof}
\end{lemma}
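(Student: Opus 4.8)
The plan is to exploit the quadratic lower bound afforded by $m$-strong convexity and then minimize it globally, rather than evaluating it at any particular point. Fix an arbitrary $x \in \R^n$. By Definition~\ref{strong convexity}, for every $y$ we have the lower bound
\begin{align}
f(y) \geq f(x) + \nabla f(x)^T(y-x) + \frac{m}{2}\norm{y-x}^2 =: q(y)\;.
\end{align}
Since this inequality holds for all $y$, it holds in particular at the minimizer of $f$, so $f_* = \min_y f(y) \geq \min_y q(y)$. The key observation is that the right-hand side is a concrete quadratic whose minimum I can compute in closed form.

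The next step is to evaluate $\min_y q(y)$. The map $q$ is a strictly convex quadratic in $y$ (its Hessian is $mI \succ 0$), so its unique minimizer is obtained by setting the gradient $\nabla f(x) + m(y-x)$ to zero, giving $y = x - \frac{1}{m}\nabla f(x)$. Substituting this back yields
\begin{align}
\min_y q(y) = f(x) - \frac{1}{2m}\norm{\nabla f(x)}^2\;.
\end{align}
Combining the two displays gives $f_* \geq f(x) - \frac{1}{2m}\norm{\nabla f(x)}^2$, and rearranging produces the claimed bound $f(x) - f_* \leq \frac{1}{2m}\norm{\nabla f(x)}^2$.

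There is no genuine obstacle here, as this is a standard consequence of strong convexity; the only care needed is the elementary minimization of $q$, which I would double-check by completing the square in $z = y - x$ to guard against sign errors in the $\frac{1}{2m}$ coefficient. I would also emphasize that the argument uses only the global minimizability of the lower bound $q$ and never any special structure of $x_*$, so the inequality holds for every $x \in \R^n$, exactly as stated.
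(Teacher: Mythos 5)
Your proof is correct and is exactly the standard argument that the paper's cited reference (chapter 3 of \cite{recht-wright}) gives: minimize both sides of the strong-convexity lower bound over $y$, compute the quadratic's minimum in closed form, and rearrange. Since the paper defers to that citation rather than writing out a proof, your proposal simply fills in the same textbook derivation; no differences to report.
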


\begin{lemma}
\label{taylor and lipschitz and strong convexity}
Let $f$ be any $m$-strongly convex function with $L$-Lipschitz gradient, $x$ be the current iterate, $d$ be any vector with $d^T \nabla f(x) < 0$, and $t > 0$ be any step size. Then for all $x$:
\begin{align}
f(x) + t d^T \nabla f(x) + \frac{mt^2}{2}\norm{d}^2 \leq f(x + td) \leq f(x) + t d^T \nabla f(x) + \frac{Lt^2}{2}\norm{d}^2\;.
\end{align}
Note that if we set $d = -f'_v(x)v$ for an arbitrary unit vector $v$, then $d^T\nabla f(x) = -(f'_v(x))^2 < 0$ and
\begin{align}
f(x) - t\left(1-\frac{tm}{2}\right)(f'_v(x))^2 \leq f(x - tf'_v(x)v) \leq f(x) - t\left(1-\frac{tL}{2}\right)(f'_v(x))^2\;.
\end{align}
Regardless of the choice of $d$, the first inequality requires only $m$-strong convexity and the second inequality requires only $L$-Lipschitz gradient.
\begin{proof}
See chapter 2 in \cite{Nesterovbook}.
\end{proof}
\end{lemma}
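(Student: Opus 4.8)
The plan is to derive both inequalities directly from the Taylor-based characterizations already recorded in Definition~\ref{Lipschitz gradient} and Definition~\ref{strong convexity}, with no further machinery. Each characterization is a quadratic bound on $f(y)$ in terms of $f(x)$, $\nabla f(x)^T(y-x)$, and $\norm{y-x}^2$; the entire lemma is obtained by specializing $y = x + td$, so that $y - x = td$, $\nabla f(x)^T(y-x) = t\, d^T\nabla f(x)$, and $\norm{y-x}^2 = t^2\norm{d}^2$.

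For the upper bound, I would invoke the second form of Definition~\ref{Lipschitz gradient}, namely $f(y) \leq f(x) + \nabla f(x)^T(y-x) + \frac{L}{2}\norm{y-x}^2$, and substitute $y = x + td$ to obtain $f(x+td) \leq f(x) + t\, d^T\nabla f(x) + \frac{Lt^2}{2}\norm{d}^2$; this uses only the $L$-Lipschitz gradient property. Symmetrically, for the lower bound I would invoke the second form of Definition~\ref{strong convexity}, $f(y) \geq f(x) + \nabla f(x)^T(y-x) + \frac{m}{2}\norm{y-x}^2$, and the same substitution yields $f(x+td) \geq f(x) + t\, d^T\nabla f(x) + \frac{mt^2}{2}\norm{d}^2$, using only $m$-strong convexity. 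Chaining the two gives the displayed two-sided inequality, and the claim about which property each side requires is immediate from this derivation.

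For the specialized statement, I would set $d = -f'_v(x)\,v$ for a unit vector $v$ and simplify using $\norm{v} = 1$. Since the notation gives $f'_v(x) = \frac{v^T\nabla f(x)}{\norm{v}} = v^T\nabla f(x)$ when $\norm{v}=1$, I get $d^T\nabla f(x) = -f'_v(x)\,(v^T\nabla f(x)) = -(f'_v(x))^2$, which is negative (hence a valid descent direction), and $\norm{d}^2 = (f'_v(x))^2\norm{v}^2 = (f'_v(x))^2$. Plugging these into the general two-sided inequality and collecting the coefficient of $(f'_v(x))^2$ produces $f(x) - t\bigl(1 - \frac{tm}{2}\bigr)(f'_v(x))^2$ on the left and $f(x) - t\bigl(1 - \frac{tL}{2}\bigr)(f'_v(x))^2$ on the right, exactly as stated.

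There is no substantive obstacle here: both bounds are one-line consequences of the definitions, and the only place warranting care is the directional-derivative bookkeeping in the specialization, where the unit-norm assumption must be used twice (once to identify $f'_v(x)$ with $v^T\nabla f(x)$, and once to simplify $\norm{d}^2$). Everything else is routine algebraic collection, which is why the original text simply cites the standard reference.
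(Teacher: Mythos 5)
Your proof is correct and is exactly the argument the paper delegates to its citation of Nesterov's book: both sides of the two-sided bound are immediate specializations of the quadratic Taylor-form characterizations already recorded in Definitions~\ref{Lipschitz gradient} and~\ref{strong convexity} at $y = x + td$, and your bookkeeping in the $d = -f'_v(x)v$ case (using $\norm{v}=1$ both to identify $f'_v(x) = v^T\nabla f(x)$ and to get $\norm{d}^2 = (f'_v(x))^2$) is accurate. There are no gaps, and the attribution of each inequality to its respective hypothesis follows directly from your derivation.
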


\section{Theoretical results}
\label{sec:theory}

In this section, we state and discuss convergence rates for general Armijo line searches, AELS, and a strong Wolfe line search; proofs are deferred to Section~\ref{sec:proofs}. We focus on objective functions that are strongly convex and have Lipschitz gradients.

We analyze three schemes for selecting $d_k$ (the search direction in Algorithm~\ref{alg:gradient descent}):
\begin{enumerate}
\item $d_k = -\nabla f(x_k)$.
\item $d_k$ is an approximation of $-\nabla f(x_k)$ with bounded error, \eg by using $\sigma$-finite differencing along coordinate directions. We assume $d_k$ is a descent direction.
\item $d_k = -\mu_k v_k$, where $v_k$ is a uniformly random unit vector and $\mu_k$ is a $\sigma$-finite differencing approximation of the directional derivative in direction $v_k$. We assume $\mu_k$ has the correct sign, so $d_k$ is a descent direction.
\end{enumerate}

The first scheme is useful for objective functions which are efficiently differentiable, while the second and third are useful for derivative-free optimization. 

We begin by analyzing general line searches whose step sizes satisfy the Armijo condition and then proceed with our analysis of AELS, followed by Wolfe line search. All three types of line search guarantee linear convergence (modulo gradient approximation errors), but AELS has two advantages in its convergence bounds compared to Armijo methods. 

The first is that AELS' convergence rates (Theorem~\ref{approx exact strongly convex iteration rate}) depend only on the objective function, with no dependence on the initial step size $T_0$. For Armijo methods, the convergence rates (Theorem~\ref{general armijo strongly convex rate}) depend on the minimum step size taken, which can be needlessly slow if $T_0 \ll \frac{1}{L}$ and the line search is insufficiently adaptive. 

The second advantage appears in random direction search, in which the expected convergence rate is truly linear for AELS, whereas an Armijo line search has an additive error term from the gradient approximation used in the Armijo condition.  

One downside of AELS is that it requires a slight overhead in function evaluations (see Theorem~\ref{approx exact strongly convex function rate}) compared to most Armijo line searches, to compensate for not using gradient information. In practice, this overhead is minimal compared to the speedup of convergence in fewer iterations, as shown in Section~\ref{sec:experiments}.

\subsection{Convergence rates for general Armijo line searches}
\label{sec:general_armijo_results}

\begin{definition}
\label{def:t_a}
$t_a$ is the positive solution to $f(x + t d) = f(x) + \frac{t}{2} d^T\nabla f(x)$, the Armijo condition (Definition~\ref{armijo}) in direction $d$ (with $d^T\nabla f(x) < 0$) from iterate $x$. 

Note that in some cases we work with an approximation to the gradient, in which case we replace the gradient $\nabla f(x)$ in this condition with its approximation $g(x)$.
\end{definition}

We begin with upper bounds on the iteration complexity of general line searches based on the Armijo condition (Definition~\ref{armijo}) using our three gradient approximation methods (true gradient, finite differencing, and random direction). These bounds apply to any line search whose step sizes satisfy the Armijo condition, such as the traditional backtracking line search and its adaptive version we consider in Section~\ref{sec:experiments}. As is evident in the bounds, the convergence rate of Armijo-based methods depends on the minimum step size they take. Traditional backtracking line search, for instance, never increases the step size from a user-provided initial guess, which may be arbitrarily small compared to $t_a$, the largest step size that satisfies the Armijo condition (Definition~\ref{def:t_a}). Adaptive backtracking line search can increase the step size, but may require many steps to reach a step size near $t_a$.

\begin{lemma}
\label{general armijo strongly convex rate}
Let $f$ be $m$-strongly convex and have $L$-Lipschitz gradient, and let $x_{k+1} = x_k - t_k g(x_k)$, where $t_k \in [\tmin, \tmax]$ satisfies the Armijo condition with respect to descent direction $-g(x_k)$ (\ie $g(x_k)$ is the gradient approximation and $-g(x_k)$ is the search direction) and $n$ is the dimension of $x$.

Using the true gradient,
\begin{align}
f(x_k) - f_* \leq (1-m\tmin)^k(f(x_0)-f_*)\;.
\end{align}

Using a $\sigma$-finite differencing gradient approximation (assuming that the approximate negative gradient is a descent direction),
\begin{align}
f(x_{k+1}) - f_* \leq \min \Big\{& (1-m\tmin)^k(f(x_0) - f_*) + \frac{L^2 \sigma \sqrt{n} \tmax \norm{x_0 - x_*}}{2m\tmin},  \nonumber \\&(1-\frac{m\tmin}{2})^k(f(x_0) - f_*) + \frac{L^2 \sigma^2 n \tmax}{4m\tmin} \Big\}\;. 
\end{align}

Using a random direction and approximating the directional derivative by $\sigma$-finite differencing (such that the search direction is a descent direction),
\begin{align}
& \hspace{-2.8cm}\E[f(x_k) - f_*] \leq \nonumber \\
\min\Big\{(f(x_0)-f_*)& \Big(1- \frac{m \tmin}{n} \Big)^k + \frac{ L^2\sigma n \tmax\norm{x_0 - x_*}}{2m \tmin}, \nonumber \\
 (f(x_0)-f_*) &\Big(1- \frac{m \tmin}{2n} \Big)^k + \frac{L^2\sigma^2n\tmax}{4m\tmin}  \Big\} \;.
\end{align}
\end{lemma}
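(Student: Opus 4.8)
The plan is to reduce all three settings to a single scalar recursion of the form $\delta_{k+1} \le \rho\,\delta_k + E$ with $\delta_k := f(x_k) - f_*$, and then unroll it: since $E$ is the same at every iteration and $\rho \in (0,1)$, summing the geometric series gives $\delta_k \le \rho^k\delta_0 + E\sum_{j\ge 0}\rho^j = \rho^k\delta_0 + \frac{E}{1-\rho}$. The three cases differ only in the values of $\rho$ and $E$ and in how the one-step decrease is extracted. Two facts are used throughout: the Armijo condition with $c_1=\tfrac12$ and search direction $-g(x_k)$ (with $\nabla f$ replaced by $g$ in the approximate settings, per Definition~\ref{def:t_a}) reads $f(x_{k+1}) \le f(x_k) - \tfrac{t_k}{2}\norm{g(x_k)}^2$, and Lemma~\ref{strongly convex property} gives the Polyak--{\L}ojasiewicz bound $\norm{\nabla f(x_k)}^2 \ge 2m\,\delta_k$. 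The true-gradient case ($g=\nabla f$) is then immediate: using $t_k\ge\tmin$ and the PL bound yields $\delta_{k+1}\le(1-m\tmin)\delta_k$, i.e.\ $\rho=1-m\tmin$ and $E=0$, which unrolls to the stated rate.

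For the finite-differencing case I would write $g_k=\nabla f(x_k)+e_k$ with $\norm{e_k}\le\tfrac{L\sigma\sqrt n}{2}$ by Corollary~\ref{directional derivative corollary}, and obtain the two bounds from two ways of lower-bounding $\norm{g_k}^2$ inside the Armijo decrease. For the first bound (rate $1-m\tmin$) I expand $\norm{g_k}^2\ge\norm{\nabla f_k}^2-2\norm{\nabla f_k}\norm{e_k}$ and control the cross term by $\norm{\nabla f_k}\le L\norm{x_k-x_*}\le L\norm{x_0-x_*}$ together with $t_k\le\tmax$, giving the per-step error $\tfrac{L^2\sigma\sqrt n\,\tmax\norm{x_0-x_*}}{2}$; unrolling with $\tfrac{1}{1-\rho}=\tfrac{1}{m\tmin}$ reproduces the first expression. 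For the second bound (rate $1-\tfrac{m\tmin}{2}$) I instead use the square-completion inequality $\norm{a+b}^2\ge\tfrac12\norm{a}^2-\norm{b}^2$ to get $\norm{g_k}^2\ge\tfrac12\norm{\nabla f_k}^2-\norm{e_k}^2$, so half the descent survives and the residual is the $\sigma^2$-scaled term $\tfrac{\tmax}{2}\norm{e_k}^2$; unrolling with $\tfrac{1}{1-\rho}=\tfrac{2}{m\tmin}$ yields the second expression. Taking the minimum of the two finished recursions gives the stated result.

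The random-direction case mirrors this, replacing the gradient norm by a directional derivative and then taking expectations. Here $g_k=\mu_k v_k$ so $\norm{g_k}^2=\mu_k^2$, and $\mu_k=f'_{v_k}(x_k)+\eta_k$ with $\absval{\eta_k}\le\tfrac{L\sigma}{2}$ by Lemma~\ref{directional derivative approximation}. The Armijo decrease becomes $f(x_{k+1})\le f(x_k)-\tfrac{t_k}{2}\mu_k^2$; after lower-bounding $\mu_k^2$ in the same two ways (either $\mu_k^2\ge(f'_{v_k})^2-2\absval{f'_{v_k}}\absval{\eta_k}$ or $\mu_k^2\ge\tfrac12(f'_{v_k})^2-\eta_k^2$) and applying the pointwise bounds $\tmin\le t_k\le\tmax$ so the inequality holds for every realization of $v_k$, I take the expectation over the uniform unit vector $v_k$. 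The crucial identity is $\E_{v_k}\bigl[(v_k^T\nabla f_k)^2\bigr]=\tfrac1n\norm{\nabla f_k}^2$, which injects the extra factor $1/n$ into the contraction rates $1-\tfrac{m\tmin}{n}$ and $1-\tfrac{m\tmin}{2n}$; for the error terms I would bound $\absval{v_k^T\nabla f_k}\le\norm{\nabla f_k}\le L\norm{x_0-x_*}$ (first bound) or $\E[\eta_k^2]\le\tfrac{L^2\sigma^2}{4}$ (second bound). The tower property then turns the one-step conditional recursion into a recursion on $\E[\delta_k]$, which unrolls exactly as before.

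The main obstacle I anticipate is the bookkeeping around the error terms rather than any single hard step. Two points need care: first, the claim $\norm{x_k-x_*}\le\norm{x_0-x_*}$ used in the first bound of each approximate case, which I would justify from the monotone decrease $f(x_{k+1})\le f(x_k)$ (guaranteed since the Armijo condition forces $f(x_{k+1})\le f(x_k)-\tfrac{t_k}{2}\norm{g_k}^2$) confining the iterates to the initial sublevel set, on which strong convexity controls $\norm{x_k-x_*}$; second, choosing the precise lower bound on $\norm{g_k}^2$ (the crude cross-term split versus the square-completion estimate) so as to land exactly on each stated constant, and ordering the pointwise $t_k$-bounds \emph{before} taking expectations in the random-direction case so that the dependence of $t_k$ on $v_k$ does not interfere with the expectation identity.
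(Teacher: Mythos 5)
Your proposal is correct and follows essentially the same route as the paper's own proof: the same scalar recursion $\delta_{k+1}\le\rho\,\delta_k+E$ unrolled via a geometric series, the same two lower bounds on $\norm{g_k}^2$ (cross-term expansion for the rate $1-m\tmin$ with error linear in $\sigma$, and the square-completion bound for the rate $1-\tfrac{m\tmin}{2}$ with error quadratic in $\sigma$), and in the random-direction case the same identity $\E[v_kv_k^T]=\tfrac{1}{n}I$ applied after the pointwise $t_k$ bounds, with the tower property. One caveat on the point you yourself flag: the inequality $\norm{x_k-x_*}\le\norm{x_0-x_*}$ is used silently in the paper, and your proposed sublevel-set justification does not quite deliver it — monotone decrease of $f$ plus strong convexity gives only $\tfrac{m}{2}\norm{x_k-x_*}^2\le f(x_k)-f_*\le f(x_0)-f_*\le\tfrac{L}{2}\norm{x_0-x_*}^2$, i.e.\ $\norm{x_k-x_*}\le\sqrt{L/m}\,\norm{x_0-x_*}$, which would inflate the first error term in each approximate case by a factor of $\sqrt{L/m}$. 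So your write-up is, if anything, more careful than the paper on this step, but to land exactly on the stated constants you would either need to establish distance monotonicity separately or restate those error terms with $\max_i\norm{x_i-x_*}$ (or the $\sqrt{L/m}$-inflated constant) in place of $\norm{x_0-x_*}$.
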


Lemma~\ref{general armijo strongly convex rate} shows in general linear convergence with a worst-case rate determined by the strong convexity parameter $m$ of the objective and the minimum step size taken during the line search: the greater the curvature of the objective and the larger the steps, the faster the convergence. In particular, using Lemma~\ref{t_a bounds} we can bound the minimum step size for any Armijo line search. For traditional backtracking line search with initial step size $T_0$ using the true gradient, $\tmin \geq \min(T_0, \frac{\beta}{L})$. For adaptive backtracking (which increases the initial step size by a constant factor $\beta^{-1}$ in each step) using the true gradient, $\tmin \geq \min(\frac{T_0}{\beta^k}, \frac{\beta}{L})$ in step $k$. We also experimented with a ``forward-tracking'' Armijo line search, which increases the initial step size as much as necessary in each step to guarantee $\tmin \geq \frac{\beta}{L}$, assuming the true gradient is used. This method guarantees a step size within a $\beta$ fraction of the maximum step size that satisfies the Armijo condition; we found it to be more efficient than both traditional and adaptive backtracking line searches, but slower than AELS, which bypasses the Armijo condition entirely. 

Approximating the gradient by $\sigma$-finite differencing slows the convergence rate by an additive error term, which goes to zero as $\sigma \to 0$. This error term is larger for higher-dimensional objectives (since each dimension of the gradient must be estimated), ill-conditioned objectives (with large $L$ and small $m$), and objective-algorithm pairs with large variation in step size (often related to poor conditioning). The two-term rate means we expect linear convergence until a certain accuracy threshold, beyond which we can no longer improve because the magnitude of the true gradient is overwhelmed by approximation error. The accuracy threshold can be dropped by decreasing the sampling radius $\sigma$ (assuming noiseless function evaluations); in practice this could be done adaptively to achieve a desired precision with performance mostly adhering to the linear term until progress becomes limited by numerical precision. 

If descent proceeds along random directions with approximate directional derivatives, the linear rate is worsened by a factor of the dimension (as observed in \cite{nesterov2017random}), with a similar $\sigma$-dependent accuracy threshold that can be managed by (potentially adaptive) reduction of $\sigma$.

\subsection{Approximately exact line search}
\label{sec:approx_exact_results}

\begin{definition}
\label{def:t_*}
$t_* \equiv \argmin_t f(x + td)$, the exact minimizer of a line search along direction $d$ (with $d^T\nabla f(x) < 0$) from iterate $x$.
\end{definition}

The core idea of AELS is to find a step size close to (and no greater than) $t_*$. More precisely, we have the following invariant in each step of Algorithm~\ref{alg:approximately exact line search with warm start}.

\begin{invariant}
\label{approx exact invariant}
Let $t$ be the step size used in a step of Algorithm~\ref{alg:approximately exact line search with warm start}. Then $t \in [\beta^2 t_*, t_*]$, where $t_*$ is the step size used in exact line search (using the same search direction from the same iterate).
\end{invariant}

Although AELS may choose a step size that does not satisfy the Armijo condition, it does so only if the larger step size achieves a smaller objective value compared to $t_a$. This property is central to the proof of Theorem~\ref{approx exact strongly convex iteration rate}, which bounds the iteration complexity of AELS.

\begin{theorem}
\label{approx exact strongly convex iteration rate}
On $m$-strongly convex objectives with $L$-Lipschitz gradients, Algorithm~\ref{alg:approximately exact line search with warm start} satisfies the following convergence rates (assuming each step follows a descent direction):

Using the true gradient,
\begin{align}
f(x_k) - f_* \leq \left(1-\beta^2\left(1-\sqrt{1-\frac{m}{L}} \right) \right)^k (f(x_0)-f_*) \;.
\end{align}

Using a $\sigma$-finite differencing gradient approximation with $\sigma = \frac{2\tilde \sigma m \Delta}{L\sqrt n}$ and $\tilde \sigma < 1$ (so that the negative gradient approximation is a descent direction),
\begin{align}
& \hspace{-0.85cm} f(x_{k+1}) - f_* \leq  \nonumber \\
\min \Big\{& \Big(1-\beta^2\Big(1-\sqrt{1-\frac{m}{L}}\Big)\frac{1-\tilde\sigma}{(1+\tilde\sigma)^2}\Big)^k(f(x_0) - f_*) + \frac{\tilde\sigma(1+\tilde\sigma)^2\Delta L \norm{x_0 - x_*}}{(1-\tilde\sigma)^2\beta^2\Big(1-\sqrt{1-\frac{m}{L}}\Big)}\;, \nonumber \\
&\Big(1-\frac{\beta^2}{2}\Big(1-\sqrt{1-\frac{m}{L}}\Big)\frac{1-\tilde\sigma}{(1+\tilde\sigma)^2}\Big)^k(f(x_0) - f_*) + \frac{\tilde\sigma^2m(1+\tilde\sigma)^2\Delta^2}{(1-\tilde\sigma)^2\beta^2\Big(1-\sqrt{1-\frac{m}{L}}\Big)} \Big\}\;. 
\end{align}

Using a random direction and approximating the directional derivative (by any method with the correct sign),
\begin{align}
\E[f(x_k) - f_*] \leq \left(1- \frac{\beta^2}{n}\left(1-\sqrt{1-\frac{m}{L}}\right)  \right) ^k(f(x_0)-f_*)  \;,
\end{align}
where $n$ is the dimension of $x$.
\end{theorem}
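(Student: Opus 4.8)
The plan is to reduce all three statements to a single one-step decrease bound and then specialize. The engine is Invariant~\ref{approx exact invariant}: the step $t$ returned by Algorithm~\ref{alg:approximately exact line search with warm start} satisfies $t \in [\beta^2 t_*, t_*]$, where $t_*$ minimizes the one-dimensional slice $\phi(s) = f(x_k + s d_k)$. First I would observe that, by unimodality of $\phi$ on $[0, t_*]$ (which here follows from convexity of $f$), $\phi$ is nonincreasing there, so that the convexity chord inequality gives $\phi(0) - \phi(t) \ge \tfrac{t}{t_*}\bigl(\phi(0) - \phi(t_*)\bigr) \ge \beta^2\bigl(\phi(0) - \phi(t_*)\bigr)$. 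Thus the AELS step achieves at least a $\beta^2$ fraction of the decrease of an exact line search, and the whole problem reduces to bounding the exact-line-search decrease against the current suboptimality $f(x_k) - f_*$.

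For the true-gradient case ($d_k = -\nabla f(x_k)$), I would control the exact-line-search decrease from below and the suboptimality from above. The upper inequality of Lemma~\ref{taylor and lipschitz and strong convexity}, evaluated at the step $1/L$, gives $\phi(0) - \phi(t_*) \ge \phi(0) - \phi(1/L) \ge \tfrac{1}{2L}\norm{\nabla f(x_k)}^2$, while Lemma~\ref{strongly convex property} gives $f(x_k) - f_* \le \tfrac{1}{2m}\norm{\nabla f(x_k)}^2$. Combining these with the $\beta^2$ factor and carrying the constants through the strongly convex and Lipschitz quadratic envelopes produces a per-step contraction of the stated form $1 - \beta^2\bigl(1 - \sqrt{1 - m/L}\bigr)$ (a more direct combination even yields the stronger $1-\beta^2 m/L$, of which the stated factor is a relaxation since $1-\sqrt{1-m/L}\le m/L$); iterating over $k$ gives the first bound.

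For the finite-differencing gradient, the same pipeline applies along the approximate direction $d_k = -g(x_k)$: AELS never consults the gradient, so Invariant~\ref{approx exact invariant} holds verbatim with $t_*$ defined by $d_k$. The new ingredient is translating decrease in the approximate direction into decrease measured against the true gradient. Using Corollary~\ref{directional derivative corollary} with the prescribed $\sigma = \tfrac{2\tilde\sigma m \Delta}{L\sqrt{n}}$ together with $\norm{\nabla f(x_k)} \ge m\norm{x_k - x_*} \ge m\Delta$, I would bound the relative direction error by $\tilde\sigma$, so that $d_k^T \nabla f(x_k) \le -(1-\tilde\sigma)\norm{\nabla f(x_k)}^2$ and $\norm{d_k} \le (1+\tilde\sigma)\norm{\nabla f(x_k)}$; these are precisely the factors producing $\tfrac{1-\tilde\sigma}{(1+\tilde\sigma)^2}$ in the contraction. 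The two additive error terms, and hence the minimum of two expressions, arise from bounding the leftover cross term in two ways (linearly versus quadratically in $\sigma$), mirroring Lemma~\ref{general armijo strongly convex rate}.

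For random direction search, I would run the identical argument along the ray $\pm v_k$ and then take expectation over the uniformly random unit vector $v_k$. The key point, and the reason no additive $\sigma$ term survives, is that AELS consults only the sign of $\mu_k$ to orient the ray and otherwise selects the step purely from function values, so finite-differencing error in the magnitude of the directional derivative is irrelevant. The slice decrease is then governed by $(v_k^T \nabla f(x_k))^2$, and the projection identity $\E\bigl[(v_k^T \nabla f(x_k))^2\bigr] = \tfrac{1}{n}\norm{\nabla f(x_k)}^2$ introduces the factor $1/n$, giving the third bound. I expect the main obstacle to be the finite-differencing case: making the chain from decrease along the perturbed direction to decrease in the true objective fully rigorous, keeping the invariant's $t_*$ (defined by the perturbed direction) compatible with the quadratic envelopes of Lemma~\ref{taylor and lipschitz and strong convexity}, and bookkeeping the two error bounds so that exactly the stated min-expression emerges.
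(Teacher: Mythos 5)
Your core reduction is correct and genuinely different from the paper's. You use the convexity chord inequality to show the AELS step captures at least a $\beta^2$ fraction of the exact-line-search decrease; the paper never does this. Instead it compares the AELS step with the Armijo-equality step $t_a$ of Definition~\ref{def:t_a}: since $t_k \le t_*$ (Invariant~\ref{approx exact invariant}) and the slice is decreasing on $[0,t_*]$, one gets $f(x_{k+1}) \le f(x_k) - \tfrac{1}{2}\min(t_k,t_a)\norm{g(x_k)}^2$, then $\min(t_k,t_a)$ is lower-bounded by $\gamma_k \tfrac{\beta^2}{m}\bigl(1-\sqrt{1-m/L}\bigr)$ via Lemma~\ref{t_a bounds} and Lemma~\ref{t_* bounds}, and the machinery of Lemma~\ref{general armijo strongly convex rate} is rerun with this effective step. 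Your route buys real simplifications in two of the three cases: for the true gradient and the random direction it bypasses Lemma~\ref{t_* bounds} and the Armijo condition entirely and yields the stronger contraction factors $1-\beta^2 m/L$ and $1-\beta^2 m/(nL)$, which imply the stated ones since $1-\sqrt{1-m/L}\le m/L$; and your observation that only the sign of $\mu_k$ matters in the random case is exactly the paper's. One caution: the chord inequality needs convexity of the slice, not mere unimodality (the paper's argument needs only unimodality), but this is harmless under the theorem's hypotheses.

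The one place your pipeline, as described, does not deliver the stated statement is the finite-differencing case. Routing the relative-error factors through the Lipschitz envelope gives a per-step decrease of at least $\beta^2\frac{(d_k^T\nabla f(x_k))^2}{2L\norm{d_k}^2} \ge \beta^2\frac{m}{L}\frac{(1-\tilde\sigma)^2}{(1+\tilde\sigma)^2}\bigl(f(x_k)-f_*\bigr)$, i.e.\ a pure linear rate with no additive term. That is a legitimate (and in some ways cleaner) bound, but it is \emph{incomparable} with the theorem's: its contraction constant can be strictly worse than the stated $\beta^2\bigl(1-\sqrt{1-m/L}\bigr)\frac{1-\tilde\sigma}{(1+\tilde\sigma)^2}$ (for instance, as $m/L \to 1$ one has $\frac{m}{L}(1-\tilde\sigma) < 1-\sqrt{1-m/L}$), so it does not imply the stated min-expression, and conversely. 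Your stated intent to "mirror Lemma~\ref{general armijo strongly convex rate}" is the right fix, but to do it you must express the decrease through $\norm{g(x_k)}^2$ rather than through $(d_k^T\nabla f(x_k))^2/\norm{d_k}^2$: inside your chord framework this is available by comparing against the Armijo-equality point, $\phi(0)-\phi(t_*) \ge \phi(0)-\phi(t_a) = \frac{t_a}{2}\norm{g(x_k)}^2$, with $t_a$ lower-bounded as in Lemma~\ref{t_a bounds}; only then does the cross-term decomposition (bounded linearly versus quadratically in the error) produce the two additive terms and the stated min. Without this step, the finite-differencing part of your proof establishes a different theorem, not the one being claimed.
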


Note that Theorem~\ref{approx exact strongly convex iteration rate}, like Lemma~\ref{general armijo strongly convex rate}, assumes that the search direction in each step is a descent direction, \ie that the gradient or directional derivative estimation is sufficiently accurate. As the iterates approach the minimum, or any low-gradient region, this assumption may require adjusting the finite differencing radius $\sigma$, or other adaptations. In our implementation, we use a fixed $\sigma$ based on numerical precision (as recommended by Nocedal and Wright \cite{nocedal2006line}), and add a ``patience'' parameter to avoid endless searching: if AELS fails to improve on the function value after a specified number of line search trials (we use 20 in our experiments), we abandon the step and try again.

Theorem~\ref{approx exact strongly convex iteration rate} parallels Lemma~\ref{general armijo strongly convex rate}, with a difference in the random direction regime, in which AELS is robust to directional derivative approximation error. More generally, AELS has the feature of choosing the step size based only on function evaluations, so the performance of the line search itself has no dependence on the accuracy of the gradient approximation. Indeed in the stochastic setting (explored in Figure~\ref{fig:logistic_sgd_batchsize}) AELS performs much better with smaller minibatches (less accurate gradient and function evaluations) compared to the baseline Armijo line searches.

\begin{theorem}
\label{approx exact strongly convex function rate}
On $m$-strongly convex objectives with $L$-Lipschitz gradients, Algorithm~\ref{alg:approximately exact line search with warm start} uses at most
\begin{align}
&k\left(4 + \ceil*{\log_{1/\beta}\left(\frac{\gammamax(1+\sqrt{1-\frac{m}{L}})}{\gammamin(1-\sqrt{1-\frac{m}{L}})}\right)} \right) \nonumber \\
&~~~~~~~~ + \ceil*{\log_{1/\beta}\max\left(\frac{\beta^2\gammamin(1-\sqrt{1-\frac{m}{L}})}{mT_0}, \frac{mT_0}{\beta\gammamax(1+\sqrt{1-\frac{m}{L}})}\right)}\;.
\end{align}
function evaluations for $k$ steps, where $\gamma_k = \frac{-d_k^T\nabla f(x_k)}{\norm{d_k}^2} \in [\gammamin, \gammamax]$ according to Lemma~\ref{gamma bounds}.
\end{theorem}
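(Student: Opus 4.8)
The plan is to bound the number of function evaluations in a single step $k$ as a small constant plus the number of trial step sizes examined by the \texttt{Repeat} loops, then sum over the $k$ steps, using the warm start $T_k = \beta^{-1}t_{k-1}$ to keep the ratio of successive step sizes (and hence the per-step loop length) controlled by a quantity depending only on $m$, $L$, and the $\gamma$-range. First I would note that each step performs the two initial evaluations $f(x_k)$ and $f(x_k + T_k d_k)$ (one of which can be reused across consecutive steps, since the accepted trial point of step $k-1$ is exactly $x_k$) together with one evaluation per iteration of whichever loop runs; each loop halts one probe after $f$ stops decreasing, which contributes an additive constant. Because every iteration rescales the trial step by $\beta^{\pm 1}$, the number of trials in step $k$ equals $\bigl|\log_{1/\beta}(t_k/T_k)\bigr|$ up to a constant absorbing the two-factor pullback $\beta^2 t$ returned in the increasing regime and the single wasted probe in the ``medium'' regime of Figure~\ref{fig:medium}; careful accounting collapses this to the constant $4$.

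Next I would convert the step-size ratio into the stated logarithm. By Invariant~\ref{approx exact invariant} the returned step satisfies $t_k \in [\beta^2 t_*^{(k)}, t_*^{(k)}]$, and the warm start obeys $T_k = \beta^{-1} t_{k-1} \in [\beta\, t_*^{(k-1)}, \beta^{-1} t_*^{(k-1)}]$, where $t_*^{(k)}$ is the step-$k$ exact minimizer. To bound the minimizers I would sandwich $f$ between its $m$- and $L$-quadratic models using Lemma~\ref{taylor and lipschitz and strong convexity}: comparing the strong-convexity lower model at $t_*^{(k)}$ against the value $f(x_k) - \tfrac{\gamma^2\norm{d}^2}{2L}$ attained at the minimizer $\gamma/L$ of the Lipschitz upper model gives $\tfrac{m}{2}(t_*^{(k)})^2 - \gamma\, t_*^{(k)} + \tfrac{\gamma^2}{2L} \le 0$, whose roots yield $t_*^{(k)} \in \bigl[\tfrac{\gamma(1-\sqrt{1-m/L})}{m}, \tfrac{\gamma(1+\sqrt{1-m/L})}{m}\bigr]$. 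Combining with $\gamma_k \in [\gammamin,\gammamax]$ (Lemma~\ref{gamma bounds}) places every minimizer in a fixed interval $[t_-, t_+]$ with $t_+/t_- = \frac{\gammamax(1+\sqrt{1-m/L})}{\gammamin(1-\sqrt{1-m/L})}$, so for $k \ge 1$ the warm start already lies within a bounded factor of $t_k$ and the per-step count is at most $4 + \ceil*{\log_{1/\beta}\!\bigl(\tfrac{\gammamax(1+\sqrt{1-m/L})}{\gammamin(1-\sqrt{1-m/L})}\bigr)}$.

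Finally I would treat the first step separately, since its warm start $\beta^{-1}T_0$ derives from the arbitrary input $T_0$ rather than a previous minimizer. If $T_0 \ll t_-$ the increasing loop must climb a factor $\approx \beta^2 t_-/T_0$, and if $T_0 \gg t_+$ the decreasing loop must fall a factor $\approx T_0/(\beta t_+)$; taking the larger of the two contributes the additive term $\ceil*{\log_{1/\beta}\max\bigl(\tfrac{\beta^2\gammamin(1-\sqrt{1-m/L})}{mT_0}, \tfrac{mT_0}{\beta\gammamax(1+\sqrt{1-m/L})}\bigr)}$. Summing the uniform per-step bound over all $k$ steps and adding this single cold-start term gives the claim.

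The main obstacle I expect is the per-step bookkeeping together with showing the cold-start cost is paid only once. Two points need care: (i) verifying that the ``medium'' regime — where a single increasing probe overshoots — is in fact cheap, because entering it forces $T_k$ to lie within a factor $\beta^{-1}$ of $t_*^{(k)}$, so the ensuing decreasing loop has constant length and does not push the constant above $4$; and (ii) arguing that once step $0$ brings the step size into $[t_-, t_+]$, the relation $T_k = \beta^{-1}t_{k-1}$ keeps every later warm start within a bounded factor of the next minimizer, so the large logarithm in $T_0$ never recurs. Establishing the $t_*$ bound with its $\sqrt{1-m/L}$ factor is routine once the quadratic-model sandwich is in place, but it is the ingredient that must align exactly with the constants in the statement.
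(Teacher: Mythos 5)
Your proposal is correct and follows essentially the same route as the paper's proof: per-step evaluation counting via the three loop cases, Invariant~\ref{approx exact invariant} plus the warm start $T_k = \beta^{-1}t_{k-1}$ to bound each step's cost by $4 + \ceil*{\log_{1/\beta}\left(\frac{\gammamax(1+\sqrt{1-m/L})}{\gammamin(1-\sqrt{1-m/L})}\right)}$, and a one-time cold-start term for $T_0$; your quadratic-sandwich derivation of $t_* \in \frac{\gamma}{m}\left[1-\sqrt{1-m/L},\, 1+\sqrt{1-m/L}\right]$ is exactly the paper's Lemma~\ref{t_* bounds}, just inlined rather than cited.
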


The first step requires some overhead to adjust the initial step size, and all subsequent steps require up to four function evaluations, plus some potential overhead depending on the conditioning of the objective and the quality of the gradient approximation (appearing through $\gamma_k$). AELS tends to require slightly more function evaluations per step compared to simpler line searches like the backtracking Armijo methods we consider in Section~\ref{sec:experiments}
because it relies on function evaluations only, and at least three function evaluations are required just to bracket the minimizer. 

Note that Theorem~\ref{approx exact strongly convex function rate} counts only function evaluations (not gradient evaluations), so if the gradient is approximated by forward finite differencing that will cost an additional $n$ function evaluations per step. Using a random direction and approximating the directional derivative by forward finite differencing requires one additional function evaluation per step.

\subsection{Strong Wolfe line search}

We also provide upper bounds on the iteration and function evaluation complexity of Wolfe line search in the same setting: an objective function $f$ that has $L$-Lipschitz gradient and $m$-strong convexity. As described in Section~\ref{sec:related_work}, the Wolfe conditions augment the Armijo condition to guarantee both sufficient decrease in the function value and sufficient decrease in its gradient; it is this second ``curvature condition'' that guarantees a step size that is not too small. 

\begin{definition}
\label{wolfe}
Strong Wolfe curvature condition \cite{wolfe1969convergence, nocedal2006line}: 
If our line search direction is $d$ (such that $d^T\nabla f(x) < 0$) from iterate $x$, then the curvature condition is
\begin{align}
\absval{d^T\nabla f(x + t d)} \leq c_2 \absval{d^T\nabla f(x)}\;,
\end{align}
where $c_2 \in (c_1, 1)$, where $c_1$ is the parameter in the Armijo condition \ref{armijo}.
\end{definition}

Our first goal is to bound the iteration complexity of this line search, which follows directly using Lemma~\ref{general armijo strongly convex rate}, since any step size that satisfies the strong Wolfe conditions must also satisfy the Armijo condition. To apply this lemma, what remains is to bound the minimum and maximum possible step sizes that satisfy the strong Wolfe conditions. Let $t_w$ be a step size that satisfies the strong Wolfe conditions.

\begin{lemma}
\label{t_w bounds}
If $f$ is $m$-strongly convex with $L$-Lipschitz gradient and $d$ is a search (descent) direction, then
\begin{align}
t_w \in  \frac{-d^T\nabla f(x)}{\norm{d}^2} \left[\frac{1 - c_2}{L}, \frac{1}{m} \right]\;.
\end{align}
Note that the first inequality requires only the Lipschitz gradient condition, and the second requires only the strong convexity condition. Without strong convexity (as $m \to 0$) we approach the one-sided bound $t_w \geq \frac{-d^T\nabla f(x)(1-c_2)}{L\norm{d}^2}$.
\begin{proof}
The upper bound is the same as in Lemma~\ref{t_a bounds}. The lower bound is identical to that in the proof of Theorem 3.2 in \cite{nocedal2006line}, which instead uses the weak Wolfe conditions (which are always satisfied if the strong Wolfe conditions are satisfied). 
\end{proof}
\end{lemma}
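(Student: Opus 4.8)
The plan is to establish the two one-sided bounds separately, since (as the lemma notes) the lower bound relies only on the $L$-Lipschitz gradient property while the upper bound relies only on $m$-strong convexity. Throughout I write $\phi(t) = f(x+td)$, so that $\phi'(t) = d^T\nabla f(x+td)$ and the descent assumption reads $\phi'(0) = d^T\nabla f(x) < 0$.

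For the lower bound I would use only the weak form of the curvature condition, namely $\phi'(t_w) \geq c_2\,\phi'(0)$, which is implied by the strong Wolfe condition of Definition~\ref{wolfe}. Subtracting $\phi'(0)$ from both sides gives $\phi'(t_w) - \phi'(0) \geq (c_2 - 1)\phi'(0)$, and since $c_2 < 1$ and $\phi'(0) < 0$ the right-hand side is positive. I then bound the left-hand side from above using Cauchy--Schwarz and the Lipschitz gradient: $\phi'(t_w)-\phi'(0) = d^T(\nabla f(x+t_w d)-\nabla f(x)) \leq \norm{d}\,\norm{\nabla f(x+t_w d)-\nabla f(x)} \leq L t_w \norm{d}^2$. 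Chaining the two inequalities and solving for $t_w$ yields $t_w \geq \frac{-d^T\nabla f(x)}{\norm{d}^2}\cdot\frac{1-c_2}{L}$, exactly the argument of Theorem 3.2 in \cite{nocedal2006line}.

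For the upper bound the cleanest route is to pass through $t_a$. Every step satisfying the strong Wolfe conditions satisfies the Armijo condition (Definition~\ref{armijo}) by construction, and since $\phi$ is convex the function $g(t) = \phi(t) - \phi(0) - \frac{t}{2}\phi'(0)$ is convex with $g(0) = 0$, $g'(0) = \frac{1}{2}\phi'(0) < 0$, and $g(t_a) = 0$ by Definition~\ref{def:t_a}; hence $g \leq 0$ precisely on $(0, t_a]$ and therefore $t_w \leq t_a$. It then suffices to reproduce the $t_a$ upper bound of Lemma~\ref{t_a bounds}: lower-bounding $\phi(t_a)$ by the $m$-strong convexity inequality of Lemma~\ref{taylor and lipschitz and strong convexity} and equating it to the Armijo boundary value $\phi(0) + \frac{t_a}{2}\phi'(0)$ gives $m t_a \norm{d}^2 \leq -d^T\nabla f(x)$, so that $t_w \leq t_a \leq \frac{-d^T\nabla f(x)}{\norm{d}^2}\cdot\frac{1}{m}$. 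The $m \to 0$ remark is then immediate, since only the lower bound survives.

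The main obstacle is getting the upper bound to match the stated factor of $\frac{1}{m}$ rather than a looser constant. The tempting direct argument --- combining the strong curvature bound $\phi'(t_w) \leq -c_2\phi'(0)$ with strong convexity $\phi'(t_w) \geq \phi'(0) + m t_w\norm{d}^2$ --- only yields $t_w \leq \frac{-(1+c_2)\,d^T\nabla f(x)}{m\norm{d}^2}$, which is weaker by a factor $(1+c_2)$. The key realization is that the tight upper bound comes not from the curvature condition but from the inclusion $t_w \leq t_a$ together with the strong-convexity bound on $t_a$, so that the Wolfe upper bound coincides exactly with the Armijo one; verifying $t_w \leq t_a$ through convexity of $g$ is the one nonroutine step.
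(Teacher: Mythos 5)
Your proof is correct and is in substance the same as the paper's (very terse) proof: the lower bound is exactly the computation from the proof of Theorem 3.2 in Nocedal and Wright, using only the weak curvature condition plus the Lipschitz gradient, and the upper bound uses only the Armijo half of the strong Wolfe conditions plus strong convexity, which is what the paper means by ``the same as in Lemma~\ref{t_a bounds}.'' The one cosmetic difference is the route to the upper bound: the paper simply reruns the Lemma~\ref{t_a bounds} computation with the Armijo \emph{inequality} in place of equality, i.e.\ $f(x) + t_w d^T\nabla f(x) + \frac{m t_w^2}{2}\norm{d}^2 \le f(x + t_w d) \le f(x) + \frac{t_w}{2} d^T\nabla f(x)$, which rearranges directly to $t_w \le \frac{-d^T\nabla f(x)}{m\norm{d}^2}$, whereas you first establish the inclusion $t_w \le t_a$ via convexity of $g$ and then invoke the $t_a$ bound. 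The two are equivalent; the direct substitution avoids needing convexity of the one-dimensional restriction, while your version makes the ordering $t_w \le t_a$ explicit, which is a slightly stronger intermediate fact. Note also that both your argument and the paper's inherit the convention $c_1 = \frac{1}{2}$ from Definition~\ref{armijo}; for general $c_1$ the same reasoning gives $t_w \le \frac{2(1-c_1)}{m}\cdot\frac{-d^T\nabla f(x)}{\norm{d}^2}$, a point your closing paragraph implicitly skirts but does not get wrong, since identifying the Armijo condition (rather than the curvature condition) as the source of the tight upper bound is indeed the key observation.
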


\paragraph{Rate comparison}
How does this range compare to the step sizes chosen by AELS or pure Armijo line searches (like traditional and adaptive backtracking)? Using the notation $\gamma = \frac{-d^T\nabla f(x)}{\norm{d}^2}$, AELS always selects a step size larger than $\frac{\gamma \beta^2}{2L}$ (the limit of Invariant~\ref{approx exact invariant} with Lemma~\ref{t_* bounds} as $m \rightarrow 0$). In the best case (where $m = L$), AELS always selects a step size larger than $\frac{\gamma \beta^2}{L}$. In the best case for a pure Armijo line search, in which the initial step size is sufficiently large, the selected step size is larger than $\frac{\gamma \beta}{L}$. In the same notation, the step size chosen by a Wolfe line search is at least $\frac{\gamma (1-c_2)}{L}$. These lower bounds are all identical, up to small dependence on the parameters of each algorithm and the assumption of sufficient initial step size for pure Armijo methods, which is exactly the assumption that both the Wolfe curvature condition and AELS remove. Based on these bounds, we expect AELS and Wolfe line search to enjoy similar iteration complexity to each other and to a well-initialized backtracking Armijo line search, when the search direction is the negative gradient or its approximation. When the search is in a random descent direction, AELS enjoys a benefit compared to the other algorithms because its search is independent of the norm of the search direction.

\paragraph{Function evaluation complexity comparison} 
In addition to bounding the iteration complexity of a descent method with a strong Wolfe line search (with Lemma~\ref{general armijo strongly convex rate} and Lemma~\ref{t_w bounds}), we also bound the number of function evaluations required for each line search, so that we can compare the function evaluation complexity of Wolfe line search and AELS. 
Specifically, we analyze the strong Wolfe condition line search presented in Algorithms 3.5 and 3.6 of Nocedal and Wright \cite{nocedal2006line}, with the simplification that successive trial step lengths in Algorithm 3.5 are chosen by scaling by a factor $\beta^{-1}$, where $\beta \in (0, 1)$, and that interpolation in Algorithm 3.6 is done by bisection.  These slightly simplified algorithms are reproduces in Algorithm~\ref{alg:wolfe} and Algorithm~\ref{alg:zoom}. The bound is presented in Lemma~\ref{Wolfe function complexity}, proven in Section~\ref{sec:wolfe_feval}, and discussed below.

\begin{lemma}
\label{Wolfe function complexity}
The strong Wolfe line search defined by Algorithms 3.5 and 3.6 in \cite{nocedal2006line}, with the simplifications that successive trial step lengths in Algorithm 3.5 are chosen by scaling by a factor $\beta^{-1}$, where $\beta \in (0, 1)$, interpolation in Algorithm 3.6 is done by bisection, and directional derivatives are approximated by two-point finite differencing, requires no more than
$$
2 + 2\log_2\left(\frac{L}{mc_2}\right) + \max\left(2+2\log_{1/\beta}\left(\frac{-d^T\nabla f(x)}{mT\norm{d}^2}\right), \log_2\left(\frac{-TL\norm{d}^2}{d^T\nabla f(x)}\right)\right)
$$
function evaluations per line search, where $T$ is the initial step size, $d$ is the search (descent) direction, and the objective $f$ is $m$-strongly convex with $L$-Lipschitz gradient.
\end{lemma}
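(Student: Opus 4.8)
The plan is to track function evaluations separately through the two phases of the Nocedal--Wright scheme: the bracketing phase (Algorithm 3.5) and the zoom phase (Algorithm 3.6). Throughout, write $\phi(\alpha) = f(x + \alpha d)$, so that $\phi'(\alpha) = d^T\nabla f(x + \alpha d)$, $\phi'(0) = d^T\nabla f(x) < 0$, and $\phi''(\alpha) = d^T\nabla^2 f(x+\alpha d)\,d \in [m\norm{d}^2, L\norm{d}^2]$ by strong convexity and the Lipschitz gradient. Hence $\phi$ is a one-dimensional strongly convex function; integrating $\phi''$ and using $\phi'(t_*)=0$ gives $\gamma/L \le t_* \le \gamma/m$ with $\gamma = \frac{-d^T\nabla f(x)}{\norm{d}^2}$, and Lemma~\ref{t_w bounds} pins the admissible steps to $t_w \in \gamma[\frac{1-c_2}{L}, \frac{1}{m}]$. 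The crucial bookkeeping observation is that, since directional derivatives are estimated by two-point finite differencing that reuses the trial value, an iteration reaching the curvature check costs two function evaluations ($\phi(\alpha)$ plus one neighbor), whereas a zoom iteration whose trial fails the sufficient-decrease/monotonicity test updates only $\alpha_{hi}$ and needs the single value $\phi(\alpha)$.

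First I would bound the bracketing phase. Since successive trials are $\alpha_{i+1} = \beta^{-1}\alpha_i$ from $\alpha_1 = T$, there are two regimes. If $T$ lies below the admissible region, the step is repeatedly inflated until it first exceeds the upper Wolfe bound $\gamma/m$, which takes at most $\ceil{\log_{1/\beta}(\gamma/(mT))}$ inflations at two evaluations each, yielding the first argument $2 + 2\log_{1/\beta}\big(\frac{-d^T\nabla f(x)}{mT\norm{d}^2}\big)$ of the maximum. If instead $T$ already overshoots, the very first trial fails the Armijo/monotonicity test or produces a nonnegative derivative, so the phase terminates after one iteration with the bracket $[0, T]$.

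Next I would bound the zoom phase, the technical heart of the argument. The invariant to establish (from the correctness of Algorithm 3.6) is that the active bracket $[\alpha_{lo}, \alpha_{hi}]$ always contains a strong-Wolfe point while bisection halves its width. I would split the bisections in two. The \emph{coarse} bisections are those whose midpoint overshoots $t_*$ (sufficient decrease or monotonicity fails): each only moves $\alpha_{hi}$ at one evaluation, and starting from width $\le T$ with $\alpha_{lo}\approx 0$, at most $\log_2(TL/\gamma)$ of them shrink the bracket to the scale $\gamma/L$ of $t_*$ — this is the second argument $\log_2\big(\frac{-TL\norm{d}^2}{d^T\nabla f(x)}\big)$ of the maximum, and it is vacuous once $T$ is already near $t_*$, since then the bracket handed off from Algorithm 3.5 has width $\lesssim \gamma/m$. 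The \emph{fine} bisections near $t_*$ trigger the curvature check at two evaluations each; here I would use $\phi'' \le L\norm{d}^2$ to show the region $\absval{\phi'(\alpha)} \le -c_2\phi'(0)$ has width at least $2c_2\gamma/L$, so bisecting a bracket of width at most $\gamma/m$ down to this width costs at most $\log_2(L/(mc_2))$ steps, producing the $2\log_2(L/(mc_2))$ term. Folding the two $T$-regimes into the maximum and collecting the $O(1)$ start-up evaluations into the leading constant $2$ then yields the claimed bound.

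The main obstacle I anticipate is the fine-zoom count: I must argue that once the bracket is narrow enough, bisection is \emph{guaranteed} to place a trial point inside the curvature-admissible region, not merely that the bracket is narrower than that region. This requires combining the lower bound $2c_2\gamma/L$ on the region width (from $\phi'' \le L\norm{d}^2$) with the strict monotonicity of $\phi'$ (from $\phi'' \ge m\norm{d}^2 > 0$) to locate $t_*$ within the shrinking bracket, so that a halving argument terminates in the stated number of steps. Correctly separating the coarse (one-evaluation) from the fine (two-evaluation) bisections, so that the $\log_2(TL/\gamma)$ and $\log_2(L/(mc_2))$ contributions appear with exactly the right constants, is the delicate accounting that produces the max-plus-sum structure of the bound.
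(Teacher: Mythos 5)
Your proposal matches the paper's proof essentially step for step: it bounds the bracketing phase via the largest admissible Wolfe step (Lemma~\ref{t_w bounds}), splits the zoom phase into one-evaluation iterations (trial fails sufficient decrease, only $\alpha_{\text{hi}}$ moves) and two-evaluation bisection iterations (curvature check), and uses the same two width estimates --- the curvature-admissible region has width at least $2c_2\gamma/L$ by the Lipschitz bound, and the bracket once both endpoints satisfy Armijo has width at most $\gamma/m$ by strong convexity, where $\gamma = \frac{-d^T\nabla f(x)}{\norm{d}^2}$. The termination subtlety you flag (guaranteeing a bisection point actually lands in the curvature region rather than merely having a bracket narrower than it) is handled in the paper by the same halving-plus-bracket-invariant reasoning you outline, so your plan is faithful to, and if anything slightly more careful than, the published argument.
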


Assuming efficient two-point directional derivative estimation, we can compare the per-iteration function evaluation complexity of strong Wolfe and Approximately Exact line searches. Strong Wolfe requires no more than $2 + 2\log_2\left(\frac{L}{mc_2}\right) + \max\left(2+2\log_{1/\beta}\left(\frac{-d^T\nabla f(x)}{mT\norm{d}^2}\right), \log_2\left(\frac{-TL\norm{d}^2}{d^T\nabla f(x)}\right)\right)$ function evaluations for a line search in direction $d$ beginning with trial step size $T$ (Lemma~\ref{Wolfe function complexity}). AELS requires no more than $2 + \max\left(\log_{1/\beta}\left(\frac{-\beta(1+\sqrt{1-m/L})d^T\nabla f(x)}{mT\norm{d}^2}\right), \log_{1/\beta}\left(\frac{-mT\norm{d}^2}{\beta^4(1-\sqrt{1-m/L})d^T\nabla f(x)}\right)\right)$ function evaluations for a line search in direction $d$ beginning with trial step size $T$ (Theorem~\ref{approx exact strongly convex function rate}). 

In the special case when $m=L$ (the objective is perfectly conditioned) and $d = -\nabla f(x)$ (the search direction is the negative gradient, steepest descent), strong Wolfe line search uses no more than $2 + 2\log_2\left(\frac{1}{c_2}\right)+\max\left(2 + 2\log_{1/\beta}\left(\frac{1}{TL}\right), \log_2\left(TL\right)\right)$ function evaluations per line search, whereas AELS requires no more than $1 + \max\left(\log_{1/\beta}\left(\frac{1}{TL}\right), 5 + \log_{1/\beta}\left(TL\right) \right)$ function evaluations per line search, where both are initialized with a starting step size of $T$. These rates are quite similar, differing only in the distribution of constants between the two cases (which correspond to whether the initial step size is too small or too large), and the base of the logarithm which is a design choice in each algorithm.

\section{Proofs of theoretical results}
\label{sec:proofs}

We now prove the bounds in Section~\ref{sec:theory}. 

\subsection{Convergence rates for general Armijo line searches}

\begin{proof}[Proof of Lemma~\ref{general armijo strongly convex rate}]
The convergence rates using the true gradient and using a finite differencing approximation both follow (via Corollary~\ref{directional derivative corollary}) from the more general rate: 
If $\norm{g(x) - \nabla f(x)} \leq \rho$, then
\begin{align}
f(x_{k+1}) - f_* \leq \min \Big\{& (1-m\tmin)^k(f(x_0) - f_*) + \frac{\tmax \rho L \norm{x_0 - x_*}}{m\tmin}, \nonumber \\&(1-\frac{m\tmin}{2})^k(f(x_0) - f_*) + \frac{\rho^2 \tmax}{m\tmin} \Big\}\;. 
\end{align}

We prove this more general rate by combining two bounds, the first of which follows a technique based on chapter 3 of \cite{recht-wright}.
\begin{align}
    f(x_{k+1}) - f_* &= f(x_k - t_k g(x_k)) - f_* \nonumber \\
    &\leq f(x_k) -f_* - \frac{t_k}{2}\norm{g(x_k)}^2 ~~\text{(since $t_k$ satisfies Armijo)} \nonumber \\
    &= f(x_k) -f_* - \frac{t_k}{2}\norm{\nabla f(x_k)}^2 - t_k(g(x_k) - \nabla f(x_k))^T\nabla f(x_k) \nonumber \\
    &~~~~~~~~ - \frac{t_k}{2}\norm{g(x_k) - \nabla f(x_k)}^2 \nonumber \\
    &\leq f(x_k) -f_* - mt_k(f(x_k) - f_*) - t_k(g(x_k) - \nabla f(x_k))^T\nabla f(x_k) \nonumber \\
    &\leq (1-m t_k)(f(x_k) - f_*) + t_k \rho \norm{\nabla f(x_k)} \nonumber \\
    &\leq (1 - m \tmin)(f(x_k) - f_*) + \tmax \rho L \norm{x_k - x_*} ~~\text{(by smoothness)}\;,
\end{align}
where the third to last line follows from strong convexity (Lemma~\ref{strongly convex property}). Then,
\begin{align}
    f(x_k) - f_* &\leq (1-m\tmin)^k(f(x_0) - f_*) + \sum_{i=0}^{k-1}(1-m\tmin)^i \tmax \rho L \norm{x_i - x_*} \nonumber \\
    &\leq (1-m\tmin)^k(f(x_0) - f_*) + \frac{\tmax \rho L \norm{x_0 - x_*}}{m\tmin}(1-(1-m\tmin)^k)\;,
\end{align}
where the last term captures the error introduced by approximating the gradient. This error term is bounded for any fixed $k$, and if $\tmin \in (0, \frac{2}{m})$, we can bound the last term by its limit as $k \to \infty$:
\begin{align}
    f(x_k) - f_* &\leq (1-m\tmin)^k(f(x_0) - f_*) + \frac{\tmax \rho L \norm{x_0 - x_*}}{m\tmin}\;.
\end{align}

Alternatively,
\begin{align}
    f(x_{k+1}) - f_* &= f(x_k - t_k g(x_k)) - f_* \nonumber \\
    &\leq f(x_k) -f_* - \frac{t_k}{2}\norm{g(x_k)}^2 ~~\text{(since $t_k$ satisfies Armijo)} \nonumber \\
    &\leq f(x_k) -f_* + \frac{t_k}{2}(\rho^2 - \frac{1}{2}\norm{\nabla f(x_k)}^2) \nonumber \\
    &\leq f(x_k) -f_* + \frac{t_k}{2}(\rho^2 - m(f(x_k) - f_*)) \nonumber \\
    &= (1-\frac{mt_k}{2})(f(x_k) - f_*) + \frac{\rho^2 t_k}{2} \nonumber \\
    &\leq (1-\frac{m\tmin}{2})(f(x_k) - f_*) + \frac{\rho^2 \tmax}{2} \nonumber \\
    &\leq (1-\frac{m\tmin}{2})^k(f(x_0) - f_*) + \frac{\rho^2 \tmax}{m\tmin}\;.
\end{align}
Combining these, we have
\begin{align}
f(x_{k+1}) - f_* \leq \min \Big\{& (1-m\tmin)^k(f(x_0) - f_*) + \frac{\tmax \rho L \norm{x_0 - x_*}}{m\tmin}, \nonumber \\&(1-\frac{m\tmin}{2})^k(f(x_0) - f_*) + \frac{\rho^2 \tmax}{m\tmin} \Big\}\;. 
\end{align}
We expect the first bound to be active early in optimization, when the error is large but decreasing rapidly, and the second bound to be active later in optimization, once the error is smaller and decreasing slightly more slowly.

Next, we prove the convergence rate for the random direction case, where $d =  (-f'_{v_k}(x_k) + \rho_k)v_k$ for an arbitrary unit vector $v_k$ and $\absval{\rho_k} \leq \frac{L\sigma}{2}$ (\ie $\norm{d}$ is a $\sigma$-finite differencing approximation of the directional derivative magnitude along $v_k$, using the bound of Lemma~\ref{directional derivative approximation}). Again, we combine two bounds.

Let $t_k$ be the step size used in step $k$, so $x_{k+1} = x_k + t_k (-f'_{v_k}(x_k) + \rho_k) v_k$. Using the approximated Armijo condition:
\begin{align}
f(x_{k+1}) &\leq f(x_k) - \frac{t_k}{2} (-f'_{v_k}(x_k) + \rho_k)^2 \nonumber \\
&= f(x_k) - \frac{t_k}{2} (f'_{v_k} (x_k))^2 + t_k\rho_k f'_{v_k}(x_k) - \frac{t_k}{2}\rho_k^2 \nonumber \\
&\leq f(x_k) - \frac{t_k}{2} (f'_{v_k} (x_k))^2 + t_k\rho_k f'_{v_k}(x_k) \nonumber \\
&\leq f(x_k) - \frac{t_k}{2} (f'_{v_k} (x_k))^2 + t_k\frac{L\sigma}{2}\absval{ f'_{v_k}(x_k)} \nonumber\\
&\leq f(x_k) - \frac{\tmin}{2} (f'_{v_k} (x_k))^2 + \tmax\frac{L^2\sigma\norm{x_k - x_*}}{2}\;,
\end{align}
where we used Lemma~\ref{directional derivative approximation} and Lipschitz gradient to bound $\rho_k f'_{d_k}(x_k)$. Alternatively, 
\begin{align}
f(x_{k+1}) &\leq f(x_k) - \frac{t_k}{2} (-f'_{v_k}(x_k) + \rho_k)^2 \nonumber \\
&\leq f(x_k) - \frac{t_k}{4} (f'_{v_k} (x_k))^2 + \frac{t_k}{2}\rho_k^2 \nonumber \\
&\leq f(x_k) - \frac{\tmin}{4} (f'_{v_k} (x_k))^2 + \frac{\tmax L^2\sigma^2}{8} \;.
\end{align}
Since both of these bounds must be satisfied, we have
\begin{align}
f(x_{k+1}) \leq f(x_k) - \max \Big\{& \frac{\tmin}{2} (f'_{v_k} (x_k))^2 - \frac{\tmax L^2\sigma\norm{x_k - x_*}}{2} \nonumber \\
& \frac{\tmin}{4} (f'_{v_k} (x_k))^2 - \frac{\tmax L^2\sigma^2}{8}  \Big\} \;.
\end{align}

Now, we take expectations over all the random directions $v_0, ..., v_k$:
\begin{align}
\E[ (f'_{v_k}(x_k))^2 ] &= \E\left[ \E \left[ (f'_{v_k}(x_k))^2 \mid v_0, ..., v_{k-1} \right] \right] \nonumber \\
&= \E\left[ \E \left[ (v_k^T \nabla f(x_k))^2 \mid v_0, ..., v_{k-1} \right] \right] \nonumber \\
&= \E\left[ (\nabla f(x_k))^T \E \left[ v_k v_k^T\right] \nabla f(x_k)  \right] \nonumber \\
&= \frac{1}{n}\E\left[ \norm{ \nabla f(x_k)}^2  \right] \;,
\end{align}
where $n$ is the dimension of the space and $v_k = \frac{z}{\norm{z}}$ for $z \sim \mathcal{N}(0, I_n)$.
Now our bound is: 
\begin{align}
\E[f(x_{k+1})] \leq \E[f(x_k)] - \max \Big\{& \frac{\tmin}{2n} \E\left[ \norm{ \nabla f(x_k)}^2  \right] - \frac{\tmax L^2\sigma\norm{x_k - x_*}}{2} \nonumber \\
& \frac{\tmin}{4n} \E\left[ \norm{ \nabla f(x_k)}^2  \right] - \frac{\tmax L^2\sigma^2}{8}  \Big\} \;.
\end{align}

Subtracting $f_*$ from each side, we have:
\begin{align}
\E[f(x_{k+1}) - f_*] &\leq \E[f(x_k) - f_*] - \max \Big\{\frac{\tmin}{2n} \E\left[ \norm{ \nabla f(x_k)}^2  \right] - \frac{\tmax L^2\sigma\norm{x_k - x_*}}{2}, \nonumber \\
&~\hspace{3.8cm} \frac{\tmin}{4n} \E\left[ \norm{ \nabla f(x_k)}^2  \right] - \frac{\tmax L^2\sigma^2}{8}  \Big\} \nonumber \\
&\leq \min \Big\{\E[f(x_k) - f_*]\Big(1 - \frac{m\tmin}{n} \Big) + \frac{\tmax L^2\sigma\norm{x_k - x_*}}{2}, \nonumber   \\
&~\hspace{1.2cm} \E[f(x_k) - f_*]\Big(1 - \frac{m\tmin}{2n} \Big) + \frac{\tmax L^2\sigma^2}{8} \Big\} \;.
\end{align}
where the last inequality follows from strong convexity (Lemma~\ref{strongly convex property}). Repeating over the $k$ steps, we have nearly linear convergence:
\begin{align}
& \hspace{-3cm}\E[f(x_k) - f_*] \leq \nonumber \\
\min\Big\{(f(x_0)-f_*)& \Big(1- \frac{m \tmin}{n} \Big)^k + \frac{\sigma L^2 \tmax n\norm{x_0 - x_*}}{2m \tmin}\Big(1- \left( 1-\frac{m \tmin}{n} \Big)^k \right), \nonumber \\
 (f(x_0)-f_*) &\Big(1- \frac{m \tmin}{2n} \Big)^k + \frac{\tmax L^2\sigma^2n}{4m\tmin}\Big(1- \Big( 1-\frac{m \tmin}{2n} \Big)^k \Big)  \Big\} \;,
\end{align}
where the second term in each bound captures the error introduced by approximating the directional derivative by finite differencing in each step. This error term is bounded for any number of iterations $k$, and since $\tmin \in (0, \frac{2n}{m})$ (because $\tmin$ satisfies the Armijo condition) it converges to $\frac{\sigma L^2 \tmax n \norm{x_0 - x_*}}{2m \tmin}$ or $\frac{\tmax L^2\sigma^2n}{4m\tmin}$, respectively, which can be managed by judicious choice of $\sigma$. As in the gradient case, we expect the first bound to be active towards the beginning of training when error is large, and the second bound to be active later in training when the error is small.

\end{proof}

Lemma~\ref{general armijo strongly convex rate} depends on $\tmin$, which in turn may depend on $t_a$, so we next prove bounds on $t_a$. Lemma~\ref{t_a bounds} will also be useful in the convergence analysis of AELS.

\begin{lemma}
\label{t_a bounds}
If $f$ is $m$-strongly convex with $L$-Lipschitz gradient and $d$ is a search (descent) direction, then
\begin{align}
t_a \in  \frac{-d^T\nabla f(x)}{\norm{d}^2} \left[\frac{1}{L}, \frac{1}{m} \right]\;.
\end{align}
Note that the first inequality requires only the Lipschitz gradient condition, and the second requires only the strong convexity condition. Without strong convexity (as $m \to 0$) we approach the one-sided bound $t_a \geq \frac{-d^T\nabla f(x)}{L\norm{d}^2}$.
\begin{proof}
Applying Lemma~\ref{taylor and lipschitz and strong convexity} to the case $t = t_a$, 
\begin{align}
f(x) + t_a d^T \nabla f(x) + \frac{mt_a^2}{2}\norm{d}^2 \leq f(x) + \frac{t_a}{2}d^T\nabla f(x) \leq f(x) + t_a d^T \nabla f(x) + \frac{Lt_a^2}{2}\norm{d}^2\;.\hspace{-0.1in}
\end{align}
Rearranging, we have:
\begin{align}
\frac{mt_a}{2}\norm{d}^2 \leq  \frac{-1}{2}d^T\nabla f(x) \leq \frac{Lt_a}{2}\norm{d}^2\;,
\end{align}
which implies the lemma.
\end{proof}
\end{lemma}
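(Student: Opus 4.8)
The plan is to sandwich $t_a$ using the two-sided quadratic bounds on $f(x+td)$ provided by Lemma~\ref{taylor and lipschitz and strong convexity}, evaluated at the single point $t = t_a$. By Definition~\ref{def:t_a}, $t_a$ is precisely the positive value at which $f(x+t_a d)$ equals the Armijo reference line $f(x) + \frac{t_a}{2} d^T\nabla f(x)$ (with $c_1 = \frac12$). So the strategy is to replace the middle quantity $f(x+t_a d)$ with this closed-form value inside both the strong-convexity lower bound and the Lipschitz upper bound, turning a statement about function values into a statement purely about $t_a$.

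Concretely, I would first write the two-sided inequality from Lemma~\ref{taylor and lipschitz and strong convexity} with $t = t_a$, namely
\[
f(x) + t_a d^T\nabla f(x) + \frac{m t_a^2}{2}\norm{d}^2 \le f(x+t_a d) \le f(x) + t_a d^T\nabla f(x) + \frac{L t_a^2}{2}\norm{d}^2,
\]
and then substitute the defining identity $f(x+t_a d) = f(x) + \frac{t_a}{2} d^T\nabla f(x)$ for the middle term. Canceling the common $f(x)$ and subtracting the common $t_a\, d^T\nabla f(x)$ from all three parts leaves
\[
\frac{m t_a^2}{2}\norm{d}^2 \le -\frac{t_a}{2} d^T\nabla f(x) \le \frac{L t_a^2}{2}\norm{d}^2.
\]

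Finally I would divide through by $t_a > 0$ (legitimate since $t_a$ is the positive solution) to obtain $\frac{m t_a}{2}\norm{d}^2 \le -\frac{1}{2} d^T\nabla f(x) \le \frac{L t_a}{2}\norm{d}^2$, and solve each side for $t_a$: the left inequality yields the upper bound $t_a \le \frac{-d^T\nabla f(x)}{m\norm{d}^2}$, and the right yields the lower bound $t_a \ge \frac{-d^T\nabla f(x)}{L\norm{d}^2}$, which together give exactly the claimed interval. This decomposition also explains the remark in the statement: the lower bound invokes only the Lipschitz upper half of Lemma~\ref{taylor and lipschitz and strong convexity} (so it persists as $m \to 0$), while the upper bound invokes only the strong-convexity lower half. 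There is no genuine obstacle here—the whole argument is a direct application of the sandwich from Lemma~\ref{taylor and lipschitz and strong convexity}. The only points requiring care are confirming $d^T\nabla f(x) < 0$ (guaranteed since $d$ is a descent direction) so that the interval endpoints are positive, and that dividing by $t_a > 0$ preserves the directions of both inequalities.
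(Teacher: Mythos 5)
Your proposal is correct and follows essentially the same argument as the paper: both substitute the defining identity $f(x+t_a d) = f(x) + \frac{t_a}{2} d^T\nabla f(x)$ into the two-sided bound of Lemma~\ref{taylor and lipschitz and strong convexity} at $t = t_a$, cancel common terms, divide by $t_a > 0$, and solve for $t_a$. Your explicit note on which half of the sandwich yields which bound matches the paper's remark as well.
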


Lemma~\ref{t_a bounds} gives upper and lower bounds on $t_a$ as a function of $\frac{d^T\nabla f(x)}{\norm{d}^2}$, which varies depending on the local behavior of the objective and the method for choosing a descent direction $d$. Accordingly, the next step is to bound $\frac{d^T\nabla f(x)}{\norm{d}^2}$ for each of the descent directions we consider.

\begin{lemma}
\label{gamma bounds}
If we set $d = -f'_v(x)v$ for an arbitrary unit vector $v$, including the case where $v = \frac{\nabla f(x)}{\norm{\nabla f(x)}}$ so $d = -\nabla f(x)$, then
\begin{align}
\frac{-d^T\nabla f(x)}{\norm{d}^2} &= 1\;.
\end{align}

If we set $d = -\nabla f(x) + \rho$ where $\norm{\rho} \leq \frac{L\sigma\sqrt n}{2}$ (\ie $d$ is a $\sigma$-finite differencing approximation of $-\nabla f(x)$, using the bound of Corollary~\ref{directional derivative corollary}), with $\sigma = \frac{2\tilde \sigma m \Delta}{L\sqrt n}$ and $\tilde \sigma < 1$, then
\begin{align}
\frac{-d^T\nabla f(x)}{\norm{d}^2} &\in \left[ \frac{1-\tilde\sigma}{(1+\tilde\sigma)^2}, \frac{1}{1-\tilde\sigma} \right]\;.
\end{align}

If we set $d = (-f'_v(x) + \rho)v$ for an arbitrary unit vector $v$ where $\absval{\rho} \leq \frac{L\sigma}{2}$ (\ie $\norm{d}$ is a $\sigma$-finite differencing approximation of the directional derivative magnitude along $v$, using the bound of Lemma~\ref{directional derivative approximation}), with $\sigma = \frac{2\tilde\sigma m\Delta}{L}$ and $\tilde\sigma < 1$, then
\begin{align}
\frac{-d^T\nabla f(x)}{\norm{d}^2} &\in \left[ \frac{1}{1+\tilde\sigma}, \frac{1}{1-\tilde\sigma} \right]\;.
\end{align}

\begin{proof}
If we set $d = -f'_v(x)v$ for an arbitrary unit vector $v$, 
then
\begin{align}
\frac{-d^T\nabla f(x)}{\norm{d}^2} &= \frac{f'_v(x) v^T\nabla f(x)}{(f'_v(x))^2} = 1\;.
\end{align}

If we set $d = -\nabla f(x) + \rho$ where $\norm{\rho} \leq \frac{L\sigma\sqrt n}{2}$ 
, with $\sigma = \frac{2\tilde \sigma m \Delta}{L\sqrt n}$ and $\tilde \sigma < 1$, then
\begin{align}
\frac{-d^T\nabla f(x)}{\norm{d}^2} &= \frac{(\nabla f(x) - \rho)^T\nabla f(x)}{\norm{\nabla f(x) - \rho}^2} \nonumber \\
&\in \left[\frac{\norm{\nabla f(x)}^2 - \norm{\nabla f(x)}\norm{\rho}}{\norm{\nabla f(x) - \rho}^2} , \frac{\norm{\nabla f(x)}}{\norm{\nabla f(x) - \rho}} \right] \nonumber \\
&\subseteq \left[ \frac{m\norm{x-x_*}(m\norm{x-x_*} - \tilde\sigma m \Delta)}{(m\norm{x-x_*} + \tilde\sigma m \Delta)^2}, \frac{m\norm{x-x_*}}{m\norm{x-x_*} - \tilde\sigma m \Delta} \right] \nonumber \\
&\subseteq \left[ \frac{1-\frac{\tilde\sigma \Delta}{\norm{x-x_*}}}{(1+\frac{\tilde\sigma\Delta}{\norm{x-x_*}})^2}, \frac{1}{1 - \frac{\tilde\sigma \Delta}{\norm{x-x_*}}} \right] \nonumber \\
&\subseteq \left[ \frac{1-\tilde\sigma}{(1+\tilde\sigma)^2}, \frac{1}{1-\tilde\sigma} \right]\;.
\end{align}

If we set $d = (-f'_v(x) + \rho)v$ for an arbitrary unit vector $v$ where $\absval{\rho} < \frac{L\sigma}{2}$ 
, with $\sigma = \frac{2\tilde\sigma m\Delta}{L}$ and $\tilde\sigma < 1$, then
\begin{align}
\frac{-d^T\nabla f(x)}{\norm{d}^2} &= \frac{(f'_v(x)-\rho)v^T\nabla f(x)}{(f'_v(x)-\rho)^2} = \frac{f'_v(x)}{f'_v(x)-\rho} \nonumber \\
&\in \left[ \frac{m\norm{x-x_*}}{m\norm{x-x_*} + m\tilde\sigma\Delta}, \frac{m\norm{x-x_*}}{m\norm{x-x_*}-m\tilde\sigma\Delta} \right] \nonumber \\
&\subseteq \left[ \frac{1}{1+\frac{\tilde\sigma\Delta}{\norm{x-x_*}}}, \frac{1}{1-\frac{\tilde\sigma\Delta}{\norm{x-x_*}}} \right] \nonumber \\
&\subseteq \left[ \frac{1}{1+\tilde\sigma}, \frac{1}{1-\tilde\sigma} \right]\;.
\end{align}
\end{proof}
\end{lemma}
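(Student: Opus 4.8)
The plan is to handle the three descent-direction constructions separately, in each case computing $\gamma \equiv \frac{-d^T\nabla f(x)}{\norm{d}^2}$ directly and then controlling the approximation error through a single relative quantity. For the exact case $d = -f'_v(x)v$ with $\norm{v}=1$, I would simply substitute: since $f'_v(x) = v^T\nabla f(x)$, the numerator is $-d^T\nabla f(x) = f'_v(x)\,v^T\nabla f(x) = (f'_v(x))^2$ and the denominator is $\norm{d}^2 = (f'_v(x))^2\norm{v}^2 = (f'_v(x))^2$, so the ratio is exactly $1$. This case needs no inequalities.

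For the approximate-gradient case $d = -\nabla f(x) + \rho$, I would first expand $-d^T\nabla f(x) = (\nabla f(x) - \rho)^T\nabla f(x) = \norm{\nabla f(x)}^2 - \rho^T\nabla f(x)$ and $\norm{d}^2 = \norm{\nabla f(x) - \rho}^2$. Applying Cauchy--Schwarz to $\rho^T\nabla f(x)$ in the numerator, and separately to $(\nabla f(x)-\rho)^T\nabla f(x) \le \norm{\nabla f(x)-\rho}\norm{\nabla f(x)}$ for the upper estimate, gives the two-sided bound $\gamma \in \left[\frac{\norm{\nabla f(x)}^2 - \norm{\nabla f(x)}\norm{\rho}}{\norm{\nabla f(x)-\rho}^2},\, \frac{\norm{\nabla f(x)}}{\norm{\nabla f(x)-\rho}}\right]$. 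The key simplification is to introduce the single relative error $r \equiv \norm{\rho}/\norm{\nabla f(x)}$: after bounding $\norm{\nabla f(x)-\rho}$ above and below by $\norm{\nabla f(x)} \pm \norm{\rho}$ via the triangle inequality and dividing through by $\norm{\nabla f(x)}$, the lower endpoint becomes $\frac{1-r}{(1+r)^2}$ and the upper endpoint $\frac{1}{1-r}$. I would then verify that $\frac{1-r}{(1+r)^2}$ is decreasing and $\frac{1}{1-r}$ is increasing on $[0,1)$, so both are controlled once $r$ is. Finally, strong convexity gives $\norm{\nabla f(x)} \ge m\norm{x-x_*}$, while $\norm{\rho} \le \frac{L\sigma\sqrt n}{2} = \tilde\sigma m\Delta$ by the choice of $\sigma$ and Corollary~\ref{directional derivative corollary}; combined with the running assumption $\norm{x-x_*}\ge\Delta$ this yields $r \le \frac{\tilde\sigma\Delta}{\norm{x-x_*}} \le \tilde\sigma$, from which the stated interval follows.

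The approximate-directional-derivative case $d = (-f'_v(x)+\rho)v$ is the scalar analogue. Here both the numerator and denominator carry a common factor $(f'_v(x)-\rho)$: $-d^T\nabla f(x) = (f'_v(x)-\rho)f'_v(x)$ and $\norm{d}^2 = (f'_v(x)-\rho)^2$, so the ratio collapses to $\frac{f'_v(x)}{f'_v(x)-\rho} = \frac{1}{1-\rho/f'_v(x)}$ (the cancellation is legitimate because the correct-sign assumption on the approximation keeps $f'_v(x)-\rho$ nonzero and of the same sign as $f'_v(x)$). I would then bound the relative error $\absval{\rho/f'_v(x)}$ using $\absval{\rho}\le \frac{L\sigma}{2} = \tilde\sigma m\Delta$ together with a strong-convexity lower bound on the relevant derivative magnitude, $\absval{f'_v(x)} \ge m\norm{x-x_*} \ge m\Delta$, to get $\absval{\rho/f'_v(x)} \le \tilde\sigma$; monotonicity of $\frac{1}{1-q}$ in $q$ then delivers $\gamma \in \left[\frac{1}{1+\tilde\sigma}, \frac{1}{1-\tilde\sigma}\right]$.

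The routine parts are the algebraic cancellations and the Cauchy--Schwarz and triangle-inequality estimates; the step I expect to require the most care is the reduction to the single relative-error variable together with the monotonicity check, since each substitution must be made in the direction that weakens (not strengthens) the bound, and the clean form of the endpoints depends on tracking which quantity is being pushed to its extreme. A secondary subtlety is justifying the lower bound on the gradient (respectively directional-derivative) magnitude through strong convexity and the execution assumption $\norm{x-x_*}\ge\Delta$, which is precisely what converts the absolute error bound on $\rho$ into the relative bound $r \le \tilde\sigma$ driving all three intervals.
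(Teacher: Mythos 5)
Your proposal is correct and follows essentially the same route as the paper's proof: direct algebraic computation of the ratio in each case, Cauchy--Schwarz on the cross term and the triangle inequality for $\norm{\nabla f(x) - \rho}$, the strong-convexity bound $\norm{\nabla f(x)} \geq m\norm{x-x_*}$, the choice of $\sigma$ converting $\norm{\rho}$ (resp.\ $\absval{\rho}$) into $\tilde\sigma m\Delta$, and the execution assumption $\norm{x-x_*} \geq \Delta$ to reach the final intervals. Your explicit relative-error variable $r = \norm{\rho}/\norm{\nabla f(x)}$ and the accompanying monotonicity check are just a repackaging of the paper's chain of set inclusions, which performs the same substitutions implicitly.
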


The constraints on $\sigma$ in Lemma~\ref{gamma bounds} guarantee that we choose a descent direction; in our analysis we assume they are satisfied. Although we commonly don't know $m$ and $L$ in practice, setting $\sigma$ to be a fixed small constant (\eg $10^{-8}$) suffices in our experiments. In the limit as $\sigma \to 0$, the bounds on $\frac{-d^T\nabla f(x)}{\norm{d}^2}$ using finite differencing approach $1$, the value using the true gradient or directional derivative.

\subsection{Approximately exact line search}

\begin{proof}[Proof of Invariant~\ref{approx exact invariant}]
We begin with some notation and observations. 

Let $h(s) = f(x + sd)$ for current iterate $x$ and search direction $d$. If $f$ is strictly unimodal (\eg if $f$ is strongly convex), then $h$ is decreasing for all $s \in [0, t_*)$ and increasing for all $s > t_*$. Therefore, if we can find some $s_1 < s_2 < s_3$ such that $h(s_1) \geq h(s_2)$ and $h(s_2) \leq h(s_3)$, then $t_* \in [s_1, s_3]$. AELS follows this methodology by always returning a step size $t$ for which $h(t) \geq h(\beta^{-1}t)$ and $h(\beta^{-1}t) \leq h(\beta^{-2}t)$.

We analyze AELS (Algorithm~\ref{alg:approximately exact line search with warm start}) step by step, beginning with the first If statement, which compares $h(0)$ and $h(T)$. There are three cases, as illustrated in Figure~\ref{fig:alg_execution_illustration}. 

If $h(T) > h(0)$, then we know $t_* < T$, and accordingly we choose to backtrack (setting $\alpha = \beta < 1$). Note that by convexity, our first backtracking step (with step size $\beta T$) in this scenario will always satisfy $h(\beta T) < h(T)$. We then repeatedly multiply the step size by $\beta$ until we reach a step size $t$ with $h(t) \geq h(\beta^{-1}t)$, at which point we are also guaranteed that $h(\beta^{-1}t) < h(\beta^{-2}t)$. AELS returns this $t$, which satisfies the desired invariant.

If instead $h(T) \leq h(0)$, we cannot be sure the relationship between $t_*$ and $T$, so we try to forward-track (setting $\alpha = \beta^{-1} > 1$). In the first forward-tracking step, we test a step size $\beta^{-1}T$. If $h(\beta^{-1}T) < h(T)$, we continue forward-tracking until we reach a step size $t$ with $h(t) \geq h(\beta t)$, at which point we are also guaranteed that $h(\beta t) < h(\beta^{2}t)$. AELS returns $\beta^2 t$, which satisfies the desired invariant.

The third and final case is if $h(T) \leq h(0)$ but $h(\beta^{-1}T) \geq h(T)$, meaning that we tried forward-tracking for one loop iteration and immediately the objective value increased. We then follow the second If statement, reset the step size to $T$, and know that $h(T) < h(\beta^{-1}T)$. We backtrack by factors of $\beta$ until we reach a step size $t$ with $h(t) > h(\beta^{-1}t)$, at which point we are also guaranteed that $h(\beta^{-1}t) \leq h(\beta^{-2}t)$. AELS returns this $t$, which satisfies the desired invariant.
\end{proof}

To bound the iteration complexity of AELS, we must first bound $t_*$.

\begin{lemma}
\label{t_* bounds}
If $f$ is $m$-strongly convex with $L$-Lipschitz gradient and $d$ is a descent direction, then
\begin{align}
t_* \in \frac{-d^T\nabla f(x)}{m\norm{d}^2} \left[1-\sqrt{1-\frac{m}{L}}, 1+\sqrt{1-\frac{m}{L}}  \right]\;.
\end{align}
Note that without strong convexity (as $m \to 0$) we approach the one-sided bound $t_* \geq \frac{-d^T\nabla f(x)}{2L\norm{d}^2}$.
\begin{proof}
By optimality of $t_*$,
\begin{align}
    f(x + t_* d) &\leq f\left(x - \frac{d^T\nabla f(x)}{L\norm{d}^2} d\right) \nonumber \\
    &\leq f(x) - \frac{(d^T\nabla f(x))^2}{2L\norm{d}^2}\;,
\end{align}
where the last line follows from Lemma~\ref{t_a bounds}.
Combining this with strong convexity (Definition~\ref{strong convexity}),
\begin{align}
f(x) + t_* d^T\nabla f(x) + \frac{mt_*^2}{2}\norm{d}^2 \leq f(x) - \frac{(d^T\nabla f(x))^2}{2L\norm{d}^2}\;,
\end{align}
which implies that
$
\frac{mt_*^2}{2}\norm{d}^2 + t_* d^T\nabla f(x) + \frac{(d^T\nabla f(x))^2}{2L\norm{d}^2}\leq 0.
$
This is satisfied for $t_*$ in the interval $\frac{-d^T\nabla f(x)}{m\norm{d}^2} \left[1-\sqrt{1-\frac{m}{L}}, 1+\sqrt{1-\frac{m}{L}}  \right]$.
\end{proof}
\end{lemma}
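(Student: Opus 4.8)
The plan is to bound $t_*$ by comparing the optimal line-search value against a single well-chosen reference step size for which the Lipschitz descent bound simplifies cleanly, and then to close the two-sided estimate using strong convexity. First I would exploit optimality of $t_*$: since $t_*$ minimizes $f(x+td)$ along the ray, we have $f(x+t_*d)\le f(x+td)$ for \emph{every} $t>0$, so I am free to substitute any convenient comparison step. The natural choice is the step $t=\frac{-d^T\nabla f(x)}{L\norm{d}^2}$ appearing in Lemma~\ref{t_a bounds}, because inserting it into the Lipschitz upper bound of Definition~\ref{Lipschitz gradient} (equivalently the right-hand inequality of Lemma~\ref{taylor and lipschitz and strong convexity}) makes the quadratic-in-$t$ telescope to exactly $f(x)-\frac{(d^T\nabla f(x))^2}{2L\norm{d}^2}$. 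Optimality then transfers this decrease to the true minimizer, giving $f(x+t_*d)\le f(x)-\frac{(d^T\nabla f(x))^2}{2L\norm{d}^2}$.

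Next I would lower-bound $f(x+t_*d)$ using $m$-strong convexity (Definition~\ref{strong convexity}) evaluated from $x$ to $x+t_*d$, yielding $f(x+t_*d)\ge f(x)+t_*\,d^T\nabla f(x)+\frac{mt_*^2}{2}\norm{d}^2$. Chaining this against the upper bound from the previous paragraph and cancelling the common $f(x)$ produces a single-variable quadratic inequality in $t_*$, namely $\frac{m\norm{d}^2}{2}t_*^2+(d^T\nabla f(x))\,t_*+\frac{(d^T\nabla f(x))^2}{2L\norm{d}^2}\le 0$. Because the leading coefficient $\frac{m\norm{d}^2}{2}$ is positive, the solution set of this inequality is precisely the closed interval between the two real roots, so any valid $t_*$ must lie in that interval; this is the structural heart of the argument.

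The final step is purely computational: I would apply the quadratic formula and observe that the discriminant factors as $(d^T\nabla f(x))^2\bigl(1-\tfrac{m}{L}\bigr)$, whose square root is $-d^T\nabla f(x)\sqrt{1-\tfrac{m}{L}}$ once we use $d^T\nabla f(x)<0$. The roots then simplify to $\frac{-d^T\nabla f(x)}{m\norm{d}^2}\bigl(1\pm\sqrt{1-\tfrac{m}{L}}\bigr)$, matching the claimed endpoints exactly. I do not expect a genuine obstacle here; the only points demanding care are recognizing that the $\tfrac{1}{L}$ comparison step (rather than $t_a$ itself) is what makes the Lipschitz bound collapse, and tracking the sign of $d^T\nabla f(x)$ so the interval endpoints come out correctly ordered and positive. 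The degenerate limit $m\to 0$ can be handled separately by noting the quadratic becomes linear, recovering the stated one-sided bound $t_*\ge\frac{-d^T\nabla f(x)}{2L\norm{d}^2}$.
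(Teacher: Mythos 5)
Your proposal is correct and follows essentially the same route as the paper's proof: compare $f(x+t_*d)$ against the reference step $\frac{-d^T\nabla f(x)}{L\norm{d}^2}$ to obtain the decrease $\frac{(d^T\nabla f(x))^2}{2L\norm{d}^2}$, chain with the strong-convexity lower bound, and solve the resulting quadratic inequality in $t_*$. The only cosmetic difference is that you justify the reference-step decrease by plugging directly into the Lipschitz inequality of Lemma~\ref{taylor and lipschitz and strong convexity}, whereas the paper cites Lemma~\ref{t_a bounds}; both amount to the same computation.
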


We can now prove Theorem~\ref{approx exact strongly convex iteration rate}, which bounds the iteration complexity of AELS.

\begin{proof}[Proof of Theorem~\ref{approx exact strongly convex iteration rate}]
Let $f$ be $m$-strongly convex with $L$-Lipschitz gradient. Let $x_{k+1} = x_k - t_k g(x_k)$, where $t_k$ is chosen according to Algorithm~\ref{alg:approximately exact line search with warm start} with search direction $d_k = -g(x_k)$. Let $\gamma_k = \frac{g(x_k)^T\nabla f(x_k)}{\norm{g(x_k)}^2}$.

We begin with the case where $\norm{g(x) - \nabla f(x)} \leq \rho$, which occurs when either $g(x_k)$ is the true gradient $\nabla f(x_k)$ or when $g(x_k)$ is a finite differencing (or other sufficiently accurate) approximation of the gradient. Let $t_a$ be the step size that satisfies the Armijo condition with equality (with gradient approximation $g(x_k)$ and direction $-g(x_k)$) and $t_*$ be the exact line search step size (in direction $-g(x_k)$). We begin with the observation that if $t_k > t_a$, it must be because $f(x_k - t_k g(x_k)) \leq f(x_k - t_a g(x_k))$. Then,
\begin{align}
f(x_{k+1}) - f_* &\leq f(x_k) - f_* - \frac{\min(t_k, t_a)}{2}\norm{g(x_k)}^2 \;,
\end{align}
and we can follow exactly the same steps as in the proof of Lemma~\ref{general armijo strongly convex rate}, replacing $t_k$ with $\min(t_k, t_a)$. By combining Lemma~\ref{t_a bounds}, Invariant~\ref{approx exact invariant}, and Lemma~\ref{t_* bounds}, we can bound $\min(t_k, t_a) \in \gamma_k\left[ \frac{\beta^2}{m}(1-\sqrt{1-\frac{m}{L}}), \frac{1}{m}\right]$, with $\gamma_k \in [\gammamin, \gammamax]$ according to Lemma~\ref{gamma bounds}. Following this through, we have:

\begin{align}
& \hspace{-1cm} f(x_{k+1}) - f_* \leq \nonumber \\
\min \Big\{& \Big(1-\gammamin\beta^2\Big(1-\sqrt{1-\frac{m}{L}}\Big)\Big)^k(f(x_0) - f_*) + \frac{\gammamax \rho L \norm{x_0 - x_*}}{m\gammamin\beta^2\Big(1-\sqrt{1-\frac{m}{L}}\Big)}, \nonumber \\
&\Big(1-\frac{1}{2}\gammamin\beta^2\Big(1-\sqrt{1-\frac{m}{L}}\Big)\Big)^k(f(x_0) - f_*) + \frac{\rho^2 \gammamax}{m\gammamin\beta^2\Big(1-\sqrt{1-\frac{m}{L}}\Big)} \Big\}\;. 
\end{align}

Combining this bound with Lemma~\ref{gamma bounds} and Corollary~\ref{directional derivative corollary} yields the stated bounds using the true gradient and a finite differencing approximation. 

Next, we prove the bound for the case where $g(x_k) = \mu_k v_k$, where $\mu_k$ is a finite differencing approximation of $f'_{v_k}(x_k)$ for a uniformly random unit vector $v_k$. For Algorithm~\ref{alg:approximately exact line search with warm start}, all we need to assume about $\mu_k$ is that it has the same sign as $f'_{v_k}(x_k)$, so that $-g(x_k)$ is a descent direction. Let $t_k$ be the step size chosen in step $k$, so $x_{k+1} = x_k - t_k \mu_k v_k$. We begin by introducing some auxiliary variables:
\begin{itemize}
\item $s_k = t_k\absval{\mu_k}$, so $x_{k+1} = x_k - s_k v_k \text{sign}(f'_{v_k}(x_k))$. 

\item $s_a = t_a\absval{f'_{v_k}(x_k)}$, where $t_a$ satisfies the Armijo condition (Definition~\ref{armijo}) with equality (for $d = -f'_{v_k}(x_k)v_k$), so $f(x_k - s_a v_k \text{sign}(f'_{v_k}(x_k)) = f(x_k) - \frac{s_a}{2}\absval{f'_{v_k}(x_k)}$. Since by Lemma~\ref{t_a bounds} $t_a \in [\frac{1}{L}, \frac{1}{m}]$, $s_a \in [\frac{1}{L}\absval{f'_{v_k}(x_k)}, \frac{1}{m}\absval{f'_{v_k}(x_k)}]$.

\item $s_* = t_*\absval{f'_{v_k}(x_k)}$, where $t_*$ is an exact line search step size (along direction $d = -f'_{v_k}(x_k)v_k$), so $f(x_k - s_* v_k \text{sign}(f'_{v_k}(x_k)) = \min_s f(x_k - s v_k \text{sign}(f'_{v_k}(x_k))$. Since by Lemma \ref{t_* bounds}~ $t_* \in \left[\frac{1}{m}\left(1-\sqrt{1-\frac{m}{L}}\right), \frac{1}{m}\left(1+\sqrt{1-\frac{m}{L}}\right) \right]$, we have $s_* \in \left[\frac{1}{m}\left(1-\sqrt{1-\frac{m}{L}}\right)\absval{f'_{v_k}(x_k)}, \frac{1}{m}\left(1+\sqrt{1-\frac{m}{L}}\right)\absval{f'_{v_k}(x_k)} \right]$.
\end{itemize}
Rewriting Invariant~\ref{approx exact invariant} in this notation, we have that $s_k \in [\beta^2 s_*, s_*]$. One of the following cases must hold:
\begin{itemize}
\item $s_k \leq s_a$: Then $f(x_k - s_k v_k \text{sign}(f'_{v_k}(x_k))) \leq f(x_k) - \frac{s_k}{2}\absval{f'_{v_k}(x_k)}$.
\item $s_k > s_a$: Then $f(x_k - s_k v_k \text{sign}(f'_{v_k}(x_k))) \leq f(x_k) - \frac{s_a}{2}\absval{f'_{v_k}(x_k)}$.
\end{itemize}
Combining these cases, we have:

\begin{align}
f(x_{k+1}) 
&\leq f(x_k) - \frac{\min(s_k, s_a)}{2}\absval{f'_{v_k}(x_k)} \nonumber \\
&\leq f(x_k) - \frac{1}{2}\min\left(\frac{\beta^2}{m}\left(1-\sqrt{1-\frac{m}{L}}\right)\absval{f'_{v_k}(x_k)}, \frac{1}{L}\absval{f'_{v_k}(x_k)} \right) \absval{f'_{v_k}(x_k)} \nonumber \\
&= f(x_k) - \frac{1}{2}\min\left(\frac{\beta^2}{m}\left(1-\sqrt{1-\frac{m}{L}}\right), \frac{1}{L}\right) (f'_{v_k}(x_k))^2 \nonumber \\
&\leq f(x_k) - \frac{\beta^2}{2m}\left(1-\sqrt{1-\frac{m}{L}}\right) (f'_{v_k}(x_k))^2\;,
\end{align}
where we plugged in our bounds on $s_k$ and $s_a$ and then simplified by noticing that for any $m \in (0, L]$,  $\frac{\beta^2}{m}\left(1-\sqrt{1-\frac{m}{L}}\right) \leq \frac{1}{L}$.
Now, we take expectations over all the random directions $v_0, ..., v_k$ and follow the same steps as in the proof of the random direction portion of Lemma~\ref{general armijo strongly convex rate}:
\begin{align}
\E[f(x_{k+1})] &\leq \E[f(x_k)] - \frac{\beta^2}{2m}\left(1-\sqrt{1-\frac{m}{L}}\right)\E[ (f'_{v_k}(x_k))^2 ] \nonumber \\
&= \E[f(x_k)] - \frac{\beta^2}{2mn}\left(1-\sqrt{1-\frac{m}{L}}\right) \E\left[ \norm{ \nabla f(x_k)}^2  \right]\;,
\end{align}
where $n$ is the dimension of the space and $v_k = \frac{z}{\norm{z}}$ for $z \sim \mathcal{N}(0, I_n)$. Subtracting $f_*$ from each side, we have:
\begin{align}
\E[f(x_{k+1}) - f_*] &\leq \E[f(x_k) - f_*] - \frac{\beta^2}{2mn}\left(1-\sqrt{1-\frac{m}{L}}\right) \E\left[ \norm{ \nabla f(x_k)}^2  \right] \nonumber \\
&\leq \E[f(x_k) - f_*] \left(1- \frac{\beta^2}{n}\left(1-\sqrt{1-\frac{m}{L}}\right)  \right)\;,
\end{align}
where the last inequality follows from strong convexity (Lemma~\ref{strongly convex property}). Repeating over the $k$ steps, we have linear convergence:
\begin{align}
\E[f(x_k) - f_*] &\leq (f(x_0)-f_*) \left(1- \frac{\beta^2}{n}\left(1-\sqrt{1-\frac{m}{L}}\right)  \right) ^k\;.
\end{align}
\end{proof}

Finally, we prove Theorem~\ref{approx exact strongly convex function rate}, which bounds the function evaluation complexity of AELS (Algorithm~\ref{alg:approximately exact line search with warm start}).

\begin{proof}[Proof of Theorem~\ref{approx exact strongly convex function rate}]
Let $t_k$ denote the step size taken in step $k$, where by convention we set $t_{-1} = T_0$. The initial step size in step $k$ is then $\beta^{-1}t_{k-1}$. We ignore the initial evaluation of $f(x_k)$ in each step as this can be reused between steps (except for the first step). In step $k$, AELS first makes one function evaluation to compute $f(x_k + \beta^{-1}t_{k-1} d_k)$. Then, there are three cases (as in the proof of Invariant~\ref{approx exact invariant}, whose $h$ notation we reuse). 

If $h(\beta^{-1}t_{k-1}) > h(0)$, then we repeatedly multiply the step size by $\beta$ (making one function evaluation for each factor of $\beta$) until we reach a step size $t$ with $t \in [\beta^2 t_*, t_*]$, which is used as $t_k$. If $p$ is the number of function evaluations in this loop, then $t_k = \beta^{p-1}t_{k-1}$, so $p \leq 2 + \ceil*{\log_{1/\beta}\left(\frac{t_{k-1}}{t_*}\right)}$. 

If instead $h(\beta^{-1}t_{k-1}) \leq h(0)$, we try to forward-track (setting $\alpha = \beta^{-1} > 1$). In the first forward-tracking step, we test a step size $\beta^{-2}t_{k-1}$. If $h(\beta^{-2}t_{k-1}) < h(\beta^{-1}t_{k-1})$, we continue multiplying the step size by $\beta^{-1}$ (making one function evaluation for each factor of $\beta^{-1}$) until we reach a step size $t$ with $t \in [t_*, \beta^{-2} t_*]$, and then use $t_k = \beta^2 t$. If $q$ is the number of function evaluations in this loop, then $t_k = \beta^{-q+1}t_{k-1}$, so $q \leq \ceil*{\log_{1/\beta}\left(\frac{t_*}{t_{k-1}}\right)}$.

The third and final case is if $h(\beta^{-1}t_{k-1}) \leq h(0)$ but $h(\beta^{-2}t_{k-1}) \geq h(\beta^{-1}t_{k-1})$, meaning that we tried forward-tracking for one loop iteration and immediately the objective value increased. We then follow the second If statement, reset the step size to $\beta^{-1}t_{k-1}$, and know that $h(\beta^{-1}t_{k-1}) \leq h(\beta^{-2}t_{k-1})$. We backtrack by factors of $\beta$ until we reach a step size $t$ with $t \in [\beta^2 t_*, t_*]$, which is used as $t_k$. We used one function evaluation to evaluate $h(\beta^{-2}t_{k-1})$, and then $r$ function evaluations to backtrack (one per loop iteration). Then $t_k = \beta^{r-1}t_{k-1}$, so $r \leq 2 + \ceil*{\log_{1/\beta}\left(\frac{t_{k-1}}{t_*}\right)}$ and the number of additional function evaluations is $\leq 3 + \ceil*{\log_{1/\beta}\left(\frac{t_{k-1}}{t_*}\right)}$.

Combining these cases, the number of function evaluations in step $k$ is
\begin{align}
n_k &\leq 1 + \max\left(\ceil*{\log_{1/\beta}\left(\frac{t_*}{t_{k-1}}\right)}, 3 + \ceil*{\log_{1/\beta}\left(\frac{t_{k-1}}{t_*}\right)}\right)\;.
\end{align}

For $k=0$, $t_{k-1} = T_0$, so
\begin{align}
n_0 &\leq 1 + \ceil*{\log_{1/\beta}\max\left(\frac{\gamma_0(1+\sqrt{1-\frac{m}{L}})}{mT_0}, \frac{mT_0}{\beta^3\gamma_0(1-\sqrt{1-\frac{m}{L}})}\right)}\;.
\end{align}

For $k > 0$, $t_{k-1} \in \frac{\gamma_{k-1}}{m}\left[\beta^2\left(1-\sqrt{1-\frac{m}{L}}\right), 1+\sqrt{1-\frac{m}{L}} \right]$ (Lemma~\ref{t_* bounds}), so
\begin{align}
n_k &\leq 4 + \ceil*{\log_{1/\beta}\left(\frac{\gammamax(1+\sqrt{1-\frac{m}{L}})}{\gammamin(1-\sqrt{1-\frac{m}{L}})}\right)}\;.
\end{align}

We can now bound the total number of function evaluations $N_k$ in the first $k$ steps:
\begin{align}
N_k &= 1 + n_0 + \sum_{i=1}^{k-1} n_k \nonumber \\
&\leq 2 + \ceil*{\log_{1/\beta}\max\left(\frac{\gamma_0(1+\sqrt{1-\frac{m}{L}})}{mT_0}, \frac{mT_0}{\beta^3\gamma_0(1-\sqrt{1-\frac{m}{L}})}\right)} \nonumber \\
&~~~~~~~~ + (k-1)\left(4 + \ceil*{\log_{1/\beta}\left(\frac{\gammamax(1+\sqrt{1-\frac{m}{L}})}{\gammamin(1-\sqrt{1-\frac{m}{L}})}\right)} \right) \nonumber \\
&\leq k\left(4 + \ceil*{\log_{1/\beta}\left(\frac{\gammamax(1+\sqrt{1-\frac{m}{L}})}{\gammamin(1-\sqrt{1-\frac{m}{L}})}\right)} \right) \nonumber \\
&~~~~~~~~ + \ceil*{\log_{1/\beta}\max\left(\frac{\beta^2\gammamin(1-\sqrt{1-\frac{m}{L}})}{mT_0}, \frac{mT_0}{\beta\gammamax(1+\sqrt{1-\frac{m}{L}})}\right)}\;.
\end{align}
\end{proof}

\subsection{Strong Wolfe line search}
\label{sec:wolfe_feval}

We bound the number of function (and potentially directional derivative) evaluations used by the strong Wolfe condition line search presented in Algorithms 3.5 and 3.6 of Nocedal and Wright \cite{nocedal2006line}, with the simplification that successive trial step lengths in Algorithm 3.5 are chosen by scaling by a factor $\beta^{-1}$, where $\beta \in (0, 1)$, and that interpolation in Algorithm 3.6 is done by bisection. 

We reproduce these slightly simplified algorithms here for convenience, using the notation that $h$ is the objective function $f$ restricted to the line, and $\alpha_i$ is the candidate step size in iteration $i$ of Algorithm~\ref{alg:wolfe}.
\SetKwFor{Loop}{Loop}{}{EndLoop}
\SetKwFor{Break}{Break}{}{}
\noindent
\begin{algorithm2e}[h]
\DontPrintSemicolon
\KwIn{$h, \alpha_0=0, \alpha_1>0, \beta \in (0,1), c_1 \in(0,1), c_2 \in (c_1,1)$}
$i \gets 1$\;
\Loop{}{
 Evaluate $h(\alpha_i)$\;
 \If{$h(\alpha_i) > h(0) + c_1\alpha_ih'(0)$ or $[h(\alpha_i) \geq h(\alpha_{i-1})$ and $i>1]$}{
 	$\alpha_* \gets$~\textbf{Zoom}$(\alpha_{i-1}, \alpha_i)$\;
	\Break{}{}
 }
 Evaluate $h'(\alpha_i)$\;
 \If{$\vert h'(\alpha_i)\vert \leq -c_2h'(0)$}{
 	$\alpha_* \gets \alpha_i$\;
	\Break{}{}
 }
 \If{$h'(\alpha_i) \geq 0$}{
 	$\alpha_* \gets$~\textbf{Zoom}$( \alpha_i, \alpha_{i-1})$\;
	\Break{}{}
 }
 $\alpha_{i+1} \gets \beta^{-1}\alpha_i$\;
 $i \gets i+1$\;
}
\Return{$\alpha_*$}
\caption{Strong Wolfe Line Search: from Algorithm 3.5 in \cite{nocedal2006line}}
\label{alg:wolfe}
\end{algorithm2e}

\noindent
\begin{algorithm2e}[h]
\DontPrintSemicolon
\KwIn{$\alpha_{\text{lo}}, \alpha_{\text{hi}}$}
\Loop{}{
 $\alpha_j \gets (\alpha_{\text{lo}} + \alpha_{\text{hi}})/2$\;
 Evaluate $h(\alpha_i)$\;
 \If{$h(\alpha_j) > h(0) + c_1\alpha_j h'(0)$ or $h(\alpha_j) \geq h(\alpha_{\text{lo}})$}{
 	$\alpha_{\text{hi}} \gets \alpha_j$\;
 }
 \Else{
 Evaluate $h'(\alpha_j)$\;
 \If{$\vert h'(\alpha_j)\vert \leq -c_2h'(0)$}{
 	$\alpha_* \gets \alpha_j$\;
	\Break{}{}
 }
 \If{$h'(\alpha_j)(\alpha_{\text{hi}}-\alpha_{\text{lo}}) \geq 0$}{
 	$\alpha_{\text{hi}} \gets \alpha_{\text{lo}}$\;
 }
 $\alpha_{\text{lo}} \gets \alpha_j$\;
 }
}
\Return{$\alpha_*$}
\caption{Zoom: from Algorithm 3.6 in \cite{nocedal2006line}}
\label{alg:zoom}
\end{algorithm2e}

\begin{proof}[Proof of Lemma~\ref{Wolfe function complexity}]

We begin by bounding the number of iterations of Algorithm~\ref{alg:wolfe}, the outer line search function. 
We use the same $h$ notation as in the proof of Invariant~\ref{approx exact invariant} and Theorem~\ref{approx exact strongly convex function rate}.
We also borrow the notation of \cite{nocedal2006line}, where $\alpha_i$ denotes the candidate step size in iteration $i$ of the main loop in Algorithm~\ref{alg:wolfe}, so $\alpha_1=T$, the initial step size. 

Let $p$ be the number of times we enter the main loop of Algorithm~\ref{alg:wolfe}, such that on the $p$th iteration we either terminate successfully or call Algorithm~\ref{alg:zoom}. Then $\alpha_p = \beta^{1-p}\alpha_1$, and we know that $\alpha_{p-1}$ violates the strong Wolfe curvature condition. Then either $p = 1$ or  $\alpha_{p-1} = \beta^{2-p}\alpha_1 < t_w \leq \frac{-d^T\nabla f(x)}{m\norm{d}^2}$, so $p \leq \max(2 + \log_{1/\beta}\left(-\frac{d^T\nabla f(x)}{m\alpha_1 \norm{d}^2} \right), 1)$. Each iteration of this outer loop requires one function evaluation and one directional derivative evaluation, the latter of which may be averted on the last iteration if $\alpha_p$ is sufficiently large as to either violate the Armijo sufficient decrease condition or exceed the true line search minimizer. 

Finally, we must bound the number of function and directional derivative evaluations in Algorithm~\ref{alg:zoom}. Let $\alpha_L$ and $\alpha_R$ be the initial parameters passed to Algorithm~\ref{alg:zoom}, denoting the left and right endpoints of the active search window for $\alpha$ (these are renamings of the stated inputs $\alpha_{\text{lo}}$ and $\alpha_{\text{hi}}$, which denote these search endpoints ordered by their objective values). We consider Algorithm~\ref{alg:zoom} in two stages; in the first stage, the first If is repeatedly followed until the right endpoint of the search window satisfies the Armijo sufficient decrease condition. Let $q$ be the number of iterations in this first stage. Then $q < \max \left(0, \log_2\left( \frac{\alpha_R - \alpha_L}{t_a - \alpha_L}\right) \right)$, where $t_a$ is the step size that satisfies the Armijo condition with equality. Note that if $p=1$ then $\alpha_R - \alpha_L = \alpha_1$ and $q > 0$; if $p>1$ then $\alpha_R - \alpha_L = \alpha_1 \left( \beta^{1-p}-\beta^{2-p} \right)$ and $q=0$. Using Lemma~\ref{t_a bounds} we can bound $q < \max \left(0, \log_2\left(\frac{-\alpha_1L\norm{d}^2}{d^T\nabla f(x)}\right)\right)$. Each iteration in this stage requires only a function evaluation (no directional derivatives). 

Once the entire search window satisfies the Armijo condition, Algorithm~\ref{alg:zoom} behaves like binary search, using directional derivatives as needed to determine which half of the search window should remain active. If the bisection point has the lowest function value seen so far, then $h'$ is computed to determine the recurrence interval; otherwise only an evaluation of $h$ is necessary. Regardless of this distinction, however, every iteration of Algorithm~\ref{alg:zoom} decreases the size of the active search window by a factor of 2, terminating when a trial point satisfies the strong Wolfe curvature condition. Let $\alpha_L$ and $\alpha_{\hat R}$ denote the left and right endpoints of the search window at the beginning of this phase of Algorithm~\ref{alg:zoom} ($\alpha_{\hat R}$ may be equal to $\alpha_R$ or smaller, depending on $q$). We know that both of $\alpha_L$ and $\alpha_{\hat R}$ satisfy the Armijo condition, so $\alpha_{\hat R} - \alpha_L \leq \frac{-d^T\nabla f(x)}{m\norm{d}^2}$. Let $r$ be the number of iterations in this phase of Algorithm~\ref{alg:zoom}, and $A$ be the difference between the largest and smallest step sizes that satisfy the curvature condition. Then $r < \max \left( 0, 1 + \log_2 \left(\frac{\alpha_{\hat R} - \alpha_L}{A} \right) \right)$. Finally, we can bound $A$ using Lipschitz gradient: $A \geq \frac{-2c_2d^T\nabla f(x)}{L\norm{d}^2}$. Putting these together, we have $r < 1 + \log_2 \left(\frac{L}{2mc_2} \right)$. In the worst case, each of these $r$ iterations may require evaluation of both the function and directional derivative.

The worst case total number of directional derivative evaluations is upper bounded by $p + r < \max(2 + \log_{1/\beta}\left(-\frac{d^T\nabla f(x)}{m\alpha_1 \norm{d}^2} \right), 1) + 1 + \log_2 \left(\frac{L}{2mc_2} \right)$. The worst case total number of function evaluations is upper bounded by $p + q + r < \max(2 + \log_{1/\beta}\left(-\frac{d^T\nabla f(x)}{m\alpha_1 \norm{d}^2} \right), 1) + \max \left(0, \log_2\left(\frac{-\alpha_1L\norm{d}^2}{d^T\nabla f(x)}\right)\right) + 1 + \log_2 \left(\frac{L}{2mc_2} \right) = \max(2 + \log_{1/\beta}\left(-\frac{d^T\nabla f(x)}{m\alpha_1 \norm{d}^2} \right), 1) + \log_2\left(\max \left(\frac{L}{mc_2}, \frac{-\alpha_1 L^2\norm{d}^2}{mc_2d^T\nabla f(x)} \right) \right)$.
\end{proof}

Note that the directional derivative is always computed at the same location as a function evaluation; if the directional derivative is estimated by finite differencing, this requires only one additional function evaluation. Of course, the directional derivative can also be computed using the full gradient, if that is available and efficient to evaluate.

\section{Empirical results}
\label{sec:experiments}

We evaluate the empirical performance of approximately exact line search and baseline approaches in three settings: deterministic gradient descent for logistic regression, minibatch (stochastic) gradient descent for logistic regression, and derivative-free optimization (DFO). The objective function for logistic regression is strongly convex (due to a regularizer) with Lipschitz gradient, but these assumptions do not hold on the DFO benchmark problems \cite{more2009benchmarking}. 

All experiments are run on a 4 socket Intel Xeon CPU E7-8870 machine with 18 cores per socket and 1TB of DRAM. We run experiments in parallel, ensuring that at most one trial is run concurrently on each core.

\subsection{Logistic regression}
Our logistic regression experiments use the following objective function (with $\lambda = \frac{1}{N}$): 
\begin{align}
f(x) = \frac{\lambda}{2} x^Tx + \frac{1}{N}\sum_{i=1}^N \log(1 + \exp(-y_i(x^Tz_i)))\;,
\end{align}
where $x$ is a vector of parameters to be optimized, $N$ is the number of data points, and each data point has covariates $z_i$ and binary label $y_i$. This objective is commonly used in practice, has $L$-Lipschitz gradient with $L \leq \lambda + 2\lambdamax(\frac{1}{N}\sum_{i=1}^N z_i z_i^T)$ (where $\lambdamax(\cdot)$ denotes the maximum eigenvalue of a positive semidefinite matrix), and is $m$-strongly convex with $m \geq \lambda$. 

We define a benchmark set of logistic regression objectives comprised of the three datasets summarized in Table~\ref{table:datasets}, where we append $1$ to each covariate vector to allow for a bias in the model (this extra dimension is not included in Table~\ref{table:datasets}). For each problem, we use the origin as a standard initial point $x_0$. We estimate a Barzilai-Borwein \cite{bb} step size $\tbb$ there (using the full dataset), and include each experiment using various initial step sizes $T_0$: $0.01\tbb$, $0.1\tbb$, $\tbb$, $10\tbb$, and $100\tbb$. We include this range of initial step sizes to highlight the insensitivity of AELS to this parameter. Our code is available at \url{https://github.com/modestyachts/AELS}.

\begin{table}[h!]
  \begin{center}
    \caption{Datasets used for logistic regression experiments.}
    \label{table:datasets}
    \begin{tabular}{l|r|r}
	Dataset & Data points & Variables \\
      \hline
      adult (LIBSVM \cite{libsvm} version of UCI \cite{UCI} dataset) & 32561 & 123 \\
      quantum (from KDD Cup 2004 \cite{Caruana:2004:KRA:1046456.1046470}) & 50000 & 78\\
      protein (from KDD Cup 2004 \cite{Caruana:2004:KRA:1046456.1046470}) & 145751 & 74\\
    \end{tabular}
  \end{center}
\end{table}

In our logistic regression experiments, we compare the following line searches:
\begin{itemize}
\item Approximately exact: Algorithm~\ref{alg:approximately exact line search with warm start}.
\item Adaptive backtracking: Follows the same ``warm start'' initial step size as AELS and decreases it geometrically until the Armijo condition is satisfied.
\item Traditional backtracking: At every step, starts with the same initial step size $T_0$ and decreases it geometrically until the Armijo condition is satisfied.
\item Wolfe: Line search procedure for the strong Wolfe conditions from Nocedal and Wright \cite{nocedal2006line}, based on the implementation in \\{\tt scipy.optimize} \cite{scipy}. This line search uses directional derivative evaluations (implemented using the gradient) as well as function evaluations.
\item Constant: Every step uses the fixed step size $T_0$.
\item Inverse: In step $i=1,2,3,\dots$, the step size is $T_0/i$.
\end{itemize}

We plot results using performance profiles, introduced by Mor\'{e} and Wild \cite{more2009benchmarking} to compare computational efficiency in terms of function evaluations for the DFO setting. Performance profiles visualize the fraction of benchmark problems (on the vertical axis) solved within various multiples of the computational cost incurred by the fastest algorithm to achieve convergence on each problem (performance ratio, on the horizontal axis). For instance, if the curve for a given algorithm starts at 0.7 at performance ratio 1 and reaches 1 at performance ratio 6, it means that this was the computationally cheapest algorithm for 70\% of the test problems, but that on at least one test problem it incurred 6 times the cost of the cheapest algorithm for that problem. For our logistic regression experiments, since all algorithms use both function and gradient evaluations, we use computation time rather than function evaluations as our notion of computational cost.

\subsubsection{Deterministic gradient descent}
We begin our logistic regression experiments by executing each algorithm as described, using the full dataset to compute the objective function and its gradient exactly wherever required. In this setting, the upper bounds on function and gradient evaluations from Section \ref{sec:theory} apply. Our experiments in this section, shown in Figure~\ref{fig:logistic_gd}, confirm they are indeed predictive of empirical performance both between the algorithms we consider and across the range of initial step sizes $T_0$ we test for each objective function. 

\begin{figure}[h]
\centering
\includegraphics[width=0.95\textwidth]{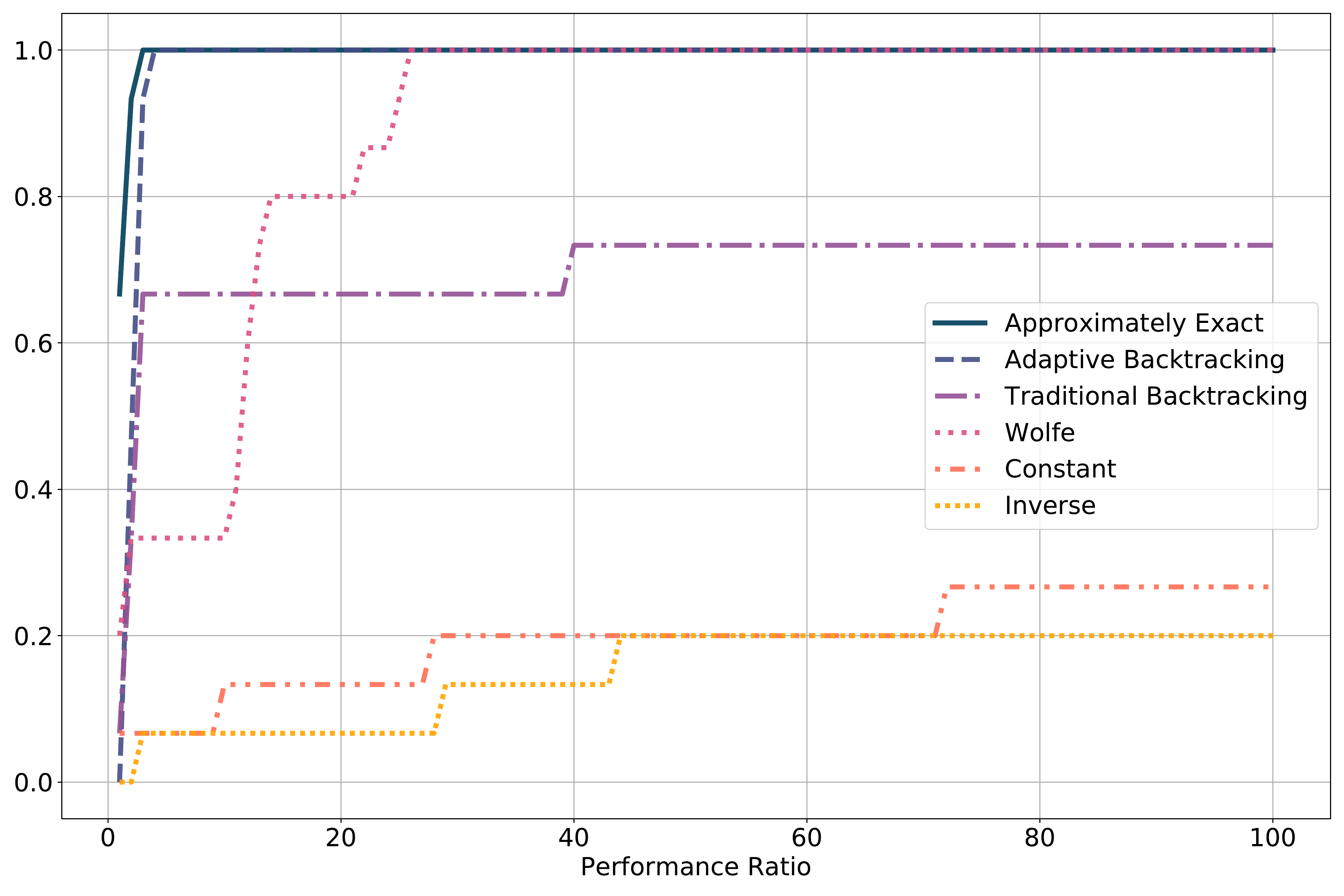}
\caption{Performance profile for gradient descent (to a relative error of $10^{-4}$) on our logistic regression benchmark problems.}
\label{fig:logistic_gd}
\end{figure}

Traditional backtracking line search performs well with a sufficiently large $T_0$, but slows down dramatically if $T_0$ is too small. Adaptive backtracking largely fixes this problem, at the cost of some overhead in the number of steps needed to reach a good step size. AELS achieves performance similar to that of traditional backtracking with a good $T_0$, without any notable dependence on $T_0$. As expected, the fixed step size schedules Constant and Inverse are sensitive to the choice of $T_0$ and for most initial step sizes they are not competitive with any of the line searches. Wolfe line search is not sensitive to $T_0$, but is slowed down by expensive gradient evaluations required during each line search.

\subsubsection{Minibatch stochastic gradient descent}
Although our analysis does not extend to the stochastic function setting, in practice with large datasets it is often desirable to work with only a small subset (minibatch) of the data at a time. In this section, we repeat the above experiment except that at each iteration we randomly sample a minibatch from the dataset and replace all required objective function and gradient values with their approximations using only the current minibatch. 

As we saw in Section \ref{sec:theory}, our proposed algorithm makes the tradeoff of slightly more expensive steps in exchange for requiring fewer steps to converge, resulting in a net speedup. In the minibatch setting, our algorithm still requires more computation per step than simpler line searches, but since we are optimizing an approximate objective in each step it is \emph{a priori} unclear whether this investment should enable any decrease in the number of steps to convergence. 

In particular, there are two (not necessarily disjoint) regimes of stochastic optimization in which we might expect a more expensive line search subroutine to run slower than a simpler subroutine: when the minibatch size is too small and when the iterates approach the optimum. 

In the first case, the approximate objective we optimize in each step can be far from the true objective, so putting more computational effort into choosing each step size might be counterproductive. We explore this issue in Figure~\ref{fig:logistic_sgd_batchsize} by repeating our logistic regression experiment with decreasing minibatch sizes, and find that AELS remains competitive across a wide range of minibatch sizes, including the extreme case of single-example minibatches. Note that it is possible to construct adversarial datasets for which single-example (or very small minibatch) AELS would oscillate rather than converge; if such behavior is observed in practice it can be remedied by increasing the minibatch size, adjusting the search directions to incorporate history from prior minibatches, or adding a trust region. 

We also find that, although Wolfe line search outperforms both fixed step size schedules and traditional backtracking line search in the full batch regime, its relative performance degrades markedly as the minibatch size decreases and the gradient noise correspondingly increases. We hypothesize that this behavior results from the difference in gradient dependence of the different line searches: Wolfe line search requires multiple gradient evaluations inside the line search, backtracking line searches use only the gradient at the initial point of the line search, and AELS and fixed step size schedules require zero gradient evaluations. During minibatch SGD the gradient noise is typically larger than the function noise, so it is not surprising that line searches based on function evaluations are more robust in this setting.

The second scenario (approaching the minimum) is similar, except that for functions with wide, flat minima we run the risk of taking a large counterproductive step even with a minibatch size that might be large enough to make progress farther from the minimum. We explore this issue in Figure~\ref{fig:logistic_sgd_epsilon}, in which we hold the minibatch size constant (at 100) and vary the relative accuracy required for convergence, forcing the iterates to approach the minimizer. We find that AELS remains fastest until the required relative error drops to $10^{-6}$, at which point it performs similarly to adaptive backtracking line search. As in the case of decreasing minibatch size, as we approach the minimizer we find that AELS and adaptive backtracking line search remain competitive, whereas Wolfe line search ultimately becomes slower than even the fixed step size schedules. Although performance may vary across objective functions, this experiment gives a sense of how close each method can get to the minimum before the line searches are no longer helpful.

\begin{figure}[H]
\centering
\begin{subfigure}{0.49\textwidth}
        \includegraphics[width=\textwidth]{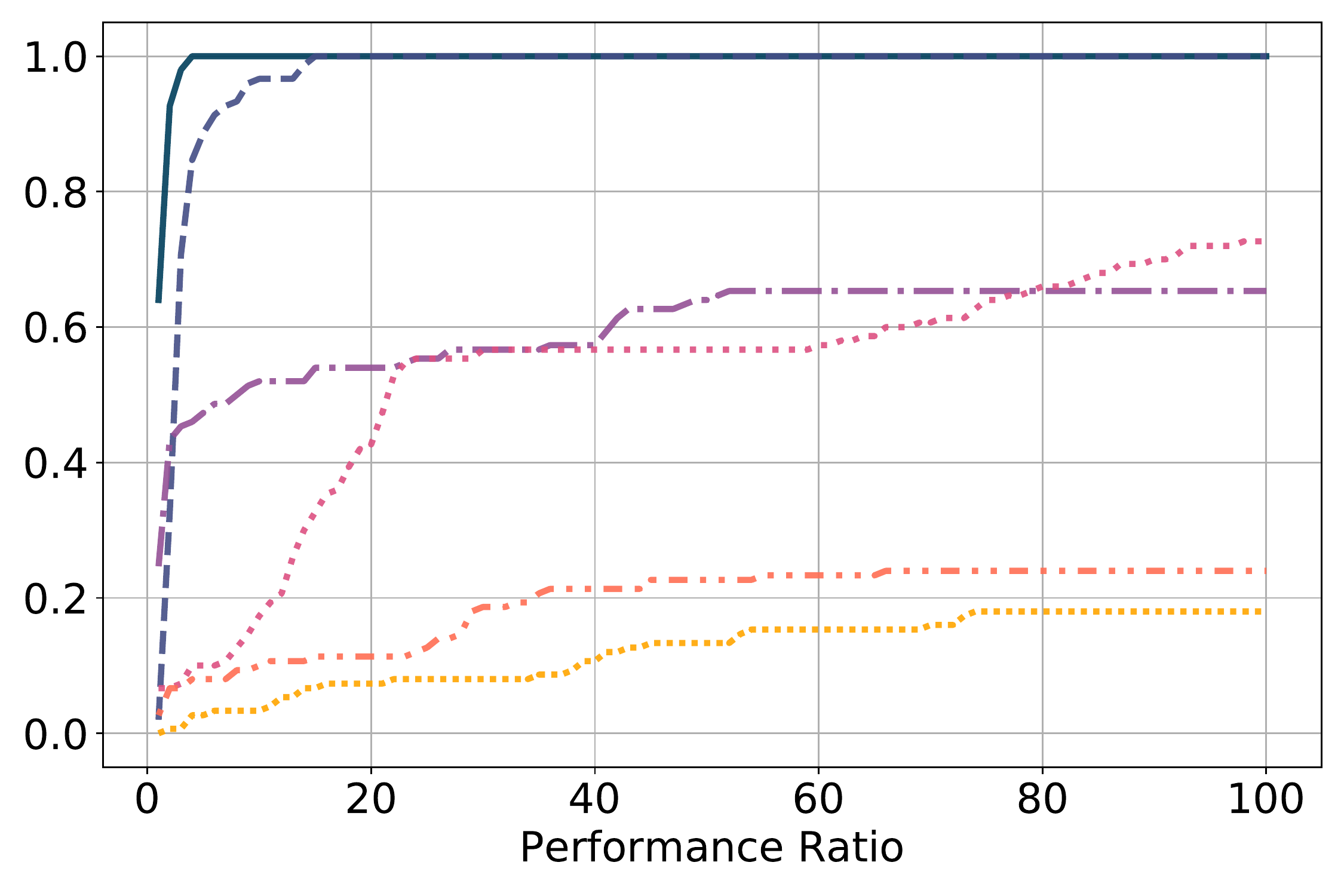}
        \caption{Minibatch size: 1600}
        \label{fig:1600}
    \end{subfigure}
    \hfill
    \begin{subfigure}{0.49\textwidth}
        \includegraphics[width=\textwidth]{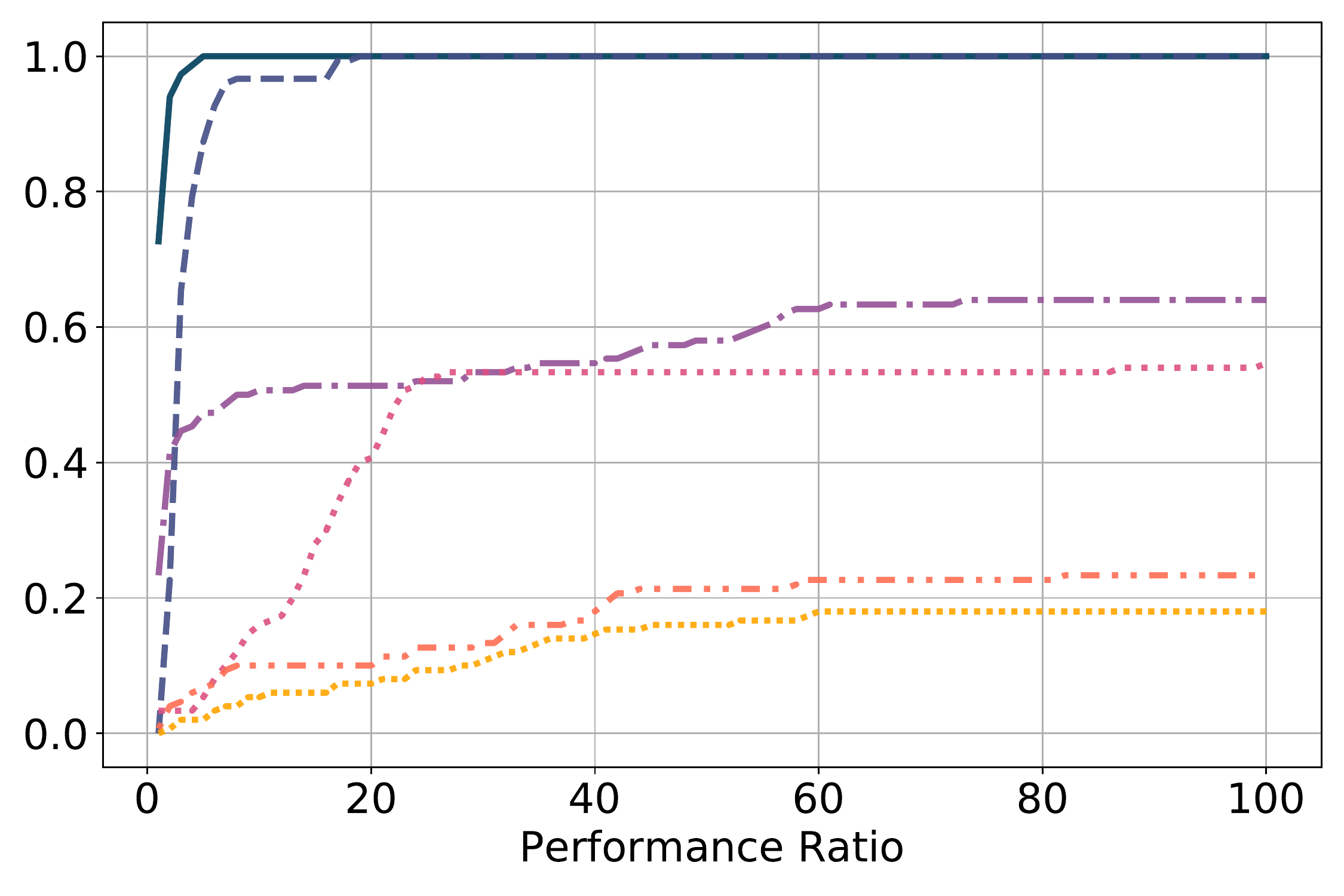}
        \caption{Minibatch size: 400}
        \label{fig:400}
    \end{subfigure}
    
    \begin{subfigure}{0.49\textwidth}
        \includegraphics[width=\textwidth]{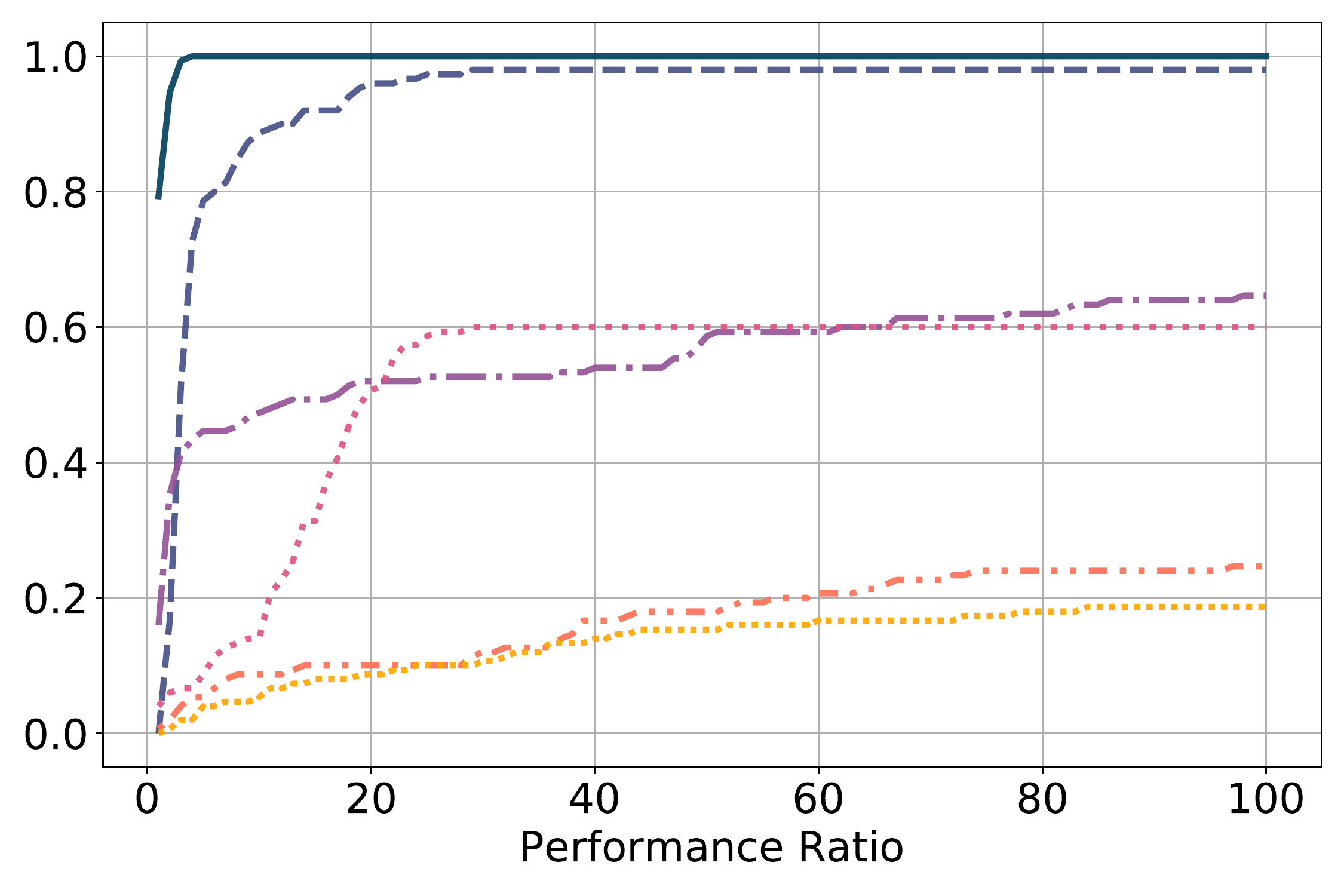}
        \caption{Minibatch size: 100}
        \label{fig:100}
    \end{subfigure}
    \hfill
    \begin{subfigure}{0.49\textwidth}
        \includegraphics[width=\textwidth]{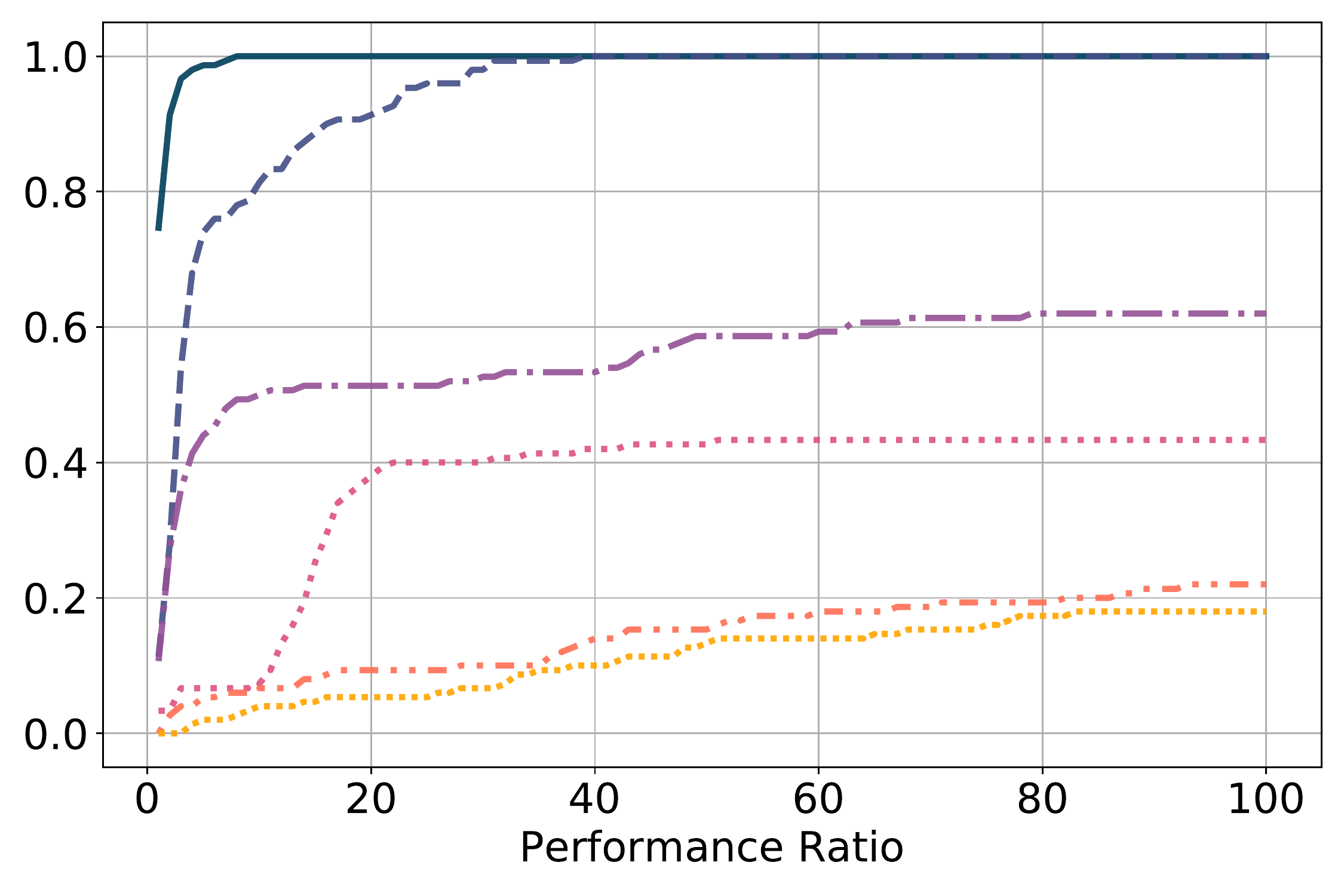}
        \caption{Minibatch size: 25}
        \label{fig:25}
    \end{subfigure}
    
    \begin{subfigure}{0.49\textwidth}
        \includegraphics[width=\textwidth]{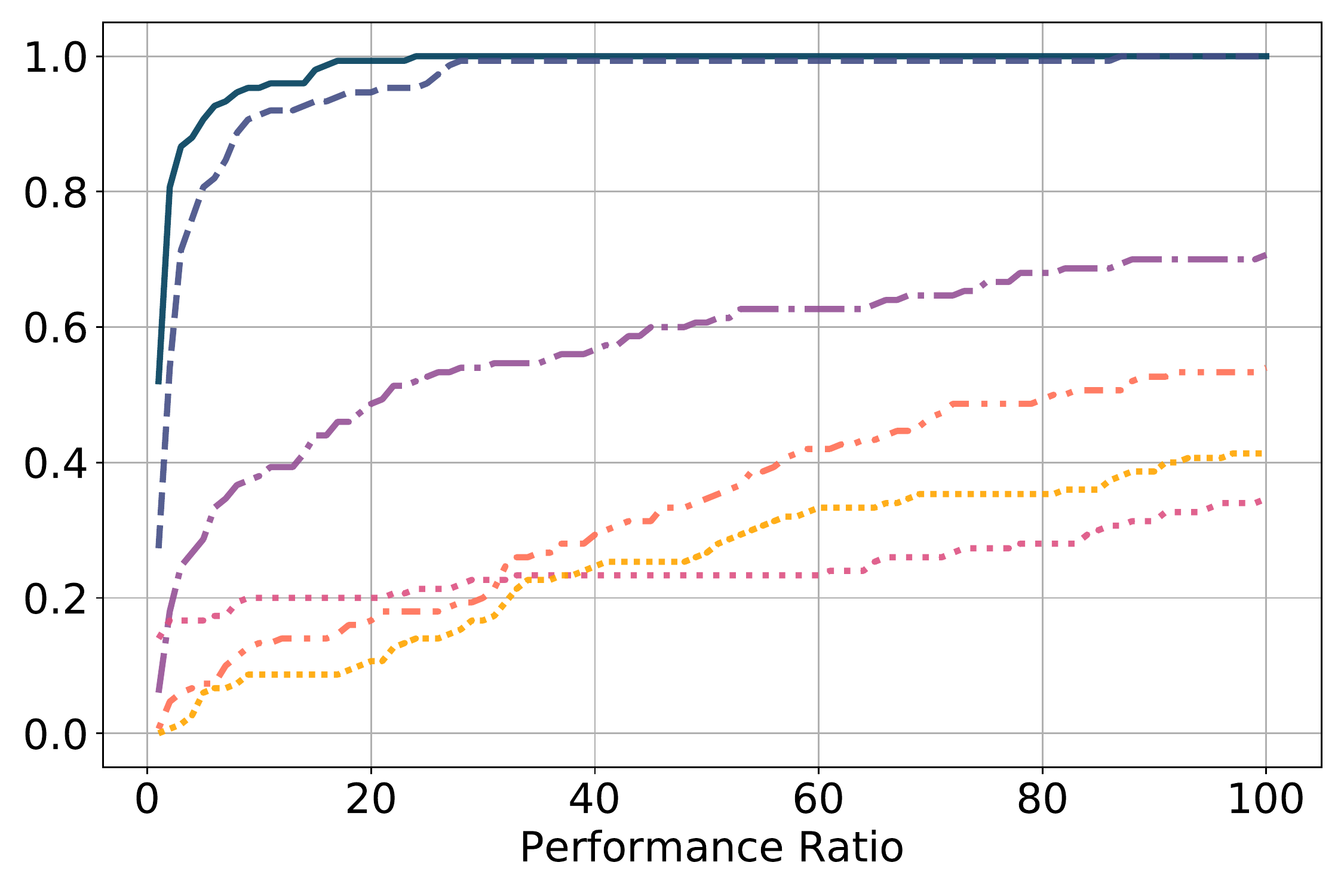}
        \caption{Minibatch size: 6}
        \label{fig:6}
    \end{subfigure}
    \hfill
    \begin{subfigure}{0.49\textwidth}
        \includegraphics[width=\textwidth]{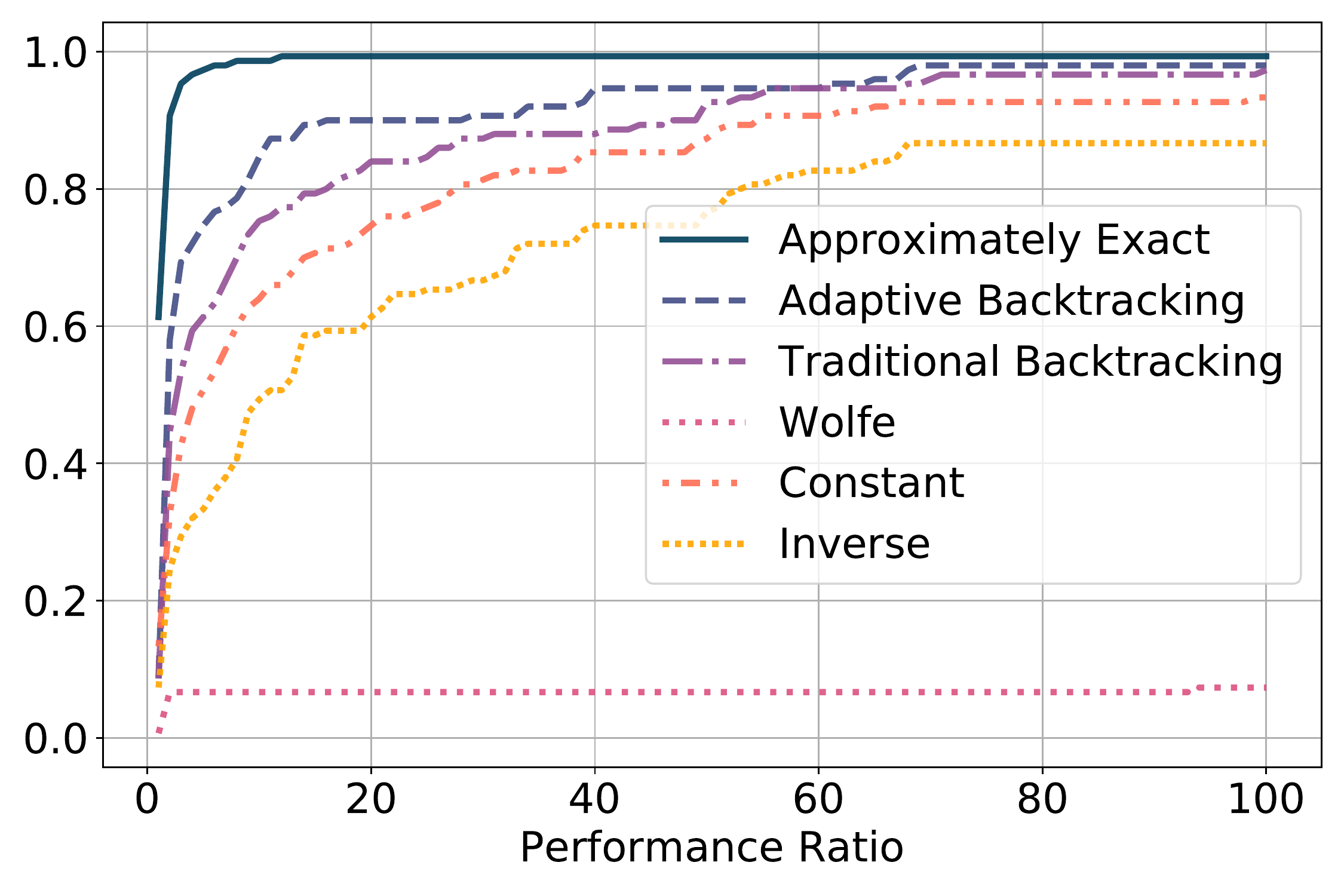}
        \caption{Minibatch size: 1}
        \label{fig:1}
    \end{subfigure}
\caption{Performance profiles for stochastic gradient descent (to a relative error of $10^{-4}$) on our logistic regression benchmark problems, repeated with varying minibatch sizes and over 10 random seeds (using the same set of seeds for each algorithm). The performance improvement of AELS in the full batch regime (Figure~\ref{fig:logistic_gd}) transfers to the minibatch regime across all minibatch sizes tested.}
\label{fig:logistic_sgd_batchsize}
\end{figure}

These experiments suggest that even our relatively expensive line search method can improve performance over a wide range of minibatch sizes and required accuracies, likely including many practical scenarios. Additionally, the line search method we propose could be easily combined with an adaptive batch size or other variance reduction or trust region method, as needed for specific problems.

\begin{figure}[t]
\centering
\begin{subfigure}[t]{0.49\textwidth}
        \includegraphics[width=\textwidth]{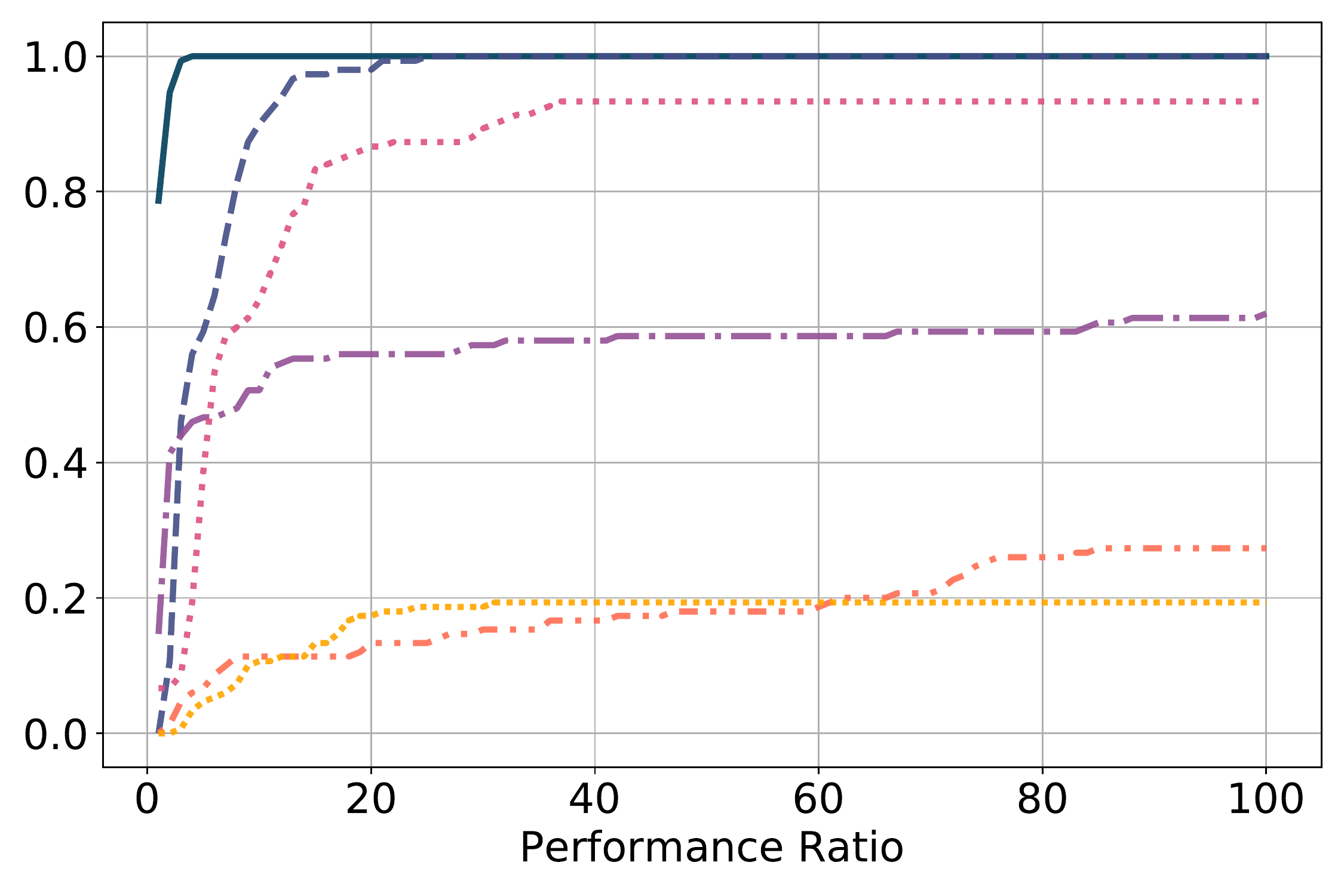}
        \caption{Relative error: $10^{-3}$}
        \label{fig:1e-3}
        \end{subfigure}
        \hfill
        \begin{subfigure}[t]{0.49\textwidth}
        \includegraphics[width=\textwidth]{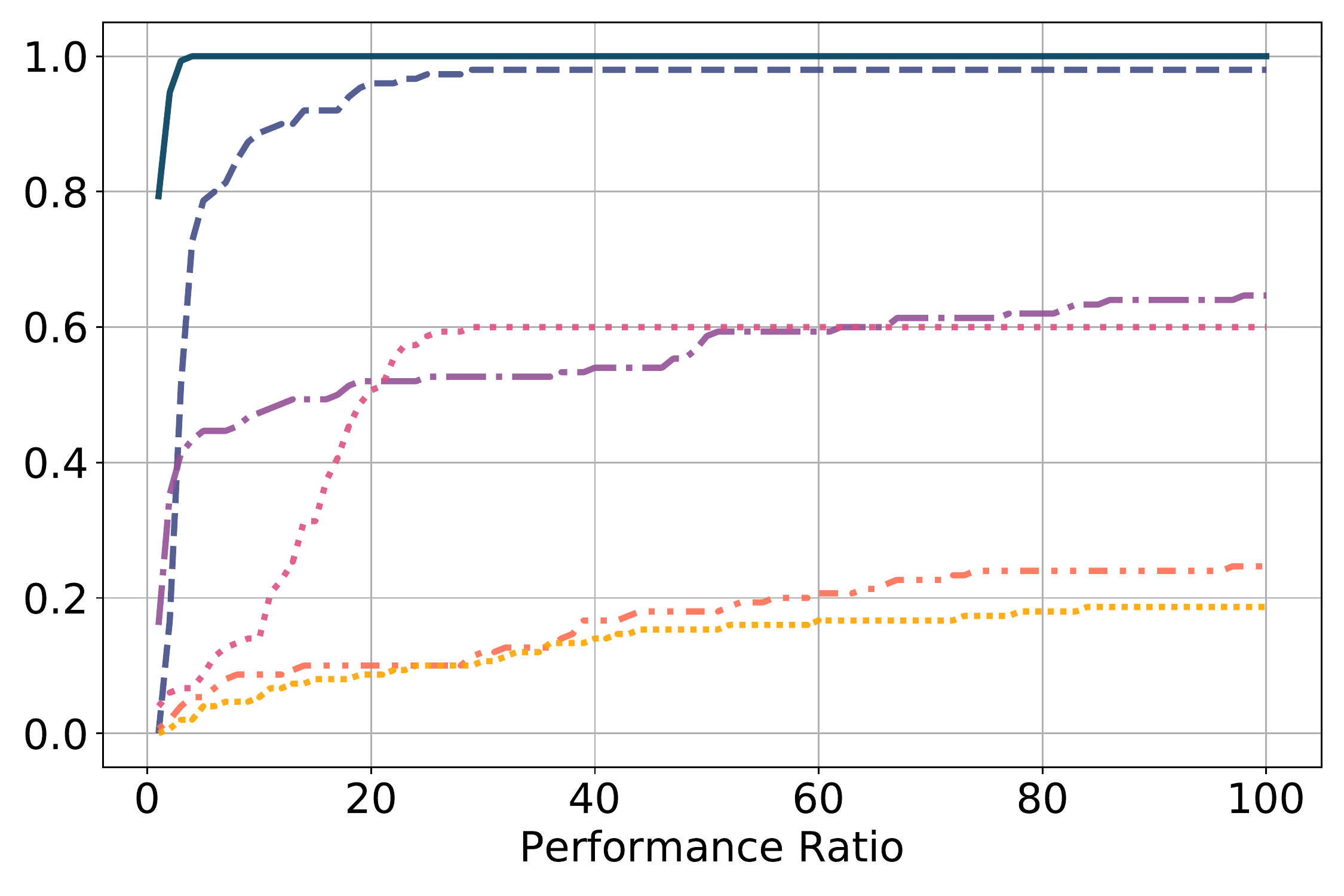}
        \caption{Relative error: $10^{-4}$}
        \label{fig:1e-4}
        \end{subfigure}
        
        \begin{subfigure}[t]{0.49\textwidth}
        \includegraphics[width=\textwidth]{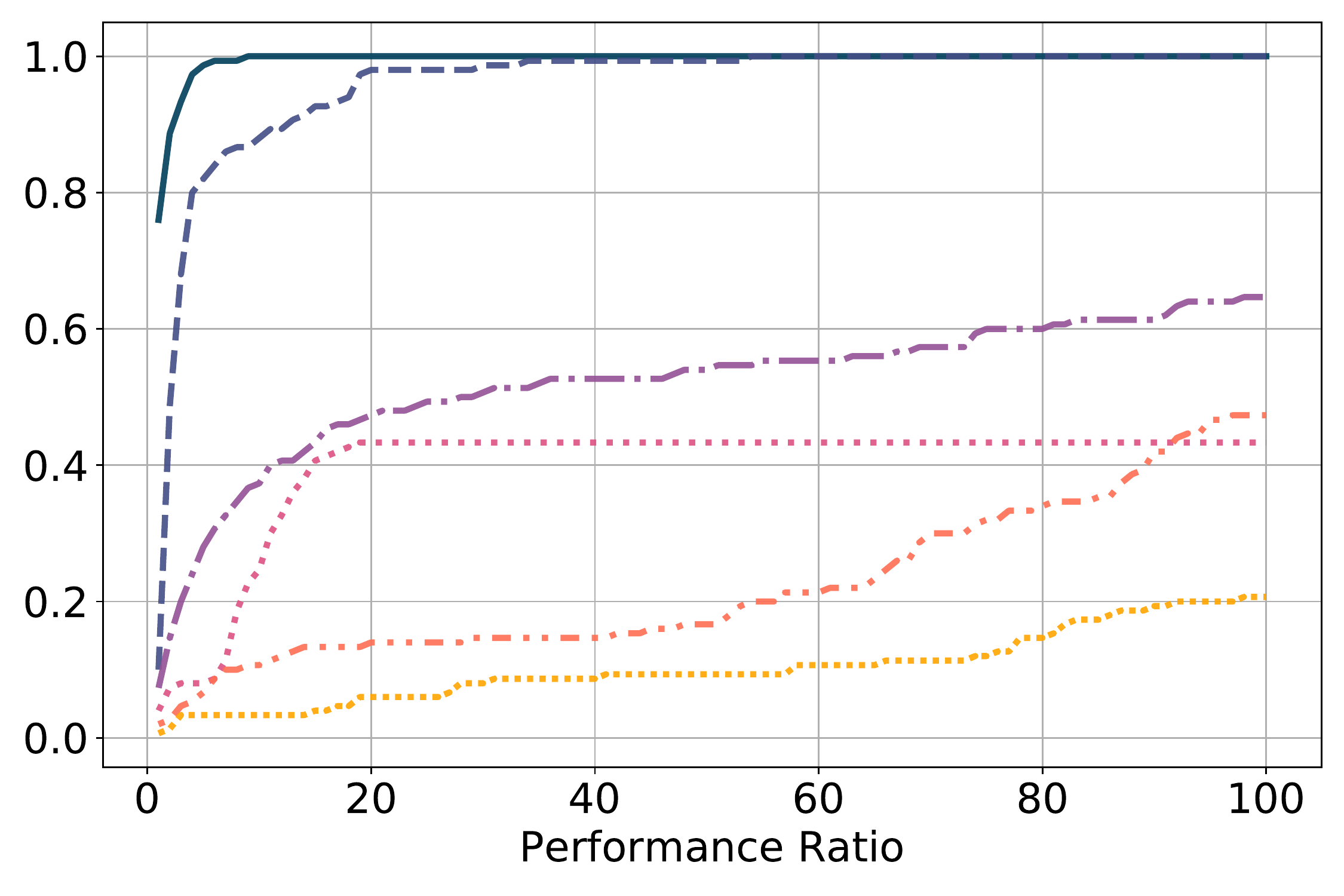}
        \caption{Relative error: $10^{-5}$}
        \label{fig:1e-5}
        \end{subfigure}
        \hfill
        \begin{subfigure}[t]{0.49\textwidth}
        \includegraphics[width=\textwidth]{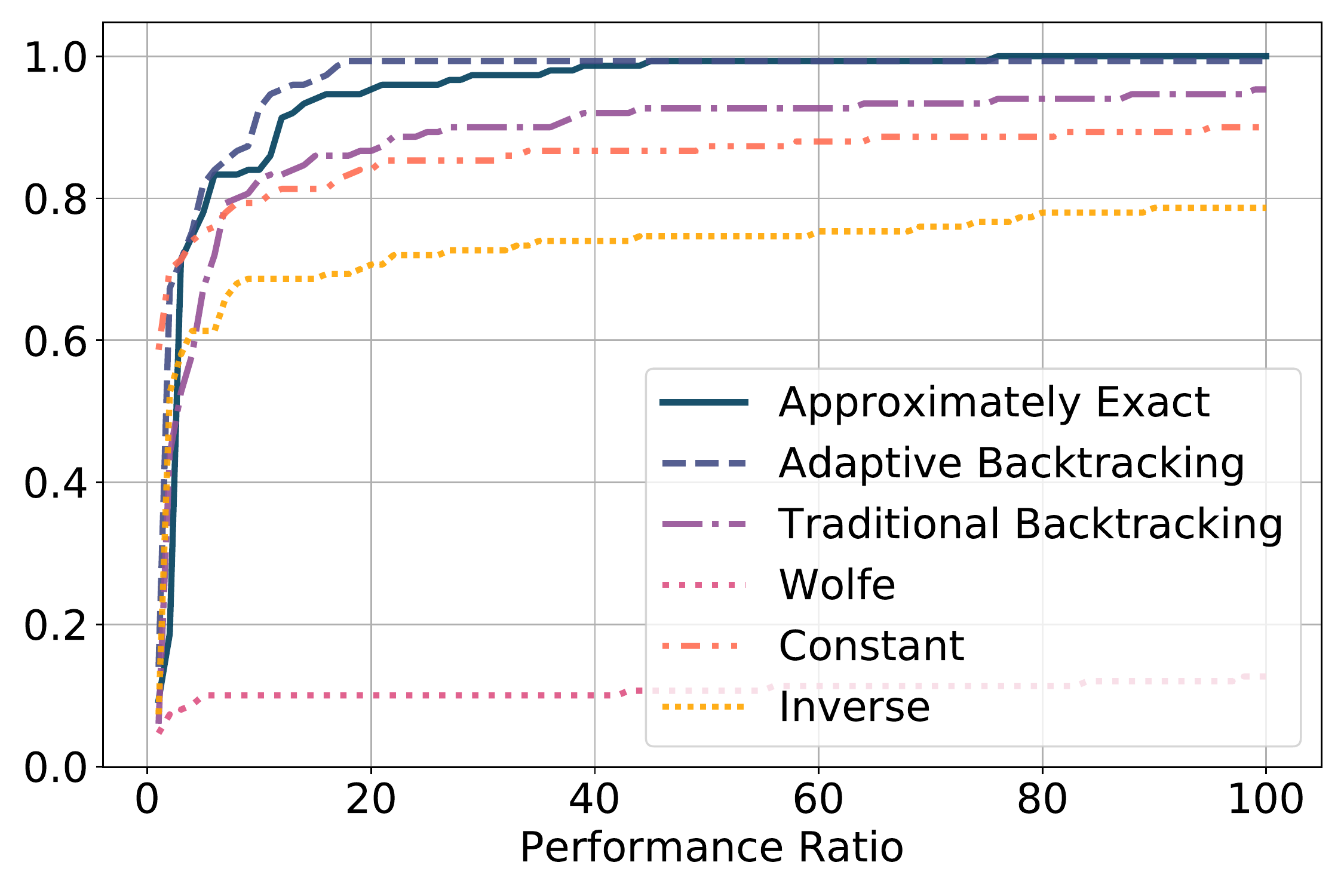}
        \caption{Relative error: $10^{-6}$}
        \label{fig:1e-6}
        \end{subfigure}
\caption{Performance profiles for stochastic gradient descent (with minibatches of size $100$) on our logistic regression benchmark problems, repeated with varying required accuracy and over 10 random seeds (using the same set of seeds for each algorithm). The performance improvement of AELS in the minibatch regime (Figure~\ref{fig:logistic_sgd_batchsize}) transfers to the various accuracy regimes tested, with performance degrading to roughly the same as adaptive backtracking in the high accuracy regime (relative error $10^{-6}$).}
\label{fig:logistic_sgd_epsilon}
\end{figure}

\subsection{Derivative-free optimization}
Our derivative-free optimization (DFO) experiments use the 53 benchmark problems proposed by Mor{\'{e}} and Wild \cite{more2009benchmarking}, all with an initial step size $T_0=1$. These problems are low to medium dimensional (no more than 12 dimensions), smooth (although accessed only through function evaluations), generally nonconvex (although often have unique minima), and include standard starting points sometimes near and sometimes far relative to a minimizer. We use BFGS \cite{bfgs} descent directions for all of our line search methods, because many of the DFO objectives are poorly conditioned and pure steepest descent is needlessly slow (\ie outperformed by Nelder-Mead). Note that the BFGS update step is only guaranteed to be well-defined if the step size satisfies the Wolfe conditions. We account for this by explicitly checking if the estimated BFGS direction is a descent direction; otherwise, we use the negative gradient instead and reset our BFGS history.

Our finite differencing uses the same sampling radius ($\sigma \approx 1.5 \times 10^{-8}$) as recommended by Nocedal and Wright \cite{nocedal2006line} and used in {\tt scipy.optimize} \cite{scipy}. We experimented with a heuristic for adapting $\sigma$, but found a fixed $\sigma$ worked better on these problems. We also experimented with central finite differencing, but found forward finite differencing to be a bit faster, offering a favorable tradeoff of fewer function evaluations for minimal lost accuracy. We use the same forward finite differencing approximation in place of the gradient in both gradient descent and BFGS \cite{bfgs}. For the directional derivatives required by the Wolfe line search, we use a two-point forward finite differencing estimate that is more efficient than approximating the full gradient (currently {\tt scipy.optimize} estimates the full gradient, but we have created a pull request to enable this faster option).

We compare the following methods:
\begin{itemize}
\item BFGS with approximately exact line search (Algorithm~\ref{alg:approximately exact line search with warm start}).
\item BFGS with adaptive backtracking line search.
\item BFGS with traditional backtracking line search.
\item BFGS with a Wolfe line search \cite{wolfe1969convergence}; our implementation is based on {\tt scipy.optimize} \cite{scipy} but uses a cheaper two-point directional derivative approximation that avoids estimating the full gradient.
\item Nelder-Mead \cite{nelder1965simplex}, as implemented in {\tt scipy.optimize} \cite{scipy}.
\end{itemize}

\begin{figure}[h]
\centering
\includegraphics[width=0.9\textwidth]{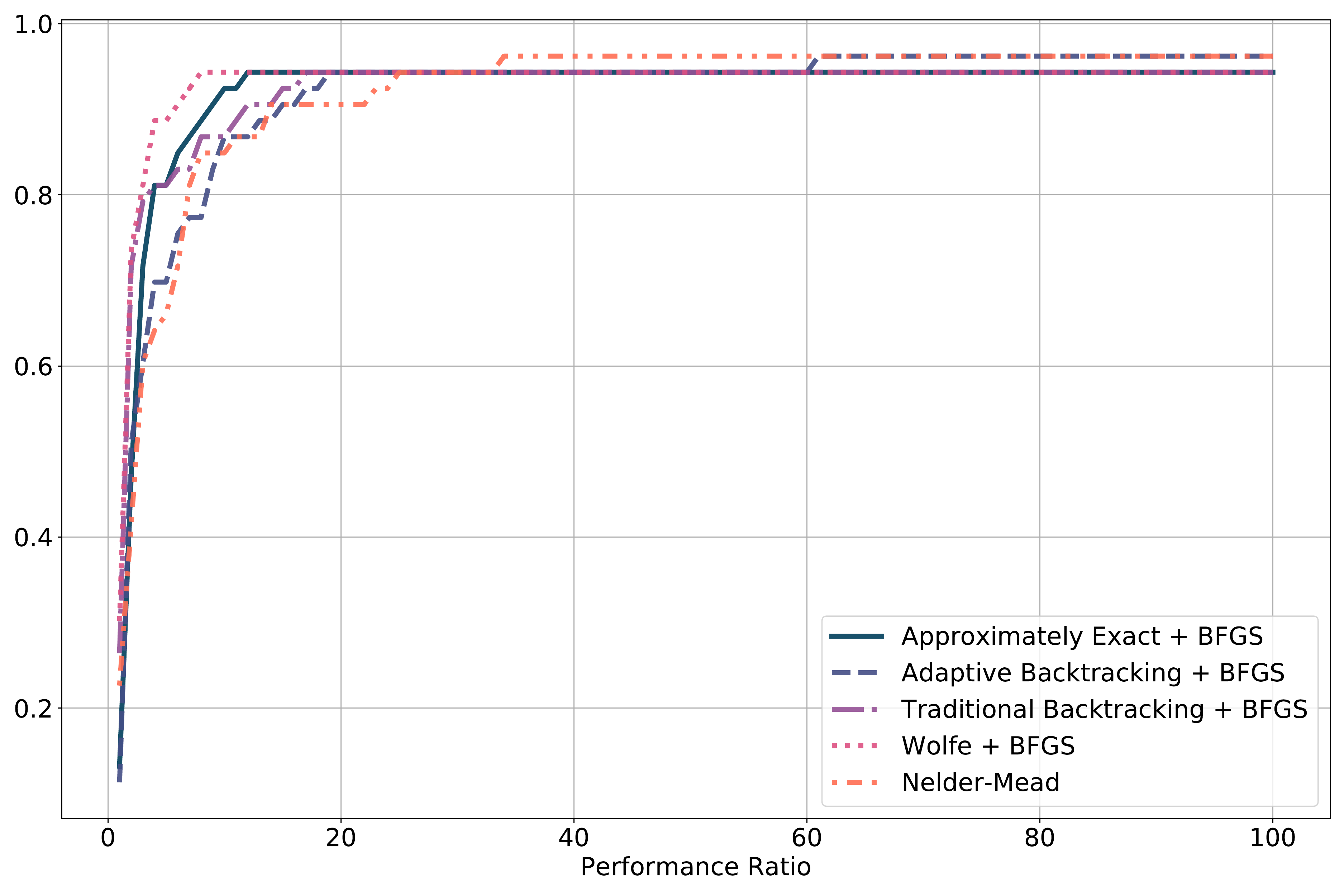}
\caption{Performance profile on the DFO benchmark of Mor{\'{e}} and Wild \cite{more2009benchmarking}.}
\label{fig:dfo}
\end{figure}

Figure~\ref{fig:dfo} shows a performance profile comparing these methods. We find that even on this challenging derivative-free benchmark with finite differencing gradient estimates, all of these line search methods with BFGS search directions are competitive with Nelder-Mead, an explicitly derivative-free algorithm. Of these, Wolfe line search is slightly faster than AELS, which is in turn slightly faster than the Armijo backtracking methods. This similar performance between Wolfe and AELS is in line with the upper bounds on iterations and function evaluations in Section~\ref{sec:theory}.

\section{Conclusions}
\label{sec:conclusions}

We introduce a simple line search method, approximately exact line search (AELS, Algorithm~\ref{alg:approximately exact line search with warm start}), that uses only function evaluations to find a step size within a constant fraction of the exact line search minimizer. Our algorithm can be thought of as an adaptive version of golden section search \cite{avriel1968golden}. We prove linear convergence of our method applied to gradient descent as well as derivative-free optimization with either a finite differencing gradient approximation or a random search direction (in which case we prove linear expected convergence). We also bound the number of function evaluations per iteration as logarithmic in properties of the objective function and gradient approximation. We compare these bounds to those for Armijo line searches such as the popular backtracking method, and find that our method is more robust to improper choices of the initial step size $T_0$ as well as better suited to the derivative-free setting, in which Armijo methods depend more strongly on the quality of the gradient approximation. We also prove parallel bounds on the number of iterations and function evaluations for a strong Wolfe line search, a special case of that presented in Nocedal and Wright \cite{nocedal2006line}, and find them to be quite similar to our bounds for AELS, although Wolfe line search makes use of directional derivative evaluations in addition to function evaluations.

We illustrate these worst-case bounds using gradient descent on a strongly convex logistic regression benchmark, on which we find that our method converges faster than Wolfe line search, Armijo backtracking methods, and simple fixed step size schedules across a range of initial step sizes $T_0$. We also stress-test our approach in the stochastic setting by repeating our logistic regression benchmark trials with decreasing minibatch size and decreasing relative error at convergence. Although line search is commonly believed to be useless or counterproductive in the stochastic setting, in which our analysis does not directly apply, we find that AELS remains performant on logistic regression even with single-example minibatches or when optimizing to a relative error less than $10^{-5}$. These regimes are of great practical interest, and our experiments suggest that AELS may be effective well beyond the settings we analyzed. We believe this robustness to stochasticity is largely due to the derivative-free nature of AELS, since gradients often experience more substantial noise than function evaluations.

Finally, we present empirical results of our method using BFGS search directions on a DFO benchmark (including nonconvex but smooth objectives), and show faster or at least competitive convergence compared to popular baselines including Nelder-Mead and BFGS with a Wolfe line search. These experiments suggest that, although our convergence analysis focused on strongly convex functions, with proper implementation and search directions (as provided in our code) our method extends gracefully to nonconvex objectives. This result is encouraging but unsurprising, since our line search method itself relies only on unimodality rather than convexity.

In total, our investigation indicates that line search, particularly AELS, improves performance in many regimes, including ones in which line search is typically avoided. We hope that our work spurs further theoretical research in characterizing the performance of approximately exact line search and other line searches in the stochastic and nonconvex settings, as well as further empirical research exploring the range of practical optimization problems for which these line searches improve performance.

\backmatter

\bmhead{Acknowledgments}

We appreciate helpful feedback from Ludwig Schmidt.
This research is generously supported in part by ONR awards N00014-20-1-2497 and N00014-18-1-2833, NSF CPS award 1931853, and the DARPA Assured Autonomy program (FA8750- 18-C-0101). 
SFK is also supported by NSF GRFP.

\bibliography{references}% common bib file  
%% if required, the content of .bbl file can be included here once bbl is generated
%\input line_search_sn.bbl

%% Default %%
%%\input sn-sample-bib.tex%

\end{document}